\newcommand{\boldrho}{{\pmb\rho}}
\newcommand{\boldalpha}{{\alpha}}
\newcommand{\boldgamma}{{\gamma}}
\newcommand{\boldpsi}{\pmb{\psi}}
\newcommand{\boldphi}{\pmb{\phi}}
\definecolor{laura}{rgb}{0.5, 0, 0.7}
\def\oversortoftilde#1{\mathop{\vbox{\m@th\ialign{##\crcr\noalign{\kern3\p@}%
      \sortoftildefill\crcr\noalign{\kern3\p@\nointerlineskip}%
      $\hfil\displaystyle{#1}\hfil$\crcr}}}\limits}
\def\sortoftildefill{$\m@th \setbox\z@\hbox{$\braceld$}%
  \braceld\leaders\vrule \@height\ht\z@ \@depth\z@\hfill\braceru$}
\def \calC {\mathcal{C}}
\def \calG {\mathcal{G}}
\def \calH {\mathcal{H}}
\def \calO {\mathcal{O}}
\def \calR {\mathcal{R}}
\def \calU {\mathcal{U}}
\def \calV {\mathcal{V}}
\def \calW {\mathcal{W}}
\def \CU   {\calC(\mathfrak{U})} 
\def \GU   {\calG(\mathfrak{U})}
\def \RU   {\calR_{\mathfrak{U}}}
\def \tU   {\widetilde{U}}
\def \tV   {\widetilde{V}}
\def \tW   {\widetilde{W}}
\def \tlambda {\widetilde{\lambda}}
\def \tmu     {\widetilde{\mu}}
\def \trho    {\widetilde{\rho}}
\def \tkappa  {\widetilde{\kappa}}
\def \ttheta  {\widetilde{\theta}}
\def \tnu     {\widetilde{\nu}}
\def \tdelta  {\widetilde{\delta}}
\def \txi     {\widetilde{\xi}}
\def \tgamma  {\widetilde{\gamma}}
\def \Gbmap {\tau}
\def \Hbmap {\varepsilon}
\def \red   {\,\operatorname{red}}
\def \Aut   {\operatorname{Aut}}
\def \bbZ   {\mathbb Z}
\def \CONCR {\mathsf{Con}} \def \ABST {\mathsf{Abst}}
 \def \Groupprof {\mathsf{GroupMod}}
\newcommand {\unif} [1] {(\tU_{#1},G_{#1},\rho_{#1},\pi_{#1})}
\newcommand{\slashing}[2]{#1}   
\newcommand{\pullback}[2]{\mathbin{{}_{#1}\kern-\scriptspace{\times}_{#2}}}
\numberwithin{equation}{section}
\theoremstyle{plain}
\newtheorem{theorem}{Theorem}[section]
\newtheorem{thm}[theorem]{Theorem}
\newtheorem{prop}[theorem]{Proposition}
\newtheorem{lem}[theorem]{Lemma}   
\newtheorem{lma}[theorem]{Lemma}   
\newtheorem{cor}[theorem]{Corollary}
\theoremstyle{definition}
\newtheorem{defin}[theorem]{Definition}
\newtheorem{dfn}[theorem]{Definition}
\newtheorem{rmk}[theorem]{Remark}
\newtheorem{rmks}[theorem]{Remarks}
\newtheorem{exa}[theorem]{Example}
\newtheorem{Notation}[theorem]{Notation}
\begin{document}

\title[Atlases for Ineffective Orbifolds]
{Atlases for Ineffective Orbifolds}

\author{D.~Pronk, L.~Scull, M.~Tommasini}

\address{Department of Mathematics and Statistics,
Chase Building,
Dalhousie University, 
Halifax, NS, B3H 3J5, Canada}
\email{pronk@mathstat.dal.ca}

\address{Department of Mathematics, Fort Lewis College,  1000 Rim Drive, 
Durango, Colorado 81301-3999    
USA}
\email{scull\_l@fortlewis.edu}

\address{(last affiliation) Mathematics Research Unity, University of Luxembourg, Luxembourg}
\email{matteo.tommasini2@gmail.com}

\date{\today}
\subjclass[2010]{}
\keywords{orbifolds, atlases, bimodules, groupoids}

\thanks{The first author thanks NSERC for its financial support and Fort Lewis College for
its hospitality; the third author thanks FNR Luxembourg for the grant 4773242.}

\begin{abstract}
We give a definition of atlases for ineffective orbifolds, and prove that this definition
leads to the same notion of orbifold as that defined via topological groupoids.
\end{abstract}

\maketitle

\tableofcontents

\section{Introduction}

This paper considers general, not necessarily effective, orbifolds; we will refer to these as
ineffective.  Ineffective orbifolds have been studied from several perspectives, and multiple
definitions have been proposed.  If we define orbifolds using topological groupoids, it is
straightforward to generalize to ineffective orbifolds, and this is the approach that 
has often been used in the literature (for example,~\cite{ALR,CR1,CR2,FO}).  On the other hand,
it is nice to also have a traditional atlas definition, and here the generalization to the ineffective
case  becomes less obvious.  Although it is easy to define an ineffective orbifold chart, figuring
out what the chart embeddings are and how to build an atlas out of them is less obvious.  As it
stands, the ineffective atlas definitions currently used in the literature do not match the objects
described using topological groupoids; this has been observed in~\cite{HM}.  

In this paper, we give a new definition of orbifold atlas
for ineffective orbifolds, generalizing the definition of orbifold atlas in the effective case,
and show that it does match  the topological groupoid approach.  Our definition will not change
the existing notion of chart for ineffective orbifolds, but will change the structure of
the embeddings between charts that create the atlas.  

The definition we develop is considerably longer and less straighforward than the one for ineffective orbifold groupoids.  We believe that  it is still interesting to have and that it sheds new light on the structure of ineffective orbifolds.    In addition, although writing out the definition in full generality is quite long, applying it in particular examples becomes more tractable, as we will show in Example \ref{E:z3p2}.

We see two avenues for immediate applications.  The first is in considering notions of suborbifolds and embeddings.     In the effective case, this has been studied from both the groupoid and atlas perspective.  The original definitions  were given via local charts,  and it turns into a surprisingly delicate question to get the definitions correct even on this level.  The original definition excludes natural examples that one would want to be included, and therefore has been refined and expanded;   see  for example \cite{BB}, who use the atlas and chart approach to develop examples and discuss the subtleties and various notions used here.   These ideas have also  been studied from a groupoid approach.  Once again, the natural notion of suborbifold is very restrictive and excludes obvious examples, such as the diagonal of a product orbifold.   Thus, again the idea has been expanded.  For example,   \cite{CHS} develops a  groupoid approach to orbifold embeddings that encompass some of the desired examples, but only manages to fully develop the theory for translation groupoids.   Given this, we believe that having a fully developed notion of atlases for ineffective orbifolds is a  necessary first step at considering these questions in the ineffective setting. 

A second avenue of investigation comes from the idea of developing an orbifold category.  This idea is inspired by the work of 
Grandis~\cite{G} (and further developed by Cockett et al.~\cite{CCG}), to generalize the idea of manifolds.  Based on local smooth structures pieced together via join restriction categories, it is possible to define manifold objects in a variety of categories.      We are interested in extending this to the idea of generalized orbifold categories, based on locally defined notions of chart and atlas.  

\subsection{Organization} The paper is structured as follows.  We start by giving background on the various currently existing definitions
of orbifolds, and explaining the issues surrounding the existing definitions in Section~\ref{sec-2}.
In Section~\ref{sec-3}, we give some background on the language of group bimodules, which we
use in defining our atlases.   We give the new orbifold atlas definition in Section~\ref{S:defn}.
The remainder of the paper is devoted to proving that this definition does match the orbifolds
defined via topological groupoids:  in Section~\ref{sec-5} we show that we can construct an
orbifold groupoid (that is, one which is  \'etale proper and Lie)
from one of our atlases, and in Section~\ref{S:atlastogroupoid} we show that we can construct one of our
orbifold atlases from a groupoid which defines an orbifold.  In Section~\ref{sec-7} we prove that
equivalent orbifold atlases correspond to Morita equivalent groupoids, completing the
correspondence between our atlas definition and the groupoid approach.  

\subsection{Acknowledgements}  We would like to thank the referee for very helpful suggestions that allowed us to considerably streamline and clarify the arguments in Sections 6 and 7.

\section{Motivation:  Orbifolds via Atlases and via Groupoids}\label{sec-2}
\subsection{Effective Orbifolds}
Orbifolds are spaces which are locally modeled by quotients of finite groups acting smoothly on
open subsets of $\mathbb{R}^n$.  Conceived as generalizations of manifolds, they were originally
described in the same way that manifolds are, using  the language of charts and atlases. 
 
In Satake's original definition~\cite{Sa1}, an orbifold chart (or uniformizing system) 
for an open subset $U$ of a topological space $X$ consists of  $\tU$, a connected open subset of
$\mathbb{R}^n$, with a finite group $G$ acting \emph{effectively} on $\tU$, such that the
quotient space $\tU/G$ is homeomorphic to $U$ via the projection $\pi$.   
An embedding of charts $(\tU_1,G_1,\pi_1)\hookrightarrow(\tU_2,G_2,\pi_2)$ is defined by a
smooth embedding $\lambda:  \tU_1 \to \tU_2$ such that $\pi_1 = \pi_2 \circ \lambda$, so that it
induces a continuous embedding on the quotient spaces.   An atlas for a space then consists of a
collection of charts such that the quotients cover the underlying space, with all chart embeddings
between them, and suitable compatibility conditions (see Section \ref{S:efforb} for full definition).  We will refer to such a structure as a Satake atlas.

These atlases have several nice properties that have proved very useful when working with effective
orbifolds.  First, given an open subset $\tU$ of $\mathbb{R}^n$ and an effective action of a finite
group $G_U$ on $\tU$, we can think of this as an orbifold chart for $U = \tU/G_U$.   Then the
collection of chart embeddings from $\tU$ into itself has the structure of the group $G_U$: each
embedding $\lambda$  is of the form $x\mapsto g\cdot x$ for a unique $g\in G_U$, and each group
element gives a chart embedding, such that the composition of chart embeddings corresponds to
the multiplication in the group $G_U$.  Furthermore, since for each chart embedding $\lambda$ and
each group element $g\in G_U$ the map $\lambda g$ is a chart embedding as well, each embedding
gives rise to a unique group homomorphism $\phi: G_U \to G_U$ given by conjugation by the element
$g$, such that  the embedding $\lambda$ is equivariant with respect to $\phi$ in the sense that
$\lambda(g\cdot x) = \phi(g)\cdot\lambda (x)$.  These results about the interaction of the
embeddings with the structure groups were first given in~\cite{Sa2} for actions with fixed sets
of codimension at least 2 and proved in full generality in~\cite[Section 1 and the appendix]{MP}.

More generally, these references show that if we have any chart embedding $\lambda:\tU_1 \to
\tU_2$, we can realize this as an equivariant map:  there is a unique induced homomorphism $\phi:
G_1 \to G_2$ such that $\lambda(g\cdot x) = \phi(g)\cdot\lambda (x)$ for all $x\in \tU_1$.

Given any open connected subset $V$ of a chart $U$ with structure group $G_U$, it is always possible
to endow $V$ with an induced orbifold chart structure.  To do this, we take  a connected component
$\tV$ of the inverse image of $V$ in the covering space $\tU$, and consider the action of the
subgroup $G_V$ of $G_U$  that leaves this subset  $\tV$ invariant (but not necessarily pointwise
fixed).   Using this fact, a slight variation on the definition of orbifold atlas was introduced
by~\cite{FO} where an atlas is given as a collection of orbifold charts which covers the space
$X$ and is \emph{locally compatible} in the sense that all charts in the collection containing a
given point $x$ induce the same germ of a chart at that point.  

Both these definitions of atlases are rather cumbersome to work with, particularly when it comes
to defining maps between orbifolds with desirable properties;  see~\cite{Po} for instance.  
Therefore, an alternate description of the orbifold category has been developed
using topological and differentiable groupoids.  A topological groupoid has a space of objects and
a space of arrows identifying certain objects;  the quotient of the object space by these
identifications gives the underlying quotient space of the orbifold, and the identifications
are used to encode the local singularity structure.  To give a smooth structure we require
our orbifold groupoids to be Lie groupoids (with smooth structures and maps).  To keep the
singularities of the type desired for orbifolds, that is, those given by quotients of smooth
finite group actions, we place restrictions on the groupoid and ask that the diagonal 
be a proper map and that the groupoid be \'etale (i.e., such that the source and target maps 
from the space of arrows to the space of objects are local diffeomorphisms).    When looking at
effective orbifolds, we also require the groupoids to be effective.  Two such groupoids represent
equivalent orbifolds precisely when they are Morita equivalent~\cite{MP}.
  
The groupoid approach and the original atlas idea lead to the same notion of {\em effective} orbifold.  
If we start with an effective orbifold atlas, we can create a groupoid  by defining the object space
to be a disjoint union of the chart spaces $\tU$, and the arrow space to consist of equivalence
classes of spans of chart embeddings $\tU_1 \leftarrow \tV \to \tU_2$ \cite{Pr1,PSi}.  
In particular, if we start with an orbifold atlas that consists of a  single chart given by the
quotient of an effective action of a finite group $G_U$ on an open subset $\tU$ of $\mathbb{R}^n$,
then as discussed above, the chart embeddings from $\tU$ to itself form a group which is isomorphic
to the structure group $G_U$.  Hence the groupoid obtained is the translation groupoid
$G_U \ltimes \tU$, encoding the quotient orbifold as expected.  
It was shown in~\cite{MP} that this leads to an equivalence between the category of effective
\'etale Lie groupoids, localized with respect to Morita equivalences, and the category of orbifolds
created from effective atlases (endowed with a suitable notion of maps between orbifolds).  
  
\subsection{Ineffective Orbifolds}
More recently, people have become interested in ineffective orbifolds, and examples of these have
become important in various applications arising from mathematical physics.  
From a groupoid perspective it is relatively easy to define an ineffective orbifold.  Just as the
groupoids representing effective orbifolds are effective smooth \'etale groupoids with a proper
diagonal,   so the appropriate notion of \emph{ineffective} orbifold would be to take a quotient
of an \emph{ineffective} smooth \'etale groupoid with proper diagonal (and again call two of
them equivalent if there is a Morita equivalence between the groupoids).
This definition of ineffective orbifold has been used frequently (see~\cite{ALR,CR1,CR2,FO}
for instance)  and  leads immediately to a good  notion of a map between ineffective orbifolds.  

We can define a chart by simply dropping the effective condition
from the definition given above. When we do so, however,  an equivariant embedding no longer
dictates a unique group homomorphism, so we need to consider what chart embeddings to include
in our atlas.  One generalization that has been used is to include the group homomorphism as
part of the embedding information, and to define an embedding of orbifold charts as a group
homomorphism $\phi:  G_1 \to G_2$ together with a map   $\lambda:  \tU_1 \to \tU_2$ which is
equivariant in the sense that $\lambda(g\cdot x) = \phi(g)\cdot\lambda(x)$, and furthermore
require that $\phi$ induces an isomorphism on the kernels of the actions~\cite{CR1}.
  
Although we want each chart embedding to be equivariant in this sense, simply taking this as a
definition of embeddings does not lead us immediately to a satisfactory definition of an
orbifold atlas as a whole.   In particular, we do not necessarily obtain the result (discussed
above in the effective case) that the group of embeddings from a chart to itself is isomorphic to the
structure group of the chart.  For example, consider an open  subset $\tU$ of $\mathbb{R}^n$ with a
trivial action of a group $G$, so that $\tU = U$ and the orbifold is completely ineffective.   
In this case, the embedding of $\tU$ to itself must be the identity on the space, and any
automorphism $\phi$ of the group $G$ can be used.   So if $G$ is $\mathbb{Z}/2$, there are no
non-trivial automorphisms of $G$ and so there is only a single orbifold embedding from $\tU$ to
itself.   On the other hand, if $G$ is $\mathbb{Z}/2 \oplus \mathbb{Z}/2$, the group of
automorphisms is isomorphic to $S_3$ rather than $\mathbb{Z}/2 \oplus \mathbb{Z}/2$, and so we
obtain more chart embeddings
than the size of the structure group.   In both of these cases, if we construct a groupoid based
on the atlas of this single chart and its self-embeddings, we will \emph{not} get the
translation groupoid  $G \ltimes\tU$.  Thus, the correspondence between the atlas viewpoint and
the groupoid viewpoint has broken down.  This is the discrepancy observed also by Henriques
and Metzler in~\cite{HM}.
     
We can also consider whether the alternate orbifold atlas point of view, based on local
compatibility, may have a better correspondence with the groupoids;  this approach 
was used in~\cite{CR1} for instance.   Unfortunately, there are smooth \'etale groupoids with
proper diagonal which are not Morita equivalent, but  whose quotient spaces are homeomorphic and
whose associated atlas charts give the same germ structure at corresponding points.
So these orbifolds would be the same when considered as spaces with germs of atlas charts as in~\cite{CR1}, but
they are not the same as `groupoid orbifolds'.    The following example illustrates this.  

\begin{exa}\label{E:z3}
We define groupoids $\calG$ and $\calH$ representing completely
ineffective orbifolds with  $\mathbb{Z}/3$ isotropy at each point on a circle $S^1$.   Both
groupoids have object space given by the circle $S^1$.  For the first groupoid,  $\calG$,  let
the space of arrows be $\mathbb{Z}/3\times S^1$ (where $\mathbb{Z}/3$ has the discrete topology) 
where the source and target maps are the projection, $s(a,x)=x=t(a,x)$.  We define the groupoid
composition $m((a,x),(b,x))=(ab,x)$, where $ab$ is taken as the product in $\mathbb{Z}/3$.
For $\calH$ we define the space of arrows to be the disjoint union of two copies of $S^1$.
One circle $X$ will represent the identity arrows, with source and target maps defined by the
identity $S^1\to S^1$.  The second circle $Y$ will represent the non-identity arrows (which still
act trivially, since this is a completely ineffective orbifold).   Both the source and target maps 
will be the double wrapping map $S^1\to S^1$  that make $Y$ into a two-fold cover
of the object space $S^1$.   Composition  $m(a,b)$ for arrows $a$ and $b$ is only defined when
the pair of points lies in the same fiber over the space of objects (since $s=t$).  
So let $a,b$ be two points in the same fiber.   If $a\in X$, $m(a,b)=b$ since $X$ is the identity
component, and similarly, if $b\in X$ then $m(a,b)=a$.   If $a=b\in Y$, then $m(a,b)=c$ where
$c$ is the other point in $Y$ in the same fiber as $a=b$.   
If $a,b\in Y$ and $a\neq b$, then $m(a,b)=c$ where $c$ is the unique point in the identity
component $X$ in the same fiber as $a$ and $b$.   The reader may verify that for both groupoids
the germs of the charts will give atlas charts  on $S^1$ with structure group $\mathbb{Z}/3$, acting
ineffectively.   However, $\calG$ and $\calH$ are not Morita equivalent, and so do not define the
same orbifold using the groupoid definition.  
\end{exa}

\subsection{Goal of the Paper}   
The goal of this paper is to give an atlas definition that does match up with the groupoid
(and hence, stack) perspective of orbifolds, and agrees in a suitable sense with the classical
Satake definition in the effective case.      We have shown in the previous examples  that
the existing definitions of ineffective chart embeddings do not lead to the same objects as the
groupoids, in either the Satake or local definitions of atlases.  Therefore we will reconsider
what a chart embedding should be in a definition of an orbifold atlas.   We will use the
language of  bimodules to do this, as discussed in the next section.

\section{Background: Group Bimodules}\label{sec-3}
When we create a groupoid presentation of an orbifold starting from an atlas, the structure of the groupoid
depends on the collection of all the embeddings from one chart into another (or to itself).
This collection will have the structure of a group bimodule, which we develop in this section.  

Given any $G$-set $X$ and $H$-set $Y$ and any group homomorphism $\phi:G \to H$,
we can consider the set of  $\phi$-equivariant maps, i.e.\ maps $f:  X \to Y$ that satisfy
$f(g\cdot x)=\phi(g)\cdot x$.   Each such map generates further equivariant maps $fg$ and $hf$
for any $g\in G$ and $h\in H$, giving a  set of maps $\mathsf{f}$ with commuting left $H$- and
right $G$-actions.  These equivariant maps form a bimodule with the left- and right-actions
tying them together. (Note that we use the word `bimodule' for sets with compatible left and
right group actions; we do not assume that there is a notion of addition to make them
abelian groups.)

Recall that in the previous attempt at defining an ineffectively atlas,  a chart embedding
$(f,\phi)\colon (\tU,G,\pi_U)\to(\tV,H,\pi_V)$ consisted of
an injective  group homomorphism $\phi\colon G \to H$ with a $\phi$-equivariant map $f \colon\tU\to\tV$ between
the spaces.  Our approach will be to consider an equivariant chart embedding $(f,\phi)$
as part of a larger structure, the bimodule $\mathsf{f}$, and to consider this entire set of maps
with its left and right actions as part of the structure.  Our  atlas definition will be 
based on these bimodule chart embeddings.  

\subsection{Group Bimodules}
For finite groups
$G$ and  $H$,  a  bimodule $\boldphi\colon G\longarrownot\longrightarrow H$ is given by a (non-empty) set $\mathsf{M}$,
together with a right $G$-action and a left
$H$-action that are compatible, i.e., such that for $m\in \mathsf{M}$, $g\in G$ and $h\in H$, we
have $(h\cdot m)\cdot g=h\cdot(m\cdot g)$.  Thus, it is  a set
with a left $H$-, right $G$- action, which we call a left $H$-, right $G$- module, 
or simply a bimodule if the actions are clear from the context.  

We denote the bicategory of such bimodules by  $\Groupprof$.   In this bicategory, the identity 
$G \longarrownot\longrightarrow G$ is defined by the trivial bimodule $\mathsf{G}$, given by the set
$\mathsf{G} = G$ together with left and right actions defined by multiplication.    

Composition of  $\Groupprof$ is defined by a tensor product:  given 
$\boldphi:{G}\longarrownot\longrightarrow{H}$ and $\boldpsi:{H}\longarrownot\longrightarrow{K}$
defined by the bimodules $\mathsf{M} $ and
$\mathsf{N} $ respectively, their composition $\boldpsi
\circ\boldphi$ is given by the tensor product bimodule $\mathsf{Q} = \mathsf{N} \otimes_H
\mathsf{M} =  (\mathsf{N} \times \mathsf{M}) /\sim $, 
where $\sim$ is the equivalence relation  $(y\cdot h,x)\sim(y,h\cdot x)$
for each $x\in \mathsf{M}, y\in \mathsf{N}$ and $h\in H$.  The  actions are defined by 
$k\cdot\left[y,x\right]\cdot g=\left[k\cdot y,x\cdot g\right]$ for $g\in G$
and $k\in K$.  We will write $y\otimes x$ for any equivalence class $[y,x]$ in $\mathsf{Q}$.

A 2-cell or natural transformation in the bicategory is defined by a map
of bimodules $\boldalpha: \mathsf{M} \to\mathsf{N}$, i.e., a set map
satisfying $\boldalpha(h\cdot m\cdot g)=h\cdot\alpha(m)\cdot g$.  Under these definitions, $\Groupprof$ is a bicategory.

  \subsection{Group Homomorphisms and Bimodules}\label{SS:gphom}
Let $\phi: G \to H$
be a group homomorphism.     We consider the induced bimodule 
$\boldphi:  G \longarrownot\longrightarrow H$.  This bimodule 
is defined to be the set $H$, together with the free and transitive left action of $H$,
and the right action of $G$ defined by $h \cdot g = h \phi(g)$.   Under this correspondence,
the right action of $G$ is free, respectively transitive, if and only if the original group
homomorphism $\phi$ is injective, respectively surjective.   We can think of $\boldphi$ as the set consisting of maps  $G\to H$ that are of the form $g\mapsto h\phi(g)$.

Every bimodule $\boldpsi\colon {G}\longarrownot\longrightarrow{H}$ for which $H$ acts freely and transitively 
is of this form, although not in a canonical way.   The set $\boldpsi$ is an
$H$-torsor;  once we choose its `base point', i.e., identity element $e$, the group
homomorphism $\psi:G\rightarrow H$  inducing $\boldpsi$ can be found in the following way:
$\psi(g)$ is the unique element $h\in H$ such that $e\cdot g=h\cdot e$.
Note that a different choice of base point will produce a conjugate homomorphism.
So we can think of these particular bimodules as corresponding to conjugacy classes of
homomorphisms $G\rightarrow H$.

\subsection{Atlas Bimodules}\label{SS:athom}
 We now  return to our initial consideration of  how an atlas may be constructed from chart
embeddings, where the full collection of embeddings from one chart to another comes with actions
of the structure groups of the source and target charts.      In the effective case, any two
chart embeddings are always related by a unique  element of the codomain structure group, so that
one embedding is obtained from the other by the action of this group
element~\cite[Proposition~A.1]{MP}.   That is, the action  of the codomain group is always
both free and transitive on the set of embeddings.  When we move to the more general
ineffective setting, we will want to ensure that our chart embedding bimodules continue to
have both of these important properties.   Freeness of the action of the codomain
group ensures that the chart embeddings can encode the orbit structure in the codomain.
Transitivity ensures that all embeddings that make up the bimodule give the same maps
on the quotient space.      
 
Lastly, we consider the action of the domain group.  Since the codomain action is free and
transitive, we have seen in Section~\ref{SS:gphom} that such a bimodule corresponds to a 
conjugacy class of group homomorphisms.   For an embedding, we want these group homomorphisms
to be injective, and so we require that the action of the domain structure group is free. 
 
Therefore we make the following definition. 
 
\begin{defin}
Given two finite groups $G$ and $H$, a bimodule $\mathsf{M}: G \longarrownot\longrightarrow H$  is an
\emph{atlas bimodule} if it has the following properties:

\begin{itemize}
 \item it is non-empty;
 \item the left $H$-action is free and transitive;
 \item the right $G$-action is free.
 \end{itemize} 
\end{defin}  
 
It follows from the discussion in Section~\ref{SS:gphom} that we have the following result.
 
\begin{lem}\label{lem-01}
For each atlas bimodule $\mathsf{M}:  G \longarrownot\longrightarrow {H}$  and for each $m\in \mathsf{M}$, there
is an induced injective group  homomorphism $\Lambda_m:G\rightarrow H$, such that for each
$g\in G$ we have $m\cdot g=\Lambda_m(g)\cdot m$.
\end{lem}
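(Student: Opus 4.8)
The plan is to follow the torsor bookkeeping already sketched in Section~\ref{SS:gphom}, using $m$ as the chosen base point. First I would observe that, since the left $H$-action on $\mathsf{M}$ is free and transitive, for every $g\in G$ the element $m\cdot g\in\mathsf{M}$ can be written as $h\cdot m$ for a \emph{unique} $h\in H$: transitivity gives existence, and freeness gives uniqueness. This lets me \emph{define} $\Lambda_m\colon G\to H$ by declaring $\Lambda_m(g)$ to be that unique element, so that $m\cdot g=\Lambda_m(g)\cdot m$ holds by construction. Note $\Lambda_m(e)=e$ since $m\cdot e=m=e\cdot m$ and $H$ acts freely.

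Next I would check that $\Lambda_m$ is a group homomorphism by a one-line computation using associativity/compatibility of the two actions: for $g_1,g_2\in G$,
\[
\Lambda_m(g_1g_2)\cdot m = m\cdot(g_1g_2) = (m\cdot g_1)\cdot g_2 = \bigl(\Lambda_m(g_1)\cdot m\bigr)\cdot g_2 = \Lambda_m(g_1)\cdot(m\cdot g_2) = \Lambda_m(g_1)\Lambda_m(g_2)\cdot m .
\]
Cancelling $m$ on the right, which is legitimate because the $H$-action is free, yields $\Lambda_m(g_1g_2)=\Lambda_m(g_1)\Lambda_m(g_2)$.

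Finally I would verify injectivity: if $\Lambda_m(g)=e$, then $m\cdot g = e\cdot m = m$, and since the right $G$-action is free this forces $g=e$; hence $\ker\Lambda_m$ is trivial. This uses all three defining properties of an atlas bimodule — transitivity and freeness of the $H$-action for well-definedness of $\Lambda_m$, and freeness of the $G$-action for injectivity.

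There is no real obstacle here; the statement is essentially a restatement of the $H$-torsor description in Section~\ref{SS:gphom} with the extra input that freeness of the right $G$-action promotes the induced homomorphism to an injective one. The only point requiring a moment's care is being explicit that the cancellation steps rely on freeness of the relevant action, and that a different choice of base point $m'$ would replace $\Lambda_m$ by a conjugate homomorphism (which we do not need here, but is worth keeping in mind for later).
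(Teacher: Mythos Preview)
Your proof is correct and is exactly the explicit version of what the paper intends: the paper does not write out a proof but simply notes that the lemma follows from the torsor discussion in Section~\ref{SS:gphom}, and your argument is precisely that discussion carried out with base point $m$, together with the observation that freeness of the right $G$-action forces injectivity.
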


Moreover, any two such group homomorphisms defined by $\mathsf{M}$ are related by conjugation:  
if $m' \in \mathsf{M}$ is such that $m' = hm$, then $\Lambda_{m'}(g) = h\Lambda_m(g)h^{-1}$.
So the atlas bimodule defines a conjugacy class of  injective group homomorphisms $G \to H$.   

\section{The New Atlas Definition}\label{S:defn}
\subsection{Satake Atlases for Effective Orbifolds} \label{S:efforb}
We begin by giving the formal definition of an atlas in the effective case to use as
comparison.

In Satake's original definition~\cite{Sa2}, an \emph{effective orbifold chart}
(or \emph{effective uniformizing system}) for an open subset $U$ of a topological space $X$
consists of a triple $(\tU, G,\pi)$, where:

\begin{itemize}
 \item $\tU$ is a connected open subset of $\mathbb{R}^n$ 
 \item $G$ is a finite group acting \emph{effectively} on $\tU$ \item $\pi:\tU\rightarrow U$ is a 
   continuous and surjective map that induces a homeomorphism between $U$ and $\tU/G$.  
\end{itemize}

An embedding of charts $(\tU_1,G_1,\pi_1)\hookrightarrow(\tU_2,G_2,\pi_2)$ is defined by a 
smooth embedding $\lambda:  \tU_1 \to \tU_2$ such that $\pi_1 = \pi_2 \circ \lambda$.
An atlas for a space consists of a collection of charts $\calU$ such that the quotients cover
the underlying space, and the collection of all chart embeddings between them.  
Furthermore, the charts are required to be locally compatible in the sense that for any two charts 
for subsets $U,V\subseteq X$ and any point $x\in U\cap V$, there is a neighbourhood $W\subseteq
U\cap V$ containing $x$ with a chart $(\tW,G_W,\pi_W)$ in $\calU$, and chart embeddings into
$(\tU, G_U,\pi_U)$ and $(\tV, G_V,\pi_V)$.      We will refer to such atlases, charts and chart
embeddings as Satake atlases, Satake charts and Satake embeddings respectively.

It was shown in~\cite{Sa2} and~\cite{MP} that whenever $(\tU, G_U,\pi_U)$ and $(\tV, G_V,\pi_V)$ 
are effective charts with $U\subseteq V$ and $\tU$ is connected and simply connected, there is at least one
chart embedding $\lambda\colon \tU\to\tV$.   We will generally assume that our atlas charts are
connected and simply connected so that we have this property.

There are a couple of additional properties of these atlases that we will use below and we will
list here to make this paper more self-contained.
The first result was mentioned in~\cite{Sa2} and proved in full generality in~\cite{MP}.

\begin{lma}\label{overlap}
If $\lambda$ and $\lambda'$ are chart embeddings $\tU\to\tV$ and their images overlap, i.e.,
$\lambda(\tU)\cap\lambda'(\tU)\neq\varnothing$, then there is an element $g\in G_U$ such that
$\lambda=\lambda'\circ g$.
\end{lma}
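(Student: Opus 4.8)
The plan is to reduce, using the density of the locus of points with trivial isotropy, to the case in which $\lambda$ and $\lambda'$ coincide at one such \emph{regular} point of $\tV$, to prove coincidence on a whole neighbourhood there, and then to propagate by connectedness of $\tU$.

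I would first record two standard facts about a smooth effective action of a finite group $K$ on a connected open $\Omega\subseteq\mathbb{R}^n$: (i) the regular locus $\Omega_{\mathrm{reg}}$ is open and dense, because for $k\neq e$ the fixed set $\Omega^{k}$ is closed with empty interior (an element acting as the identity on an open set acts as the identity everywhere — linearise the action near a boundary point of $\mathrm{int}(\Omega^{k})$, Bochner), so $\bigcup_{k\neq e}\Omega^{k}$ is a finite union of nowhere-dense closed sets; and (ii) each $p\in\Omega_{\mathrm{reg}}$ has a neighbourhood $N$ with $k\cdot N\cap N=\varnothing$ for all $k\neq e$, so that the quotient map is injective on $N$. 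Since $\lambda,\lambda'$ are immersions between open subsets of $\mathbb{R}^n$ they are open maps, hence $W:=\lambda(\tU)\cap\lambda'(\tU)$ is open and, by hypothesis, nonempty. Choose $p\in W$ regular for the $G_V$-action and write $p=\lambda(a)=\lambda'(b)$. Then $\pi_U(a)=\pi_V\lambda(a)=\pi_V\lambda'(b)=\pi_U(b)$, so $b=g_0\cdot a$ for some $g_0\in G_U$; replacing $\lambda'$ by the chart embedding $\lambda'\circ g_0$ we may assume $\lambda(a)=\lambda'(a)=p$ with $p$ regular, and it then suffices to prove $\lambda=\lambda'$ (the element $g$ of the statement is then $g_0$).

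Using (ii), pick $N\ni p$ with $\pi_V|_N$ injective; on the neighbourhood $\lambda^{-1}(N)\cap(\lambda')^{-1}(N)$ of $a$ both maps take values in $N$ and compose with $\pi_V$ to give $\pi_U$, so $\lambda=\lambda'$ there. Let $S\subseteq\tU$ be the open set of points admitting a neighbourhood on which $\lambda=\lambda'$; it is nonempty, so by connectedness it is enough to show it is closed. For $x\in\overline{S}$, continuity gives $\lambda(x)=\lambda'(x)$ and differentiating on the coincidence neighbourhoods gives $D\lambda_x=D\lambda'_x$. Taking $P\ni x$ with $\lambda|_P$ a diffeomorphism onto its image and $\lambda'(P)\subseteq\lambda(P)$, set $\nu:=(\lambda|_P)^{-1}\circ\lambda'$ on a small ball $P'\ni x$: then $\nu$ is a local diffeomorphism with $\nu(x)=x$, $D\nu_x=\mathrm{id}$, $\pi_U\circ\nu=\pi_U$ (so $\nu$ sends each point into its $G_U$-orbit), and $\nu=\mathrm{id}$ on the nonempty open set $S\cap P'$. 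Putting the $\mathrm{Stab}_{G_U}(x)$-action into Bochner normal form near $x$ and shrinking $P'$ so that only elements of that stabiliser can move a point of $P'$ back into $P'$, one sees that on each connected component $C$ of the regular locus of $P'$ the map $\nu$ agrees with a single (linear) element of $\mathrm{Stab}_{G_U}(x)$; since $x$ lies in the closure of every such component (the fixed subspaces all pass through $x$) and $D\nu_x=\mathrm{id}$, that element must be $e$. Hence $\nu=\mathrm{id}$ on a dense subset of $P'$, so $\nu=\mathrm{id}$, $\lambda=\lambda'$ near $x$, and $x\in S$. Thus $S=\tU$ and $\lambda=\lambda'$, completing the reduced claim.

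The main obstacle is exactly this final propagation across \emph{singular} points of $\tV$: the clean argument through injectivity of $\pi_V$ only works near regular points, and carrying the coincidence through a singular point needs the density of the regular locus, the local linearisability of smooth finite group actions, and the first-order information $D\lambda_x=D\lambda'_x$ extracted from coincidence on an open set. (Alternatively, one may simply invoke Satake~\cite{Sa2} and~\cite{MP}, where the argument is given in full.)
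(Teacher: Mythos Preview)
The paper does not actually prove this lemma; it merely cites \cite{Sa2} and \cite{MP}, stating that the result ``was mentioned in~\cite{Sa2} and proved in full generality in~\cite[Proposition~A.1 and the appendix]{MP}.'' So there is nothing in the paper to compare your argument against directly.

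Your proof is essentially the argument of \cite[Appendix]{MP}: reduce by a group element to the case $\lambda(a)=\lambda'(a)$ at a regular point, obtain local coincidence from injectivity of $\pi_V$ there, and then propagate through singular points via Bochner linearisation and a connectedness argument. The outline is sound. A couple of points that deserve one more sentence each if you want the argument to stand on its own: (i) the claim that $x$ lies in the closure of \emph{every} connected component of the regular locus in $P'$ is true in the linearised model because the nontrivial fixed sets are proper linear subspaces through the origin and any point of the complement can be scaled toward~$0$ while remaining in its component --- you should say this explicitly; (ii) deducing $D\nu_x=k$ from $\nu|_C=k$ uses continuity of $D\nu$ together with $x\in\overline{C}$, not differentiation \emph{at} $x$ along $C$. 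With those small clarifications your write-up is a faithful reconstruction of the cited proof, and your final parenthetical is exactly what the paper itself does.
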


This result can be used to prove the following.

\begin{lma}\label{factorization}
Given charts $U_i\subseteq U_j\subseteq U_k$ with $\tU_i$ simply connected and connected, and
embeddings $\lambda_{ki}\colon\tU_i\to\tU_k$ and $\lambda_{kj}\colon \tU_j\to\tU_k$ such that
$\lambda_{ki}(\tU_i)\cap\lambda_{kj}(\tU_j)\neq\varnothing$, there is an embedding
$\lambda_{ji}\colon\tU_i\to\tU_j$ such that $\lambda_{kj}\circ\lambda_{ji}=\lambda_{ki}$. 
\end{lma}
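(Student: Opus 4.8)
The plan is to build $\lambda_{ji}$ as a lift. The hypotheses give us $\lambda_{ki}\colon\tU_i\to\tU_k$ and $\lambda_{kj}\colon\tU_j\to\tU_k$ with overlapping images. First I would use the overlap condition together with the interpretation of chart embeddings as equivariant maps: $\lambda_{kj}$ is equivariant with respect to an induced injective homomorphism $G_j\to G_k$, and in the effective case $\lambda_{kj}$ is a diffeomorphism onto its image $\lambda_{kj}(\tU_j)$, which is an open subset of $\tU_k$ invariant under the image subgroup. The idea is to restrict attention to the open subset $V := \lambda_{ki}^{-1}\bigl(\lambda_{kj}(\tU_j)\bigr) \subseteq \tU_i$, which is non-empty by the overlap hypothesis, and to define a candidate map on the preimage by the formula $\lambda_{kj}^{-1}\circ\lambda_{ki}$ there.

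The key steps, in order: (1) Show $V$ is all of $\tU_i$, i.e.\ that $\lambda_{ki}(\tU_i)\subseteq\lambda_{kj}(\tU_j)$. This is where connectedness of $\tU_i$ enters: $V$ is open, and one argues it is also closed in $\tU_i$ — a boundary point $\tx$ of $V$ would have $\lambda_{ki}(\tx)$ on the boundary of the open set $\lambda_{kj}(\tU_j)$ inside $\tU_k$, but using the germ/chart structure near $\tx$ (both $U_i\subseteq U_k$ and $U_j\subseteq U_k$ are charts for the same underlying space, and $\pi_k\circ\lambda_{ki}=\pi_i$, $\pi_k\circ\lambda_{kj}=\pi_j$ with $U_i\subseteq U_j$) one shows $\lambda_{ki}(\tx)$ actually lies in the interior. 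Concretely, $\pi_k(\lambda_{ki}(\tx))=\pi_i(\tx)\in U_i\subseteq U_j$, so this point is covered by $\tU_j$, and since $\lambda_{kj}$ realizes $U_j$ as $\lambda_{kj}(\tU_j)/\mathrm{im}(G_j)$ inside the $G_k$-action on $\tU_k$, the fiber of $\pi_k$ over $\pi_i(\tx)$ meets $\lambda_{kj}(\tU_j)$; one then uses that $\lambda_{kj}(\tU_j)$ is open and $G_k$-related to conclude $\lambda_{ki}(\tx)$ itself is in it. (2) Given the containment, set $\lambda_{ji} := \lambda_{kj}^{-1}\circ\lambda_{ki}\colon\tU_i\to\tU_j$; this is a smooth embedding since $\lambda_{kj}^{-1}$ is a diffeomorphism onto the open set $\lambda_{kj}(\tU_j)$. (3) Check $\pi_j\circ\lambda_{ji}=\pi_i$: indeed $\pi_j\circ\lambda_{kj}^{-1}\circ\lambda_{ki}$, and since $\pi_k\circ\lambda_{kj}=\pi_j$ on $\tU_j$ we get $\pi_j\circ\lambda_{kj}^{-1}=\pi_k$ on $\lambda_{kj}(\tU_j)$, hence the composite is $\pi_k\circ\lambda_{ki}=\pi_i$. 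So $\lambda_{ji}$ is a genuine Satake chart embedding, and by construction $\lambda_{kj}\circ\lambda_{ji}=\lambda_{ki}$.

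Alternatively — and this may be the cleanest route — one can invoke Lemma~\ref{overlap} more directly via a lifting argument: since $\tU_i$ is simply connected and connected, any local section of $\lambda_{kj}$ along $\lambda_{ki}$ extends globally, and Lemma~\ref{overlap} guarantees that the possibly-many local lifts differ by elements of $G_j$ and can be glued/adjusted consistently. The first description is essentially this lifting statement made explicit using that $\lambda_{kj}$ is a local diffeomorphism onto its image.

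The main obstacle I expect is step (1): proving $\lambda_{ki}(\tU_i)$ is entirely contained in $\lambda_{kj}(\tU_j)$ rather than just meeting it. Openness of the overlap set is immediate, but closedness requires genuinely using that $U_i\subseteq U_j$ (not merely $U_i, U_j\subseteq U_k$) — this is what prevents $\lambda_{ki}(\tU_i)$ from "escaping" the region covered by $\tU_j$ — together with the fact that $\lambda_{kj}$ identifies $\tU_j$ with a union of $G_k$-translates filling out the preimage $\pi_k^{-1}(U_j)$ in an appropriate local sense. Once this containment is in hand, the remaining verifications that $\lambda_{ji}$ is a smooth embedding compatible with the projections are routine.
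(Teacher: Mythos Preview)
Your route differs from the paper's. The paper does not try to prove the containment $\lambda_{ki}(\tU_i)\subseteq\lambda_{kj}(\tU_j)$ first; instead it uses simple connectedness of $\tU_i$ to pick \emph{some} chart embedding $\nu\colon\tU_i\to\tU_j$ (adjusted so that $\nu(x_i)=x_j$ for a chosen point in the overlap), observes that $\lambda_{kj}\circ\nu$ and $\lambda_{ki}$ then have overlapping images in $\tU_k$, and applies Lemma~\ref{overlap} to find $g\in G_i$ with $\lambda_{kj}\circ\nu\circ g=\lambda_{ki}$, setting $\lambda_{ji}:=\nu\circ g$.

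Your direct construction $\lambda_{ji}=\lambda_{kj}^{-1}\circ\lambda_{ki}$ is valid and in fact needs only connectedness of $\tU_i$, but step~(1) is not justified as written: knowing that the $G_k^{\red}$-fiber through $\lambda_{ki}(\tx)$ meets $\lambda_{kj}(\tU_j)$ does not by itself place $\lambda_{ki}(\tx)$ there. What closes the gap is, once again, Lemma~\ref{overlap}: for any $g\in G_k^{\red}$ the sets $\lambda_{kj}(\tU_j)$ and $g\cdot\lambda_{kj}(\tU_j)$ are either equal or disjoint (an overlap would give $h\in G_j$ with $g\circ\lambda_{kj}=\lambda_{kj}\circ h$, hence equal images), so $\lambda_{kj}(\tU_j)$ is clopen in $\pi_k^{-1}(U_j)=\bigcup_{g}g\cdot\lambda_{kj}(\tU_j)$. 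Since $\lambda_{ki}(\tU_i)$ is connected, lies in $\pi_k^{-1}(U_j)$ (as $\pi_k\circ\lambda_{ki}=\pi_i$ and $U_i\subseteq U_j$), and meets $\lambda_{kj}(\tU_j)$, the containment follows and your argument goes through. Your ``alternative'' lifting picture is slightly off, though: $\lambda_{kj}$ is an embedding, not a covering, so once the containment is established the lift $\lambda_{kj}^{-1}\circ\lambda_{ki}$ is unique and no gluing is needed.
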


\begin{proof}
Let $x_k\in\lambda_{ki}(\tU_i)\cap\lambda_{kj}(\tU_j)\subseteq\tU_k$.   Furthermore, let
$x_i\in\tU_i$ and $x_j\in \tU_j$ be the unique points such that $\lambda_{ki}(x_i)=x_k$ and
$\lambda_{kj}(x_j)=x_k$.  Now let $\nu\colon\tU_i\to\tU_j$ be an embedding such that $\nu(x_i)=x_j$.
Then $\lambda_{kj}\circ\nu(\tU_i)\cap\lambda_{ki}(\tU_i)\neq\varnothing$.   By Lemma~\ref{overlap}
there is an element $g\in G_i$ such that $\lambda_{kj}\circ\nu\circ g=\lambda_{ki}$.
So $\lambda_{ji}:=\nu\circ g$ has the required property.
\end{proof}

The third result can be found in~\cite{Pr1} and~\cite{PSi} and is the key result in the
construction of the effective groupoid associated to a Satake atlas.   It is a stronger version of
local compatibility property of charts in atlases, called {\em strong local compatibility},
which follows from the atlas properties:

\begin{lma}\label{strong_comp}
Given charts $\tU_1,\tU_2,\tU_3$ in an atlas $\calU$, with chart embeddings
$\lambda_{31}\colon$ $\tU_1\to\tU_3$ and $\lambda_{32}\colon\tU_2\to\tU_3$ and points
$x_1\in \tU_1$ and $x_2\in \tU_2$ with $\lambda_{31}(x_1)=\lambda_{32}(x_2)$, there is
a chart $\tU_4$ in $\calU$ with embeddings  $\lambda_{i4}\colon\tU_4\to\tU_i$ for $i=1,2$,
such that $\lambda_{31}\circ\lambda_{14}=\lambda_{32}\circ\lambda_{24}$.
Moreover, there is a point $y$ in $\tU_4$, such that $\lambda_{i4}(y)=x_i$ for $i=1,2$.
\end{lma}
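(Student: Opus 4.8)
The plan is to deduce strong local compatibility (Lemma~\ref{strong_comp}) from ordinary local compatibility together with the factorization result of Lemma~\ref{factorization}. The starting data are charts $\tU_1,\tU_2,\tU_3$, embeddings $\lambda_{31}\colon\tU_1\to\tU_3$ and $\lambda_{32}\colon\tU_2\to\tU_3$, and points $x_1\in\tU_1$, $x_2\in\tU_2$ with $\lambda_{31}(x_1)=\lambda_{32}(x_2)=:x_3\in\tU_3$. First I would descend to the quotient: let $U_i=\tU_i/G_i$ and let $\pi_i\colon\tU_i\to U_i$ be the projections, and let $\bar\lambda_{3i}\colon U_i\to U_3$ be the induced continuous embeddings on quotients. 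Write $x:=\pi_3(x_3)\in U_3$, and let $p_i:=\pi_i(x_i)\in U_i$, so $\bar\lambda_{3i}(p_i)=x$. Identifying $U_1$ and $U_2$ with their images, we get a point lying in the overlap $\bar\lambda_{31}(U_1)\cap\bar\lambda_{32}(U_2)$ inside $U_3$, and hence a point of $X$ contained in both charts $U_1$ and $U_2$.

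Now I would invoke local compatibility of the atlas $\calU$ at this common point: there is a neighbourhood $W$ of it contained in $U_1\cap U_2$ (viewed inside $X$) with a chart $(\tU_4,G_4,\pi_4)$ in $\calU$ admitting chart embeddings $\mu_1\colon\tU_4\to\tU_1$ and $\mu_2\colon\tU_4\to\tU_2$. We may shrink $W$, and correspondingly choose $\tU_4$ connected and simply connected (using the standing assumption on atlas charts and the fact that we can always pass to a smaller chart of this type), so that its quotient is a connected neighbourhood of $x$ small enough that $\bar\lambda_{31}\circ\bar\mu_1$ and $\bar\lambda_{32}\circ\bar\mu_2$ have the same image in $U_3$. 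This puts us in position to apply Lemma~\ref{overlap}: the two embeddings $\lambda_{31}\circ\mu_1$ and $\lambda_{32}\circ\mu_2$ from $\tU_4$ to $\tU_3$ have overlapping images, so there is $g\in G_4$ with $\lambda_{31}\circ\mu_1\circ g=\lambda_{32}\circ\mu_2$. Replacing $\mu_1$ by $\mu_1\circ g$, we may assume $\lambda_{31}\circ\mu_1=\lambda_{32}\circ\mu_2$, which gives the commuting square with $\lambda_{i4}:=\mu_i$.

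It remains to arrange the point condition $\lambda_{i4}(y)=x_i$ for a single $y\in\tU_4$. For this I would first pick any $y_0\in\tU_4$ mapping to the chosen point of $W$ under $\pi_4$; then $\pi_1(\mu_1(y_0))=p_1=\pi_1(x_1)$, so $\mu_1(y_0)=a\cdot x_1$ for some $a\in G_1$, and likewise $\mu_2(y_0)=b\cdot x_2$ for some $b\in G_2$. The commuting square evaluated at $y_0$ gives $\lambda_{31}(a\cdot x_1)=\lambda_{32}(b\cdot x_2)$; using the equivariance of $\lambda_{31},\lambda_{32}$ (they are equivariant with respect to the induced homomorphisms $G_1\to G_3$, $G_2\to G_3$) together with $\lambda_{31}(x_1)=\lambda_{32}(x_2)$ and freeness of the $G_3$-action away from the appropriate locus, one checks the images of $a$ and $b$ in $G_3$ agree; then replacing $\mu_1$ by $\mu_1\circ a^{-1}$ and $\mu_2$ by $\mu_2\circ b^{-1}$ keeps the square commuting (one must verify this does not destroy the equality, using Lemma~\ref{overlap} again if necessary) and now sends $y:=y_0$ to $x_1$ and $x_2$ respectively.

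The main obstacle I anticipate is the last paragraph: simultaneously getting the square to commute \emph{and} the point $y$ to map to the prescribed $x_1,x_2$. Adjusting $\mu_i$ by group elements to fix the point can spoil commutativity, and fixing commutativity via Lemma~\ref{overlap} reintroduces an unknown group element acting on $\tU_4$; one has to check these two adjustments can be made compatibly. The clean way to handle this is to do the point-matching \emph{first} — choose $\mu_1,\mu_2$ with $\mu_i(y_0)=x_i$ from the outset, which is possible because local compatibility charts can be taken through any prescribed point in the fibre — and only \emph{then} apply Lemma~\ref{overlap}, making sure the correcting element $g\in G_4$ fixes $y_0$. That it can be taken to fix $y_0$ follows because both composites already agree at $y_0$ (they send it to $x_3$), so the element $g$ provided by Lemma~\ref{overlap} lies in the isotropy group of $y_0$ in $G_4$ and hence $\mu_i\circ g$ still sends $y_0$ to $x_i$. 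This ordering is what makes the argument go through without circularity.
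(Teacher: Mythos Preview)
The paper does not include a proof of this lemma; it simply records it as a known result and cites \cite{Pr1} and \cite{PSi}. So there is no in-paper argument to compare against, and your proposal should be judged on its own merits.

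Your final ``clean way'' is correct and is the standard argument. To spell it out: choose $y_0\in\tU_4$ over the common quotient point; adjust each $\mu_i$ by an element of $G_i$ so that $\mu_i(y_0)=x_i$ (possible since $\mu_i(y_0)$ and $x_i$ lie in the same $G_i$-orbit); then $\lambda_{31}\mu_1$ and $\lambda_{32}\mu_2$ agree at $y_0$, so their images meet and Lemma~\ref{overlap} yields a unique $g\in G_4$ with $\lambda_{31}\mu_1 g=\lambda_{32}\mu_2$. Evaluating at $y_0$ and using injectivity of $\lambda_{31}\mu_1$ forces $g\cdot y_0=y_0$, so $\lambda_{14}:=\mu_1 g$ still sends $y_0$ to $x_1$, and with $\lambda_{24}:=\mu_2$ and $y:=y_0$ you are done.

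The earlier parts of the proposal, however, should be pruned. The paragraph where you try to match the images of $a\in G_1$ and $b\in G_2$ inside $G_3$ by ``freeness of the $G_3$-action away from the appropriate locus'' is not right: the action is effective, not free, and in any case the induced homomorphisms $G_i\to G_3$ are only determined up to conjugacy, so equating elements there is delicate. The shrinking step ``so that $\bar\lambda_{31}\bar\mu_1$ and $\bar\lambda_{32}\bar\mu_2$ have the same image in $U_3$'' is also superfluous: these maps into $X$ agree automatically (both are the inclusion of $W$), and on the level of $\tU_3$ the single-point overlap at $x_3$ is all Lemma~\ref{overlap} needs. Rewrite the proof as just the ``clean way'' and drop the rest.
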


\subsection{Ineffective Atlas Definition} 
We give our new definition for an orbifold atlas in the language of atlas
bimodules of the previous section.   We will use the notion of chart for  ineffective
orbifolds  that has become standard in the literature. 

\begin{defin}\cite{ALR, CR1, CR2, LU}
Let $U$ be a non-empty connected topological space; an \emph{orbifold chart} (also known as a
\emph{uniformizing system}) of dimension $n$ for $U$ is a quadruple $(\tU,G,\rho,\pi)$ where:

\begin{itemize}
 \item $\tU$ is a connected and simply connected open subset of $\mathbb{R}^n$;
 \item $G$ is a finite group;
 \item $\rho:G\rightarrow\Aut(\tU)$ is a (not necessarily faithful) representation of $G$
  as a group of smooth automorphisms of $\tU$; we set $G^{\red}:=\rho(G)\subseteq\Aut(\tU)$ and 
  $\operatorname{Ker}(G):=\operatorname{Ker}(\rho)\subseteq G$;
 \item $\pi:\tU\rightarrow U$ is a continuous and surjective map that induces a homeomorphism
  between $U$ and $\tU/G^{\red}$.
\end{itemize}
\end{defin}

Associated to every orbifold chart $(\tU,G,\rho,\pi)$ we have the Satake chart $(\tU,G^{\red},\pi)$ 
with the reduced group $G^{\red}$; we will refer to this also as the {\em reduced chart}.   
An orbifold atlas $\mathfrak{U}$  for a space $X$ will contain  a collection $\calU$  of such
orbifold charts such that the underlying quotient spaces are open in $X$ and cover all of $X$,
and in fact, we will require that the associated reduced charts form a Satake atlas.

We write $\calO(X)$ for the lattice of all open subsets of $X$ and given an atlas $\calU$ we will  define  $\calO(\calU)$ to be the category with objects the charts of  $\calU$, with  morphisms given by inclusions of the underlying quotient sets.    The poset $\calO(\calU)$ is not required to be a lattice, as it is obvious that
it is not closed under unions, and contrary to the case of manifolds it may not be closed
under intersections either.

The usual local compatibility condition for Satake atlases requires that for any two charts $U$
and $V$ and point $x\in U\cap V$, there is a chart $W\subseteq U\cap V$ containing $x$.
This can be phrased in terms of the elements of the posets $\calO(X)$ and $\calO(\calU)$ as follows:
$$ \bigcup \{W\in\calO(\calU);\,W\subseteq U,W\subseteq V\}=U\cap V\quad\mbox{ in }\calO(X). $$
We will still require this for ineffective orbifolds.
We will also consider these posets as categories with at most one arrow between any two objects: we
write $\mu_{VU}\colon U\to V$ whenever $U\subseteq V$. 
When we are working with an indexed family of charts $U_i$ with $i\in I$, 
we will write $\mu_{ji}$ for $\mu_{U_jU_i}$.

For a Satake atlas with connected, simply connected charts, whenever there is an arrow $\mu_{ji}
\colon U_i\to U_j$ between the quotient open subsets of two orbifold charts, there is a Satake
embedding $\lambda_{ji}\colon \tU_i\to\tU_j$ (i.e., with $\pi_i=\pi_j\circ\lambda_{ji}$).
We will also call these embeddings {\em concrete}.   We will show that the set of all
these embeddings form an atlas bimodule $G_i^{\red}\longarrownot\longrightarrow G_j^{\red}$. 

\begin{defin}
Fix orbifold charts $\unif{1}$ and $\unif{2}$ for open sets $U_1$ and $U_2$ of a given
topological space $X$, with $U_1\subseteq U_2$.   Then we define the set of {\em concrete
embeddings} \[\mbox{\it Con}(\mu_{21}):=\left\{\textrm{all smooth embeddings }\lambda:\tU_1
\longrightarrow \tU_2\textrm{ s.t. }\pi_1=\pi_2\circ\lambda\right\}.\]
\end{defin}
This set coincides with the set of all Satake embeddings from the reduced
chart $(\tU_1,G_1^{\red},\pi_1)$ to the reduced chart $(\tU_2,G_2^{\red},\pi_2)$.


\begin{lem}\label{D:4.6}
Let $X$ be a topological space with a collection of orbifold charts
$\calU=\{(\tU_i,G_i,\rho_i,\pi_i)\}_{i\in I}$ of dimension $n$, such that  the reduced charts 
$\{(\tU_i,G_i^{\red},\pi_i)\}_{i\in I}$ form a Satake atlas $\calU^{\red}$.
For any $\mu_{ji}$ in $\calO(\calU)$, the set $\mbox{\it Con}(\mu_{ji})$ forms an atlas
bimodule $G_i^{\red}\longarrownot\longrightarrow G_j^{\red}$ with actions given by
composition. We will denote this bimodule by
\[{\CONCR}(\mu_{ji}):G_i^{\red}\longarrownot\longrightarrow G_j^{\red}.\] 
Furthermore, if $i=j$ the atlas bimodule $\CONCR(\mu_{ii})$ is isomorphic to the trivial
bimodule $\mathsf{G}_i^{\red}$ associated to the group $G_i^{\red}$.
\end{lem}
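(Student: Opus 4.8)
The plan is to verify the three defining properties of an atlas bimodule for $\mbox{\it Con}(\mu_{ji})$ and then treat the diagonal case $i=j$ separately. First I would check that $\mbox{\it Con}(\mu_{ji})$ is non-empty: since the reduced charts form a Satake atlas and our charts are connected and simply connected, the existence result recalled just before Lemma~\ref{overlap} (attributed to \cite{Sa2} and \cite{MP}) guarantees at least one Satake embedding $\lambda\colon\tU_i\to\tU_j$ whenever $U_i\subseteq U_j$, and such an embedding is by definition a concrete embedding. The left action of $G_j^{\red}$ and the right action of $G_i^{\red}$ are given by post- and pre-composition, $g\cdot\lambda\cdot h = g\circ\lambda\circ h$ (thinking of $G_i^{\red},G_j^{\red}$ as subgroups of $\Aut(\tU_i),\Aut(\tU_j)$); these are well-defined (the composite still satisfies $\pi_i=\pi_j\circ(g\circ\lambda\circ h)$ because $\pi_k$ is $G_k^{\red}$-invariant), and they commute since composition is associative. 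So $\mbox{\it Con}(\mu_{ji})$ is a bimodule.

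Next I would establish freeness of both actions. For the right $G_i^{\red}$-action: if $\lambda\circ h = \lambda$ with $\lambda$ injective, then $h=\id$ on $\tU_i$, hence $h=e$ in $G_i^{\red}$ since that group acts faithfully by construction ($G^{\red}=\rho(G)\subseteq\Aut(\tU)$). For the left $G_j^{\red}$-action: if $g\circ\lambda=\lambda$, then $g$ fixes the open set $\lambda(\tU_i)$ pointwise, so $g=\id$ on $\tU_j$ (an automorphism of a connected open subset of $\mathbb{R}^n$ fixing an open subset pointwise is the identity), hence $g=e$ in $G_j^{\red}$. For transitivity of the left action, given $\lambda,\lambda'\in\mbox{\it Con}(\mu_{ji})$ I would note that their images overlap: indeed $\pi_i=\pi_j\circ\lambda=\pi_j\circ\lambda'$ forces $\lambda(\tU_i)$ and $\lambda'(\tU_i)$ to have the same image in $U_j$, so they cannot be disjoint. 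Then Lemma~\ref{overlap} supplies $g\in G_i^{\red}$ with $\lambda=\lambda'\circ g$ — but I actually want an element of $G_j^{\red}$ acting on the left; the correct reading of Lemma~\ref{overlap} in this symmetric situation, or equivalently the standard fact that two Satake embeddings with the same induced map on quotients differ by a deck transformation of the target, gives $g'\in G_j^{\red}$ with $\lambda'=g'\circ\lambda$. This is the step I expect to be the main obstacle: getting transitivity on the correct side, and being careful that Lemma~\ref{overlap} (as stated, producing a domain-group element) is applied together with the observation that $\lambda$ and $\lambda'$ are two embeddings of the \emph{same} chart into the \emph{same} chart, so one may also view one as $\tU_i\to\tU_j$ and compose appropriately; alternatively one invokes the uniqueness-up-to-target-deck-transformation statement directly from \cite{MP}.

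Finally, for $i=j$ with $\mu_{ii}\colon U_i\to U_i$ the identity arrow, $\mbox{\it Con}(\mu_{ii})$ consists of all smooth self-embeddings $\lambda\colon\tU_i\to\tU_i$ with $\pi_i=\pi_i\circ\lambda$. By \cite[Proposition~A.1]{MP} (the effective-case fact recalled in Section~\ref{sec-2}) every such $\lambda$ is of the form $x\mapsto g\cdot x$ for a unique $g\in G_i^{\red}$, and every $g\in G_i^{\red}$ yields such an embedding; this gives a bijection $\mbox{\it Con}(\mu_{ii})\to G_i^{\red}$. I would then check this bijection is a bimodule isomorphism onto $\mathsf{G}_i^{\red}$: composition of embeddings corresponds to multiplication in $G_i^{\red}$, and the left and right actions by composition on $\mbox{\it Con}(\mu_{ii})$ correspond exactly to left and right multiplication in $G_i^{\red}$. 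Since we have already shown $\mbox{\it Con}(\mu_{ii})$ is an atlas bimodule, and it is now identified set-theoretically and equivariantly with the trivial bimodule $\mathsf{G}_i^{\red}$, the claim follows.
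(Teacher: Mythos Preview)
Your proposal is essentially correct and arrives at the same approach as the paper, which simply cites \cite[Proposition~A.1]{MP} to obtain both freeness and transitivity of the left $G_j^{\red}$-action in one stroke, and uses effectiveness of the $G_i^{\red}$-action on $\tU_i$ for freeness on the right. You supply more detail than the paper (non-emptiness, the explicit verification that the actions are well-defined and commute, the $i=j$ case), which is fine.

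There is one genuine error in your argument, though you recover from it. Your claim that $\lambda(\tU_i)$ and $\lambda'(\tU_i)$ must overlap because they have the same image in $U_j$ is false: two embeddings can land in different sheets of $\pi_j^{-1}(U_i)$ and be disjoint (e.g.\ take $G_j^{\red}=\mathbb{Z}/2$ acting by reflection and $\tU_i$ small enough to embed into either half). So Lemma~\ref{overlap} is not available in general here, and your detour through it does not work. Fortunately you already flag this as the main obstacle and propose the correct fix: invoke directly the result from \cite{MP} that any two Satake embeddings $\tU_i\rightrightarrows\tU_j$ differ by a \emph{unique} element of $G_j^{\red}$ acting on the left. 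That single citation gives transitivity and freeness of the left action simultaneously, making your separate freeness argument (via ``an automorphism fixing an open set is the identity'') unnecessary; note also that this auxiliary claim is only valid because the automorphism has finite order, which your parenthetical does not say.
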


\begin{proof}
As noted by Satake~\cite{Sa2} and proved in full generality in~\cite[Proposition~A.1]{MP},
for any two atlas embeddings between effective charts 
$\lambda,\lambda'\colon \tU_i\rightrightarrows\tU_j$, there is
a unique group element $\rho_j(g)\in G_j^{\red}$ such that $\rho_j(g)\circ\lambda=\lambda'$.
So the action by $G_j^{\red}$ is free and transitive.   The fact that the action by $G_i^{\red}$
on this set is free follows from the fact that its action on $\tU_i$ is effective.
\end{proof}

For Satake orbifolds, and hence for the reduced charts of ineffective orbifolds, this
family of bimodules  is compatible in the following sense:\\

\begin{lem}\label{lem-06}
Let $X$ be a topological space with a collection of orbifold charts
$\calU=\{(\tU_i,G_i,\rho,\pi_i)\}_{i\in I}$ of dimension $n$, such that the  reduced charts 
$\{(\tU_i,G_i^{\red},\pi_i)\}_{i\in I}$  form a Satake atlas $\calU^{\red}$.  
Then there is a pseudofunctor
\[\CONCR:\calO(\calU)\longrightarrow\Groupprof,\]
defined on objects by $\CONCR(U_i):=G_i^{\red}$ and on morphisms by the atlas bimodules
\[\CONCR(\mu_{ji}):G_i^{\red}\longarrownot\longrightarrow G_j^{\red}\]
described above.  
\end{lem}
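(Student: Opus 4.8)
The plan is to exhibit the data of a pseudofunctor $\CONCR\colon\calO(\calU)\to\Groupprof$ and then verify its two coherence axioms; the guiding observation is that composition of concrete embeddings is strictly associative and unital, and that any morphism of atlas bimodules which is equivariant for the left actions is automatically invertible, so nothing has to be constructed by hand beyond the comparison cells.

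On objects I would set $\CONCR(U_i):=G_i^{\red}$, and on a morphism $\mu_{ji}\colon U_i\to U_j$ of $\calO(\calU)$ --- that is, on an inclusion $U_i\subseteq U_j$ --- the atlas bimodule $\CONCR(\mu_{ji})\colon G_i^{\red}\longarrownot\longrightarrow G_j^{\red}$ of Lemma~\ref{D:4.6}, which is well defined and non-empty by that lemma. If $\mu_{ji}\colon U_i\to U_j$ and $\mu_{kj}\colon U_j\to U_k$ are composable then $U_i\subseteq U_j\subseteq U_k$, so $\mu_{ki}$ is again a morphism of $\calO(\calU)$, and for the comparison $2$-cell I would take
\[
c_{kji}\colon\CONCR(\mu_{kj})\otimes_{G_j^{\red}}\CONCR(\mu_{ji})\longrightarrow\CONCR(\mu_{ki}),\qquad \lambda_{kj}\otimes\lambda_{ji}\longmapsto\lambda_{kj}\circ\lambda_{ji},
\]
with the unit $2$-cell at $U_i$ given by the isomorphism $\CONCR(\mu_{ii})\cong\mathsf{G}_i^{\red}$ already supplied by Lemma~\ref{D:4.6}. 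The verifications here are routine: $\lambda_{kj}\circ\lambda_{ji}$ is a smooth embedding with $\pi_k\circ\lambda_{kj}\circ\lambda_{ji}=\pi_j\circ\lambda_{ji}=\pi_i$, hence lies in $\CONCR(\mu_{ki})$; the assignment respects the tensor relation because $(\lambda_{kj}\circ\rho_j(h))\circ\lambda_{ji}=\lambda_{kj}\circ(\rho_j(h)\circ\lambda_{ji})$; and it is a map of bimodules because postcomposition by an element of $G_k^{\red}$ and precomposition by an element of $G_i^{\red}$ commute past the bracketing. That $c_{kji}$ is an isomorphism I would see by counting: its target is an atlas bimodule, hence a left $G_k^{\red}$-torsor of cardinality $|G_k^{\red}|$; its source has the same cardinality, since $|\CONCR(\mu_{kj})|\cdot|\CONCR(\mu_{ji})|=|G_k^{\red}|\cdot|G_j^{\red}|$ and the orbits by which we quotient all have size $|G_j^{\red}|$ because the right $G_j^{\red}$-action on the atlas bimodule $\CONCR(\mu_{kj})$ is free; and since $c_{kji}$ is $G_k^{\red}$-equivariant it is injective, hence bijective.

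The remaining work, and where I expect the only genuine effort to lie, is checking the pentagon and the two unit triangles --- but this is bookkeeping rather than mathematics, and there is no obstruction of substance. Since $\calO(\calU)$ has at most one arrow between any two objects, there is no naturality square to verify. For the pentagon attached to $U_i\subseteq U_j\subseteq U_k\subseteq U_l$ one evaluates both composites on a representative $(\lambda_{lk}\otimes\lambda_{kj})\otimes\lambda_{ji}$, pushing it through the standard associator of $\Groupprof$, which merely re-brackets $(p\otimes n)\otimes m\leftrightarrow p\otimes(n\otimes m)$: one path yields $(\lambda_{lk}\circ\lambda_{kj})\circ\lambda_{ji}$ and the other $\lambda_{lk}\circ(\lambda_{kj}\circ\lambda_{ji})$, which coincide by associativity of composition of maps. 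The unit triangles reduce likewise to $\rho_j(e)\circ\lambda_{ji}=\lambda_{ji}=\lambda_{ji}\circ\rho_i(e)$ once one unwinds the left and right unitors of $\Groupprof$ (the maps $\mathsf{G}_j^{\red}\otimes_{G_j^{\red}}\CONCR(\mu_{ji})\to\CONCR(\mu_{ji})$, $\rho_j(g)\otimes\lambda_{ji}\mapsto\rho_j(g)\circ\lambda_{ji}$, and its right-handed analogue) against the identification $\CONCR(\mu_{ii})\cong\mathsf{G}_i^{\red}$. Assembling these pieces produces the pseudofunctor, with value $G_i^{\red}$ on objects and $\CONCR(\mu_{ji})$ on morphisms as claimed.
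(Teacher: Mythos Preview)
Your proof is correct and follows the same overall architecture as the paper: you define the comparison $2$-cells by composition of Satake embeddings, take the unit cells from Lemma~\ref{D:4.6}, and reduce the coherence pentagon and triangles to strict associativity and unitality of function composition. The one substantive difference is how you establish that $c_{kji}$ is an isomorphism. The paper argues this by invoking \cite[Proposition~A.1]{MP} together with Lemma~\ref{overlap}, which amounts to proving surjectivity (every embedding $\tU_i\to\tU_k$ factors through some embedding into $\tU_j$) and injectivity directly from the structure of Satake embeddings. You instead give a counting argument: both source and target have cardinality $|G_k^{\red}|$, the map is $G_k^{\red}$-equivariant, and both sides are left $G_k^{\red}$-torsors, so the map is automatically a bijection. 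Your route is more self-contained, as it uses only the atlas-bimodule properties already recorded in Lemma~\ref{D:4.6} and avoids the appeal to \cite{MP}; the paper's route, on the other hand, makes the geometric content (factorization of embeddings) explicit. One small point worth making explicit in your write-up: the implication ``$G_k^{\red}$-equivariant hence injective'' relies on the source also being a $G_k^{\red}$-torsor, which you have implicitly shown via the cardinality count together with freeness of the induced left action; stating this would make the step transparent.
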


\begin{proof}
In order to make $\CONCR$ a pseudofunctor, we need to equip it with a coherent family of
composition and unit 2-cells.   Specifically, we need  natural isomorphisms of bimodules
$\boldgamma_{kji}\colon\CONCR(\mu_{kj})\circ\CONCR(\mu_{ji}) \Rightarrow\CONCR(\mu_{ki})$
for each composable pair of arrows $U_i\stackrel{\mu_{ji}}{\to}U_j\stackrel{\mu_{kj}}{\to}U_k$
in $\calO(\calU)$, and natural isomorphisms $\gamma_i$ from $\mathsf{G}_i^{\red}$ to
$\CONCR(\mu_{ii})$, where $\mathsf{G}_i^{\red}$ is the trivial bimodule which represents
the identity arrow in $\Groupprof$ for $G_i^{\red}$.

Recall that $\CONCR(\mu_{kj})\circ\CONCR(\mu_{ji})$ is given by the atlas bimodule 
$\CONCR(\mu_{kj})\otimes_{G_j^{\red}}\CONCR(\mu_{ji})$.   We define $\gamma_{kji}$ first as
a set map from $\CONCR(\mu_{kj})\otimes_{G_j^{\red}}\CONCR(\mu_{ji})$ to $\CONCR(\mu_{ki})$
by taking the equivalence class $\lambda_{kj}\otimes\lambda_{ji}$ to the composition
$\lambda_{kj}\circ\lambda_{ji}\in\CONCR(\mu_{ki})$ (where $\lambda_{kj}\in \CONCR(\mu_{kj})$
and $\lambda_{ji}\in\CONCR(\mu_{ji})$).  
It is easy to see that this is a well defined map; moreover, using~\cite[Proposition~A.1]{MP}
and Lemma~\ref{overlap} we get that $\gamma_{kji}$ is a bijection compatible with the actions
of $G_i^{\red}$ and $G_k^{\red}$, i.e., it corresponds to an invertible $2$-cell in
$\Groupprof$ as desired.   

The $\gamma_i\colon\mathsf{G}_i^{\red}\Rightarrow\CONCR(\mu_{ii})$ are given 
by the isomorphisms mentioned in Lemma~\ref{D:4.6}.

A straightforward computation shows that the data of the $\gamma_{kji}$ for each
composable pair of arrows $U_i\stackrel{\mu_{ji}}{\to}U_j\stackrel{\mu_{kj}}{\to}U_k$ in
$\calO(\calU)$, the $\gamma_i$ for each object $U_i$ in $\calO(\calU)$, together with the data
of $\CONCR(U_i)$ and $\CONCR(\mu_{ji})$ form a pseudofunctor $\CONCR$ from $\calO(\calU)$
to $\Groupprof$.
\end{proof}

Note that $\CONCR$ cannot be defined as a strict  functor, since the sets $\CONCR(\mu_{kj})
\otimes_{G_j^{\red}}\CONCR(\mu_{ji})$ and $\CONCR(\mu_{ki})$ are only isomorphic, not equal.
Furthermore, the isomorphism $\boldgamma_i\colon G_i\Rightarrow\CONCR(\mu_{ii})$ 
for each $U_i\in\calO(\calU)$ provides us with a kind
of base point for each $\CONCR(\mu_{ii})$, picking out a unit element $\gamma_i(e_i^{\red})$
for the $G_i^{\red}$-torsor $\CONCR(\mu_{ii})$ (where we write $e_i^{\red}$ for
the unit element of $G_i^{\red}$).

The pseudofunctor $\CONCR$ will form one layer of our atlas, giving the information about the
structure on the quotient space level and the information about the reduced structure underlying
the (possibly) non-reduced one.   However, as discussed above and seen in our examples,
when the action is not effective we may need more atlas embeddings in order to have our atlas
correctly encode the isotropy of the codomain charts.    We will therefore create another layer
of bimodules to our atlas embeddings, which we will call the bimodules of \emph{abstract embeddings}.  
These will be atlas bimodules for the full structure groups;  that is, the action will be free and
transitive for the entire codomain structure group $G_j$, and free for the entire  domain
structure group $G_i$ (and not just the reduced groups as above).
For every pair of  charts $\tU_i,\,\tU_j$ with $U_i\subseteq U_j$ there will be a surjection 
from the bimodule of abstract embeddings to the bimodule of 
concrete embeddings.  So each abstract embedding will correspond to a concrete one,  
but some bimodules of abstract embeddings may be larger, indicating an
ineffective action. 

\begin{defin}\label{defin-02}
Let $X$ be a paracompact, second countable, Hausdorff topological space.   An \emph{orbifold atlas}
of dimension $n$ for $X$, denoted $\mathfrak{U}$, consists of the datum of:

\begin{enumerate}[(1)]
 \item a collection $\calU=\{(\tU_i,G_i,\rho_i,\pi_i)\}_{i\in I}$ of orbifold charts, of
  dimension $n$, connected and simply connected, such that the  reduced charts $\{(\tU_i, G_i^{\red},\pi_i)\}_{i\in I}$ form
  a Satake atlas for $X$; we denote by $(\CONCR,\gamma)\colon\calO(\calU)\to\Groupprof$ the
  induced pseudofunctor as in Lemma~\ref{lem-06};
 
 \item a pseudofunctor
  
  \[{\ABST}:\calO(\calU)\longrightarrow\Groupprof\]
  such that for each $i\in I$, $\ABST(U_i)=G_i$ 
  and for each $\mu_{ji}$ in $\calO(\calU)$, ${\ABST}(\mu_{ji})$ is an atlas bimodule
  $G_i\longarrownot\longrightarrow G_j$, (i.e., the left action of $G_j$ is free and transitive and the right
  action of $G_i$ is free).   We denote the pseudofunctor composition isomorphisms of $\ABST$ by
  
  \[\boldalpha_{kji}\colon{\ABST}(\mu_{kj})\circ{\ABST}(\mu_{ji})\Longrightarrow{\ABST}(\mu_{ki})\]
  for each composable pair of arrows $U_i\stackrel{\mu_{ji}}{\to}U_j\stackrel{\mu_{kj}}{\to}U_k$
  in $\calO(\calU)$ and the identity isomorphisms by
  
  \[\boldalpha_i\colon\mathsf{G}_i\Longrightarrow\ABST(\mu_{ii})\quad\mbox{for each }i\in I;\]

 \item an oplax transformation $\boldrho=(\{\boldrho_i\}_{i\in I}, 
  \{\rho_{ji}\}_{i,j\in I, U_i\subseteq U_j})\colon\ABST\Rightarrow\CONCR$: Recall that each $\rho_i$
  is a group homomorphism from $G_i$ to $G_i^{\red}$, hence it induces a bimodule $\boldrho_i:
  G_i \longarrownot\longrightarrow G_i^{\red}$ (as in Section~\ref{SS:gphom}).
  We require that these $\boldrho_i$ be the components of an
  oplax transformation $\boldrho\colon \ABST\Rightarrow \CONCR$.   So, for each arrow
  $U_i\stackrel{\mu_{ji}}{\longrightarrow} U_j$ in $\calO(\calU)$, there is a map of bimodules,
  
  \[\rho_{ji}:\boldrho_j\otimes_{G_j}{\ABST}(\mu_{ji})\Longrightarrow
  \CONCR(\mu_{ji})\otimes_{G_i^{\red}} \boldrho_i,\]
  as in
  
  \[
  \begin{tikzpicture}[xscale=1.3,yscale=-0.8]
    \node (A0_0) at (0, 0) {$G_i$};
    \node (A0_2) at (2, 0) {$G_j$};
    \node (A2_0) at (0, 2) {$G_i^{\red}$};
    \node (A2_2) at (2, 2) {$G_j^{\red}$};
    
    \node (B1_1) [rotate=315] at (0.9, 1) {$\Downarrow$};
    \node (A1_1) at (1.2, 1) {$\scriptstyle{\rho_{ji}}$};
    
    \path (A2_0) edge [->] node [auto,swap] {$\scriptstyle{\CONCR(\mu_{ji})}$} (A2_2);
    \slashing{\path (A2_0)  -- node [] {$\scriptscriptstyle{\slash}$} (A2_2);}{}
    \path (A0_0) edge [->] node [auto] {$\scriptstyle{\ABST(\mu_{ji})}$} (A0_2);
    \slashing{\path (A0_0)  --node [] {$\scriptscriptstyle{\slash}$} (A0_2);}{}
    \path (A0_2)  edge [->]node [auto] {$\scriptstyle{\boldrho_j}$} (A2_2);
    \slashing{\path (A0_2)  --node [rotate=90]{$\scriptscriptstyle{\slash}$} (A2_2);}{}
    \path (A0_0) edge [->] node [auto,swap] {$\scriptstyle{\boldrho_i}$} (A2_0);
    \slashing{\path (A0_0)  --node [rotate=90] {$\scriptscriptstyle{\slash}$} (A2_0);}{}
  \end{tikzpicture}
  \]
   
  We further require that:

  \begin{enumerate}
    \item  the $\rho_{ji}$ are surjective maps of bimodules;
    \item  \label{D:transitivity} (transitivity on the kernel) whenever
      $\rho_{ji}(e_j^{\red}\otimes\lambda)=\rho_{ji}(e_j^{\red}\otimes \lambda')$ for
      $\lambda,\lambda'\in{\ABST}(\mu_{ji})$, there is an element $g\in G_i$ such that
      $\lambda\cdot g=\lambda'$ (here $e_j^{\red}$ is the identity element of $G_j^{\red}$). 
 \end{enumerate}
\end{enumerate}

For simplicity, we may denote the atlas $\mathfrak{U}$ by $(X,\calU,\ABST,\boldrho)$ or simply 
$(\calU,\ABST,\boldrho)$ if $X$ is clear from the context.
\end{defin}

\begin{Notation}
As in the previous definition, we will use the notation $e_i$ for the unit element of $G_i$ and
$e_i^{\red}$ for the unit element of $G_i^{\red}$.  However, when the group is clear from
the context we will simply write $e$ for the unit element.
\end{Notation}

\begin{rmk}\label{compatibility}
The 2-cell components of an oplax transformation between pseudofunctors are required to be
compatible with the structure cells of the pseudofunctors.   Since we will use this property
several times in the rest of this paper, we spell it out explicitly for the oplax transformation
$\boldrho$.

For any composable pair of arrows $U_i\stackrel{\mu_{ji}}{\rightarrow}U_j\stackrel{\mu_{kj}}{\to}
U_k$ in $\calO(\calU)$,

\[
\begin{tikzpicture}[xscale=2.3,yscale=-1.4]
    \node (A0_1) at (1, 0) {$G_j$};
    \node (A1_2) at (2, 1) {$G_k$};
    \node (A3_0) at (0, 2.5) {$G_i^{\red}$};
    \node (A3_2) at (2, 2.5) {$G_k^{\red}$};
    \node (A1_0) at (0, 1) {$G_i$};
    
    \node (A1_1) at (1, 0.6) {$\Downarrow\,\scriptstyle{\boldalpha_{kji}}$};
    \node (A2_1) [rotate=315] at (0.9, 1.75) {$\Downarrow$};
    \node (K2_1) at (1.1, 1.75) {$\scriptstyle{\rho_{ki}}$};

    \def \z {-0.8}
    \node (C0_0) at (3+\z/2, 0.5) {$\equiv$};
    \node (A0_5) at (5+\z, 0) {$G_j$};
    \node (A1_4) at (4+\z, 1) {$G_i$};
    \node (A1_6) at (6+\z, 1) {$G_k$};
    \node (A2_5) at (5+\z, 1.5) {$G_j^{\red}$};
    \node (A3_4) at (4+\z, 2.5) {$G_i^{\red}$};
    \node (A3_6) at (6+\z, 2.5) {$G_k^{\red}$};

    \node (A0_4) at (4.5+\z, 1) {$\Downarrow\,\scriptstyle{\rho_{ji}}$};
    \node (A0_6) at (5.5+\z, 1) {$\Downarrow\,\scriptstyle{\rho_{kj}}$};
    \node (A3_5) at (5+\z, 2.1) {$\Downarrow\,\scriptstyle{\gamma_{kji}}$};

    \node (B0_0) at (4.4+\z, 1.75) {$\scriptstyle{\CONCR(\mu_{ji})}$};
    \slashing{\path (A3_4) edge [->]node [rotate=25] {$\scriptscriptstyle{\slash}$} (A2_5);}
      {\path (A3_4) edge [->]node [rotate=25] {} (A2_5);}
    \node (B1_1) at (5.6+\z, 1.75) {$\scriptstyle{\CONCR(\mu_{kj})}$};
    \slashing{\path (A2_5) edge [->]node [rotate=335] {$\scriptscriptstyle{\slash}$} (A3_6);}
      {\path (A2_5) edge [->]node [rotate=335] {} (A3_6);}
    \path (A0_5) edge [->] node [auto] {$\,\scriptstyle{\boldrho_j}$} (A2_5);
       \slashing{\path (A0_5) -- node [rotate=90] {$\scriptscriptstyle{\slash}$} (A2_5);}{}
    \path (A1_4) edge [->] node [auto,swap] {$\scriptstyle{\boldrho_i}\,$} (A3_4);
       \slashing{\path (A1_4) -- node [rotate=90] {$\scriptscriptstyle{\slash}$} (A3_4);}{}
    \path (A0_1) edge [->] node [auto] {$\scriptstyle{\ABST(\mu_{kj})}$} (A1_2);
       \slashing{\path (A0_1) -- node [rotate=335] {$\scriptscriptstyle{\slash}$} (A1_2);}{}
    \path (A0_5) edge [->] node [auto] {$\scriptstyle{\ABST(\mu_{kj})}$} (A1_6);
       \slashing{\path (A0_5) -- node [rotate=335] {$\scriptscriptstyle{\slash}$} (A1_6);}{}
    \path (A1_0) edge [->] node [auto,swap] {$\scriptstyle{\boldrho_i}\,$} (A3_0);
       \slashing{\path (A1_0) -- node [rotate=90] {$\scriptscriptstyle{\slash}$} (A3_0);}{}
    \path (A1_0) edge [->] node [auto] {$\scriptstyle{\ABST(\mu_{ji})}$} (A0_1);
       \slashing{\path (A1_0) -- node [rotate=25] {$\scriptscriptstyle{\slash}$} (A0_1);}{}
    \path (A1_4) edge [->] node [auto] {$\scriptstyle{\ABST(\mu_{ji})}$} (A0_5);
       \slashing{\path (A1_4) -- node [rotate=25] {$\scriptscriptstyle{\slash}$} (A0_5);}{}
    \path (A1_2) edge [->] node [auto] {$\,\scriptstyle{\boldrho_k}$} (A3_2);
       \slashing{\path (A1_2) -- node [rotate=90] {$\scriptscriptstyle{\slash}$} (A3_2);}{}
    \path (A1_6) edge [->] node [auto] {$\,\scriptstyle{\boldrho_k}$} (A3_6);
       \slashing{\path (A1_6) --node [rotate=90] {$\scriptscriptstyle{\slash}$} (A3_6);}{}
    \slashing
      {\path (A3_0.350) -- node [auto,swap] {$\scriptstyle{\CONCR(\mu_{ki})}$} (A3_2.190);
        \path (A3_0) edge [->] node [] {$\scriptscriptstyle{\slash}$} (A3_2);}
      {\path (A3_0) edge [->] node [auto,swap] {$\scriptstyle{\CONCR(\mu_{ki})}$} (A3_2);}
    \slashing
       {\path (A3_4.350) -- node [auto,swap] {$\scriptstyle{\CONCR(\mu_{ki})}$} (A3_6.190);
         \path (A3_4) edge [->] node [] {$\scriptscriptstyle{\slash}$} (A3_6);}
       {\path (A3_4) edge [->] node [auto,swap] {$\scriptstyle{\CONCR(\mu_{ki})}$} (A3_6);}
    \slashing
      {\path (A1_0.350) -- node [auto,swap] {$\scriptstyle{\ABST(\mu_{ki})}$} (A1_2.190);
        \path (A1_0) edge [->] node [] {$\scriptscriptstyle{\slash}$} (A1_2);}
      {\path (A1_0) edge [->] node [auto,swap] {$\scriptstyle{\ABST(\mu_{ki})}$} (A1_2);}
\end{tikzpicture}
\]

For any $U_i$ in $\calO(\calU)$,

\[
\begin{tikzpicture}[xscale=1.5,yscale=-0.9]
    \def \z {-0.7}
    \node (A0_0) at (0, 0) {$G_i$};
    \node (A0_2) at (2, 0) {$G_i$};
    \node (A0_6) at (6, 0+\z) {$G_i$};
    \node (A1_3) at (3, 0.8) {$\equiv$};
    \node (A2_0) at (0, 2) {$G_i^{\red}$};
    \node (A2_2) at (2, 2) {$G_i^{\red}$};
    \node (A2_4) at (4, 2+\z) {$G_i^{\red}$};
    \node (A2_6) at (6, 2+\z) {$G_i^{\red}$};
    \node (A0_4) at (4, -0.7) {$G_i$};
    
    \node (A0_1) at (1, -0.35) {$\Downarrow\,\scriptstyle{\boldalpha_i}$};
    \node (A2_5) at (5, 2.35+\z) {$\Downarrow\,\scriptstyle{\gamma_{i}}$};
    \node (A1_5) [rotate=315] at (4.9, 0.9+\z) {$\Downarrow$};
    \node (K1_5) at (5.2, 0.9+\z) {$\scriptstyle{\hat{\rho}_i}$};
    \node (A2_1) [rotate=315] at (0.9, 1.1) {$\Downarrow$};
    \node (K2_1) at (1.2, 1.1) {$\scriptstyle{\rho_{ii}}$};
    
    \path (A2_0) edge [->] node [auto,swap] {$\scriptstyle{\CONCR(\mu_{ii})}$} (A2_2);
       \slashing{\path (A2_0) -- node [] {$\scriptscriptstyle{\slash}$} (A2_2);}{}
    \path (A0_6) edge [->] node [auto] {$\,\scriptstyle{\boldrho_i}$} (A2_6);
       \slashing{\path (A0_6) -- node [rotate=90] {$\scriptscriptstyle{\slash}$} (A2_6);}{}
    \path (A0_4) edge [->] node [auto] {$\scriptstyle{\mathsf{G}_i}$} (A0_6);
    \path (A0_4) edge [->] node [auto,swap] {$\scriptstyle{\boldrho_i}\,$} (A2_4);
    \slashing{\path (A0_4) -- node [] {$\scriptscriptstyle{\slash}$} (A0_6);}{}
    \slashing{\path (A0_4) -- node [rotate=90] {$\scriptscriptstyle{\slash}$} (A2_4);}{}
    \path (A0_2) edge [->] node [auto] {$\,\scriptstyle{\boldrho_i}$} (A2_2);
       \slashing{\path (A0_2) -- node [rotate=90] {$\scriptscriptstyle{\slash}$} (A2_2);}{}
    \path (A0_0) edge [->] node [auto,swap] {$\scriptstyle{\ABST(\mu_{ii})}$} (A0_2);
      \slashing{\path (A0_0) --node [] {$\scriptscriptstyle{\slash}$} (A0_2);}{}
    \path (A2_4) edge [->] node [auto] {$\scriptstyle{\mathsf{G}_i^{\red}}$} (A2_6);
       \slashing{\path (A2_4) -- node [] {$\scriptscriptstyle{\slash}$} (A2_6);}{}
    \path (A0_0) edge [->] node [auto,swap] {$\scriptstyle{\boldrho_i}\,$} (A2_0);
       \slashing{\path (A0_0) -- node [rotate=90] {$\scriptscriptstyle{\slash}$} (A2_0);}{}
    \node (C) at (1, -1.05) {$\scriptstyle{\mathsf{G}_i}$};
    \slashing
       {\path (A0_0) edge [->,bend right=60]node [] {$\scriptscriptstyle{\slash}$} (A0_2);}
       {\path (A0_0) edge [->,bend right=60]node [] {} (A0_2);}
    \node (D) at (5, 3.2+\z) {$\scriptstyle{\CONCR(\mu_{ii})}$};
    \slashing
       {\path (A2_4) edge [->,bend left=60]node [] {$\scriptscriptstyle{\slash}$} (A2_6);}
       {\path (A2_4) edge [->,bend left=60]node [] {} (A2_6);}
\end{tikzpicture}
\]
where $\hat{\rho}_i$ is the transformation determined by
$\hat{\rho}_i(e\otimes g):=\boldrho_i(g)\otimes e$.
\end{rmk}

\begin{rmk}
Satake did not require in his original definition of orbifold atlas that there be an atlas
embedding  (a Satake embedding in our terminology) $\tU\hookrightarrow \tV$ whenever $U\subseteq V$
in the underlying space.   Instead, he required that for each point $x\in U\cap V$ there be a
chart $W$ containing $x$ with chart embeddings $\tW\hookrightarrow\tU$ and $\tW\hookrightarrow\tV$.
However, as shown in~\cite{MP}, when we require that the charts be connected and simply connected,
this condition implies that there is a Satake embedding $\tU\hookrightarrow \tV$ whenever
$U\subseteq V$ in the underlying space.
 
So when we set up the framework for our definition of orbifold atlas we had the choice to work
with more general charts and a slightly weaker compatibility condition or to work with simply
connected charts.   It is more work to prove in this context, but if one were given an atlas
with only the weaker compatibility condition (so we would take $\calO(\calU)$ to be
a subcategory of $\calO(X)$ which is not necessarily a full subcategory,  but still satisfies a weaker
condition) where the charts were connected and simply connected, then it would be possible
to extend $\CONCR$ and $\ABST$ to pseudofunctors on the full subcategory and extend $\boldrho$
to a pseudonatural transformation between them.   This result can be found in
forthcoming work by Sibih~\cite{thesis}.   In order to make the arguments on the relationship
between orbifold atlases and groupoids run more smoothly we have chosen to use the slightly
stronger compatibility condition as stated in the definition above, but all results still
work when one works with the weaker condition.
\end{rmk}

We now show the connection between our new atlas definition and the previously defined
version discussed in Section~\ref{sec-2}.

\begin{lma}\label{tilde-l}
Each element $\lambda\in{\ABST}(\mu_{ji})$ defines a concrete chart embedding $\trho_{ji}(\lambda)
= \tlambda \colon \tU_i\to\tU_j$ and a group homomorphism
$\ell_{\lambda}\colon G_i\ \to  G_j$ such that  $\ell_{\lambda}$ restricts to an isomorphism on
the kernels and 
$\tlambda$ is equivariant with respect to $\ell_{\lambda}$ in the sense that

\[\tlambda(\rho_i(g)\cdot x)=\rho_j(\ell_{\lambda}(g))\cdot\tlambda(x),\]
for all $x\in\tU_i$ and $g\in G_i$.
\end{lma}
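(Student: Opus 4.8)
The plan is to extract all the needed data from the single oplax-naturality $2$-cell $\rho_{ji}\colon\boldrho_j\otimes_{G_j}\ABST(\mu_{ji})\to\CONCR(\mu_{ji})\otimes_{G_i^{\red}}\boldrho_i$, together with the canonical identification $\CONCR(\mu_{ji})\otimes_{G_i^{\red}}\boldrho_i\cong\CONCR(\mu_{ji})$, $\kappa\otimes h\mapsto\kappa\circ h$, which is available because $\boldrho_i$ is just $G_i^{\red}$ with its regular left action (twisted on the right by $\rho_i$). Composing $\rho_{ji}$ with this identification and evaluating at $e_j^{\red}\otimes\lambda$ yields an element of $\CONCR(\mu_{ji})$, which by the very definition of $\CONCR(\mu_{ji})$ is a smooth embedding $\tU_i\to\tU_j$ with $\pi_i=\pi_j\circ\tlambda$; this unambiguous element is what we call $\tlambda=\trho_{ji}(\lambda)$. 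For the homomorphism, since $\ABST(\mu_{ji})$ is an atlas bimodule, Lemma~\ref{lem-01} supplies an injective group homomorphism $\ell_\lambda:=\Lambda_\lambda\colon G_i\to G_j$ with $\lambda\cdot g=\ell_\lambda(g)\cdot\lambda$ for every $g\in G_i$.

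The equivariance identity will come from chasing $e_j^{\red}\otimes\lambda$ through $\rho_{ji}$ after acting on the right by $g\in G_i$. Pushing $g$ onto the $\ABST$-factor and using $\lambda\cdot g=\ell_\lambda(g)\cdot\lambda$, then sliding the scalar $\ell_\lambda(g)\in G_j$ across the tensor product over $G_j$, rewrites $(e_j^{\red}\otimes\lambda)\cdot g$ as $\rho_j(\ell_\lambda(g))\cdot(e_j^{\red}\otimes\lambda)$. Applying $\rho_{ji}$ (a map of bimodules) and then the identification above, these two expressions become $\tlambda\circ\rho_i(g)$ and $\rho_j(\ell_\lambda(g))\circ\tlambda$ respectively; equating them is precisely $\tlambda(\rho_i(g)\cdot x)=\rho_j(\ell_\lambda(g))\cdot\tlambda(x)$.

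Kernel preservation is then read off from this identity. If $g\in\operatorname{Ker}(G_i)$ then $\rho_i(g)=\id_{\tU_i}$, so $\rho_j(\ell_\lambda(g))\circ\tlambda=\tlambda$; as $\tlambda$ is injective and the left $G_j^{\red}$-action on $\CONCR(\mu_{ji})$ is free, this forces $\rho_j(\ell_\lambda(g))=e_j^{\red}$, i.e.\ $\ell_\lambda(\operatorname{Ker}(G_i))\subseteq\operatorname{Ker}(G_j)$, and injectivity of $\ell_\lambda$ makes this restriction injective. For surjectivity onto $\operatorname{Ker}(G_j)$, let $k\in\operatorname{Ker}(G_j)$; since $\rho_j(k)=e_j^{\red}$ we have $e_j^{\red}\otimes(k\cdot\lambda)=e_j^{\red}\otimes\lambda$ in $\boldrho_j\otimes_{G_j}\ABST(\mu_{ji})$, hence $\rho_{ji}(e_j^{\red}\otimes(k\cdot\lambda))=\rho_{ji}(e_j^{\red}\otimes\lambda)$, and the transitivity-on-the-kernel axiom in Definition~\ref{defin-02} produces $g\in G_i$ with $\lambda\cdot g=k\cdot\lambda$. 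Then $\ell_\lambda(g)\cdot\lambda=k\cdot\lambda$, so $\ell_\lambda(g)=k$ by freeness of the left $G_j$-action on $\ABST(\mu_{ji})$; feeding $\rho_j(\ell_\lambda(g))=\rho_j(k)=e_j^{\red}$ back into the equivariance identity yields $\tlambda\circ\rho_i(g)=\tlambda$, whence $\rho_i(g)=\id_{\tU_i}$ and $g\in\operatorname{Ker}(G_i)$, exhibiting $k$ in the image. Therefore $\ell_\lambda$ restricts to a bijective homomorphism, i.e.\ an isomorphism, $\operatorname{Ker}(G_i)\to\operatorname{Ker}(G_j)$.

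The only delicate point is the tensor-product bookkeeping: keeping straight which group actions are by pre- versus post-composition, and recognizing that the ``transitivity on the kernel'' axiom is exactly what is needed for surjectivity of $\ell_\lambda$ on kernels. No analytic input is required --- all three conclusions (that $\tlambda$ is an honest chart embedding, the equivariance relation, and both directions of the kernel statement) fall out of the same short diagram chase through $\rho_{ji}$, combined with the oplax-naturality of $\boldrho$ and the freeness and transitivity properties of atlas bimodules.
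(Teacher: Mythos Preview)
Your proof is correct and follows essentially the same route as the paper's own argument: define $\tlambda$ via $\rho_{ji}(e_j^{\red}\otimes\lambda)$, take $\ell_\lambda$ from Lemma~\ref{lem-01}, derive equivariance by pushing the right $G_i$-action through $\rho_{ji}$ and across the tensor over $G_j$, and then handle the two kernel inclusions using freeness (for $\ell_\lambda(\operatorname{Ker}(G_i))\subseteq\operatorname{Ker}(G_j)$) and the transitivity-on-the-kernel axiom (for surjectivity). The only cosmetic difference is that the paper records the intermediate identities $\widetilde{\lambda\cdot g}=\tlambda\cdot\rho_i(g)$ and $\widetilde{h\cdot\lambda}=\rho_j(h)\cdot\tlambda$ separately before combining them, whereas you go directly to the equivariance formula via the identification $\CONCR(\mu_{ji})\otimes_{G_i^{\red}}\boldrho_i\cong\CONCR(\mu_{ji})$.
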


\begin{proof}
The map of bimodules
$\rho_{ji}\colon \boldrho_j\otimes_{G_j}\ABST(\mu_{ji})\Rightarrow\CONCR(\mu_{ji})
\otimes_{G_i^{\red}}\boldrho_i$ is a surjective map of sets which is equivariant with respect
to the actions of $G_i$ and $G_j^{\red}$.
The underlying set $S$ of $\boldrho_j\otimes_{G_j}{\ABST}(\mu_{ji})$ consists of elements of
the form $g\otimes\lambda$ where $g\in G_j^{\red}$ and $\lambda\in {\ABST}(\mu_{ji})$; and 
for any $k\in G_j$ we have that $g\cdot \rho_j(k)\otimes \lambda=g\otimes k\cdot\lambda$.
Since $\rho_j$ is surjective onto $G_j^{\red}$, this means that each element of $S$ can be written
in the form $e\otimes\lambda$.   However, this representation is generally not unique:
whenever $k\in \mbox{Ker}(\rho_j)$, we have that $e\otimes k\cdot\lambda=e\otimes \lambda$.

Similarly, each element of the underlying set of ${\CONCR}(\mu_{ji})\otimes_{G_i^{\red}}\boldrho_i$
can be represented as $\mu\otimes e$ for some $\mu\in {\CONCR}(\mu_{ji})$. 
Here the representation is unique: $\mu\otimes e=\mu'\otimes e$ if and only if $\mu=\mu'$.
This allows us to define a map $\trho_{ji}\colon {\ABST}(\mu_{ji})\to {\CONCR}(\mu_{ji})$ by

\begin{equation}\label{tilderho}
\rho_{ji}(e\otimes\lambda)=\trho_{ji}(\lambda)\otimes e.
\end{equation}
We write $\tlambda$ for $\trho_{ji}(\lambda)$.
So $\tlambda$ is the unique element of $\CONCR(\mu_{ji})$ such that
$\rho_{ji}(e\otimes\lambda)=\tlambda\otimes e$. 

Note that if $k\in \mbox{Ker}(\rho_j)$, we have that $\tlambda=\widetilde{k\cdot\lambda}$.
Also, the map $\trho_{ji}$ is equivariant with respect to $\rho_i$ and $\rho_j$:
For $g\in G_i$,

\begin{eqnarray*}
\widetilde{\lambda\cdot g}\otimes e &=& \rho_{ji}(e\otimes(\lambda\cdot g)) \\
& = & \rho_{ji}((e\otimes\lambda)\cdot g) \\
& = & \rho_{ji}(e\otimes\lambda)\cdot g \\
& = & (\tlambda\otimes e)\cdot g \\
& = & \tlambda\otimes g \\
& = &(\tlambda\cdot{\rho}_i(g))\otimes e
\end{eqnarray*}
so
\[\widetilde{\lambda\cdot g}=\tlambda\cdot{\rho}_i(g).\]
A similar argument shows that  for $h\in G_j$,
%
\[\widetilde{h\cdot\lambda}=\rho_j(h)\cdot\tlambda.\]

Now let $\lambda\in {\ABST}(\mu_{ji})$ be arbitrary and let $\ell_{\lambda}\colon G_i\to G_j$ 
be the induced group homomorphism defined by $\lambda\cdot g=\ell_{\lambda}(g)\cdot \lambda$ as in
Lemma~\ref{lem-01}.   Note that $\ell_{\lambda}$ sends the kernel of $\rho_i$ to the kernel of
$\rho_j$:   If $k\in \mbox{Ker}(\rho_i)$, then $\trho_{ji}(\lambda\cdot k)=\trho_{ji}(\lambda)
\cdot\rho_i(k)=\trho_{ji}(\lambda)$.  Hence, $\rho_j(\ell_{\lambda}(k))\cdot
\trho_{ji}(\lambda)=\trho_{ji}(\ell_{\lambda}(k)\cdot \lambda)=
\trho_{ji}(\lambda)$.  So $\rho_j(\ell_{\lambda}(k))\in\mbox{Ker}(\rho_j)$.

Furthermore, $\ell_{\lambda}$ is also surjective on the kernels: Let $h\in\mbox{Ker}(\rho_j)$.
Then $\rho_{ji}(e\otimes h\cdot\lambda)=\rho_{ji}(\rho_j(h)\otimes \lambda)=
\rho_{ji}(e\otimes\lambda)$,
so by condition (\ref{D:transitivity}) of Definition~\ref{defin-02}, there is an element
$g\in G_i$ such that $h\cdot\lambda=\lambda\cdot g$.  Hence, $\ell_{\lambda}(g)=h$.
We check that $g\in\mbox{Ker}(\rho_i)$:
\begin{eqnarray*}
\trho_{ji}(\lambda)\cdot\rho_i(g)&=&\trho_{ji}(\lambda\cdot g) \\
& = & \trho_{ji}(h\cdot\lambda) \\
& = & \rho_j(h)\cdot\trho_{ji}(\lambda) \\
& = & \trho_{ji}(\lambda).
\end{eqnarray*}
Since the action of $G_i^{\red}$ is free on $\tU_i$, this implies that $\rho_{i}(g)=e$.

Since the homomorphism $\ell_{\lambda}$ is injective by Lemma~\ref{lem-01} we conclude
that it restricts to an isomorphism on the kernels.
\end{proof}

\begin{rmks}\label{D:rmks}
\begin{enumerate}
 \item\label{D:prescomp}
  The operation $(\widetilde{\phantom{ooo}})$ of taking the underlying concrete embedding commutes
  with composition in the sense that for $\lambda\in\ABST(\mu_{ji})$ and $\nu\in\ABST(\mu_{kj})$,
  $\widetilde{\left(\boldalpha_{kji}(\nu\otimes\lambda)\right)}
  =\gamma_{kji}(\tilde{\nu},\tilde{\lambda})=\tnu\circ\tlambda$. We can
  see this as follows.   The compatibility of the $\rho_{ji}$ with the $\boldalpha_{kji}$ and
  $\gamma_{kji}$ from Remark~\ref{compatibility} gives us that $\rho_{ki}(e\otimes\boldalpha_{kji}
  (\nu\otimes\lambda))=\gamma_{kji}(\tnu\otimes\tlambda)\otimes e$.
  So by identity \eqref{tilderho} we get
  $\widetilde{\left(\boldalpha_{kji}(\nu\otimes\lambda)\right)}=\gamma_{kji}(\tnu\otimes
  \tlambda)$.   But $\gamma_{kji}$ is just the operation of taking the composition, so we get
  $\widetilde{\left(\boldalpha_{kji}(\nu\otimes\lambda)\right)}=\tnu\circ\tlambda$, as claimed.
  By definition of $\trho_{ki}$, this is equivalent to saying that
  $\trho_{ki}(\boldalpha_{kji}(\nu\otimes\lambda))=\tnu\circ \tlambda$.
  
 \item\label{D:surj}
  Since each $\rho_{ji}$ is surjective, the map $\trho_{ji}$ 
  sending $\lambda\in \ABST(\mu_{ji})$ to $\tlambda\in\CONCR(\mu_{ji})$ is also surjective.
  
 \item\label{D:transitivity-rephrased} Using condition (\ref{D:transitivity}) in
  Definition~\ref{defin-02} and identity \eqref{tilderho}, if $\trho_{ji}(\lambda)=
  \trho_{ji}(\lambda')$, then there is an element $g\in G_i$ such that $\lambda\cdot g = \lambda'$.
\end{enumerate}
\end{rmks}

\begin{rmks}  Any Satake atlas gives rise to a canonical atlas as in Definition~\ref{defin-02}:
we simply take $\ABST(\mu_{ji})=\CONCR(\mu_{ji})$ and $\rho_{ji}$ is defined by
$\rho_{ji}(e_j\otimes\lambda)=\lambda\otimes e_i$.   It is possible that the $\rho_{ji}$ may be given by different isomorphisms, instead of the ones just described.
However, each effective atlas is isomorphic to one coming from a Satake atlas via a
canonical isomorphism.
\end{rmks}



\begin{exa}\label{E:z3p2}
In order to make these ideas more concrete, we briefly describe the ineffective orbifold atlas
structures for the two $\bbZ/3$ orbigroupoids $\calG$ and $\calH$ described in Example~\ref{E:z3}.  

Both orbifolds consist of completely ineffective actions on the circle $S^1$.
So for both atlas structures, we may take four charts to cover $S^1$,
one for the upper semicircle and one for the lower (here denoted $U_1$ and $U_2$),
and two smaller charts embedding into the overlaps on the left and right sides
(denoted $U_3$ and $U_4$).

\[
\begin{tikzpicture}[xscale=0.7,yscale=0.7]
    \def \radiusmax {1.2}
    \def \radiusmin {1.0}mu
  
    \def \radius {1.1}
  
    \node (A1_1) at (0, \radiusmax + 0.25) {$U_1$};
    \node (A2_2) at (0, -\radiusmax - 0.15) {$U_2$};
    \node (A3_3) at (-\radiusmax - 0.35, 0) {$U_3$};
    \node (A4_2) at (\radiusmax + 0.35, 0) {$U_4$};
    
    \draw[black] ([shift=(-20:\radiusmax cm)]0,0) arc (-20:200:\radiusmax cm);
    \draw[black] ([shift=(160:\radiusmin cm)]0,0) arc (160:380:\radiusmin cm);
    \draw[black] ([shift=(-20:\radius cm)]0,0) arc (-20:20:\radius cm);
    \draw[black] ([shift=(160:\radius cm)]0,0) arc (160:200:\radius cm);
\end{tikzpicture}
\]
  
For each chart $U_i$, we have $G_{U_i} = \bbZ/3$ and $G_{U_i}^{\red} = \{e\}$.
We also have inclusions
$\lambda_{13}: U_3 \hookrightarrow U_1, \lambda_{23}: U_3 \hookrightarrow U_2,  \lambda_{14}: U_4 \hookrightarrow U_1$ and
$ \lambda_{24}:U_4 \hookrightarrow U_2$.   So for each inclusion $\mu_{ji}:U_i\hookrightarrow U_j$,
we need to define a module $\mathsf{M}_{ji}$ (with compatible left free, transitive
action by $G_{U_j}$ and right free action by $G_{U_i}$)
and a map of bimodules $\rho_{ji}$ as follows:
 
\begin{equation}\label{eq-03}
\begin{tikzpicture}[xscale=2.5,yscale=-0.8]
    \node (A0_0) at (0, 0) {$\bbZ/3= \{e, \omega_i, \omega_i^2\}$};
    \node (A0_2) at (2, 0) {$\bbZ/3= \{e, \omega_j, \omega_j^2\} $};
    \node (A2_0) at (0, 2) {$\{e\}$};
    \node (A2_2) at (2, 2) {$\{e\}$};
    
    \node (B1_1) [rotate=315] at (0.9, 1) {$\Downarrow$};
    \node (A1_1) at (1.1, 1) {$\scriptstyle{\rho_{ji}}$};
    
    \path (A0_0) edge [->]node [auto,swap] {$\scriptstyle{\boldrho_i}$} (A2_0);
    \slashing{\path (A0_0)  --node [rotate=90] {$\scriptscriptstyle{\slash}$} (A2_0);}{}

    \path (A2_0) edge [->]node [auto,swap] {$\scriptstyle{\CONCR(\mu_{ji})=\{\lambda_{ji}\}}$} (A2_2);
    \slashing{\path (A2_0)  -- node [] {$\scriptscriptstyle{\slash}$} (A2_2);}{}

    \path (A0_2) edge [->]node [auto] {$\scriptstyle{\,\boldrho_j}$} (A2_2);
    \slashing{\path (A0_2)  --node [rotate=90]{$\scriptscriptstyle{\slash}$} (A2_2);}{}

    \path (A0_0) edge [->]node [auto] {$\scriptstyle{\ABST(\mu_{ji})=\mathsf{M}_{ji}}$} (A0_2);
    \slashing{\path (A0_0)  --node [] {$\scriptscriptstyle{\slash}$} (A0_2);}{}
\end{tikzpicture}
\end{equation}

Using the description given in the proof of Lemma~\ref{tilde-l}, in this case
both the source and the target of $\rho_{ji}$ are a module with one element
and trivial actions, so we define $\rho_{ji}$ as the unique isomorphism.     Since the left
action of $G_{U_j}$ must be free and transitive,  each $M_{ji}$ must consist of three objects,
say $a_{ji},b_{ji},c_{ji}$.\\

In order to create  an atlas for the orbifold $\calG$ represented by the groupoid with three disjoint
circles in the arrow space,  we define the various bimodules $M_{ji}$ by giving the following actions:

\begin{equation}\label{eq:a}
\begin{tikzpicture}[xscale=1.5,yscale=-1.2]
    \node (B0_0) at (-2, 0) {$\textrm{left multiply by }\omega_j$};

    \node (A0_0) at (0, 0) {$a_{ji}$};
    \node (A0_1) at (1, 0) {$b_{ji}$};
    \node (A0_2) at (2, 0) {$c_{ji}$};
    \path (A0_0) edge [->]node [auto] {$\scriptstyle{}$} (A0_1);
    \path (A0_1) edge [->]node [auto] {$\scriptstyle{}$} (A0_2);
    \path (A0_2) edge [->,bend left=35]node [auto] {$\scriptstyle{}$} (A0_0);

    \node (B1_0) at (-2, 1) {$\textrm{right multiply by }\omega_i$};

    \node (A1_0) at (0, 1) {$a_{ji}$};
    \node (A1_1) at (1, 1) {$b_{ji}$};
    \node (A1_2) at (2, 1) {$c_{ji}$};
    \path (A1_0) edge [->]node [auto] {$\scriptstyle{}$} (A1_1);
    \path (A1_1) edge [->]node [auto] {$\scriptstyle{}$} (A1_2);
    \path (A1_2) edge [->,bend left=35]node [auto] {$\scriptstyle{}$} (A1_0);
\end{tikzpicture}
\end{equation}

In contrast, to create an atlas for the orbifold  $\calH$ represented by the groupoid with only two
disjoint circles in the arrow space, we set $M_{13}, M_{14} $ and $M_{23}$ to be identical
to the module described in \eqref{eq:a}.   However, for the last inclusion we
define $M_{24}$ with action given by
\begin{equation}\label{eq:b}
\begin{tikzpicture}[xscale=1.5,yscale=-1.2]
    \node (B0_0) at (-2, 0) {$\textrm{left multiply by }\omega_j$};

    \node (A0_0) at (0, 0) {$a_{ji}$};
    \node (A0_1) at (1, 0) {$b_{ji}$};
    \node (A0_2) at (2, 0) {$c_{ji}$};
    \path (A0_0) edge [->]node [auto] {$\scriptstyle{}$} (A0_1);
    \path (A0_1) edge [->]node [auto] {$\scriptstyle{}$} (A0_2);
    \path (A0_2) edge [->,bend left=35]node [auto] {$\scriptstyle{}$} (A0_0);

    \node (B1_0) at (-2, 1) {$\textrm{right multiply by }\omega_i$};

    \node (A1_0) at (0, 1) {$a_{ji}$};
    \node (A1_1) at (1, 1) {$b_{ji}$};
    \node (A1_2) at (2, 1) {$c_{ji}$};
    \path (A1_1) edge [->]node [auto] {$\scriptstyle{}$} (A1_0);
    \path (A1_2) edge [->]node [auto] {$\scriptstyle{}$} (A1_1);
    \path (A1_0) edge [->,bend right=35]node [auto] {$\scriptstyle{}$} (A1_2);
\end{tikzpicture}
\end{equation}

It is easy to see that all required compatibilities hold here.  
Note that these atlases can actually be constructed from the groupoid description
of Example~\ref{E:z3}; we will discuss this process in Section~\ref{S:atlastogroupoid}.  
\end{exa}

\subsection{Further Results}
This section lists some results that generalize properties of the concrete embeddings to
the abstract embeddings in an orbifold atlas.  

\begin{lma}\label{left-cancel}
For $\lambda\in\ABST(\mu_{kj})$ and $\mu,\mu'\in\ABST(\mu_{ji})$,
if $\lambda\otimes\mu=\lambda\otimes\mu'$ then $\mu=\mu'$.
\end{lma}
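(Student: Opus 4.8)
The plan is to combine transitivity of the left action on $\ABST(\mu_{ji})$ with freeness of the left action on the tensor product bimodule. Write $\mathsf{Q}:=\ABST(\mu_{kj})\otimes_{G_j}\ABST(\mu_{ji})$, so that $\lambda\otimes\mu,\lambda\otimes\mu'\in\mathsf{Q}$. Suppose $\lambda\otimes\mu=\lambda\otimes\mu'$. Since the left $G_j$-action on the atlas bimodule $\ABST(\mu_{ji})$ is transitive, we may write $\mu'=h\cdot\mu$ for some $h\in G_j$.

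Next I would push this $h$ over to the left-hand factor. By Lemma~\ref{lem-01} applied to the atlas bimodule $\ABST(\mu_{kj})$, there is an injective homomorphism $\Lambda_\lambda\colon G_j\to G_k$ with $\lambda\cdot h=\Lambda_\lambda(h)\cdot\lambda$. Hence
\[
\lambda\otimes\mu'=\lambda\otimes(h\cdot\mu)=(\lambda\cdot h)\otimes\mu=(\Lambda_\lambda(h)\cdot\lambda)\otimes\mu=\Lambda_\lambda(h)\cdot(\lambda\otimes\mu).
\]
So the hypothesis $\lambda\otimes\mu=\lambda\otimes\mu'$ says precisely that $\Lambda_\lambda(h)$ fixes the element $\lambda\otimes\mu$ under the left $G_k$-action on $\mathsf{Q}$. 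Now $\boldalpha_{kji}\colon\mathsf{Q}\Rightarrow\ABST(\mu_{ki})$ is an isomorphism of bimodules and $\ABST(\mu_{ki})$ is an atlas bimodule, so the left $G_k$-action on $\mathsf{Q}$ is free (and transitive). Therefore $\Lambda_\lambda(h)=e$, and since $\Lambda_\lambda$ is injective we get $h=e$, whence $\mu'=h\cdot\mu=\mu$.

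The only point needing care is the appeal to freeness of the left $G_k$-action on $\mathsf{Q}$, which I expect to be the main (mild) obstacle: it is immediate from the fact that $\mathsf{Q}$ is isomorphic, via the pseudofunctor structure cell $\boldalpha_{kji}$, to the atlas bimodule $\ABST(\mu_{ki})$. If one prefers to avoid invoking $\boldalpha_{kji}$, there is an equally short elementary route: the defining relation $(\lambda\cdot h,\mu)\sim(\lambda,h\cdot\mu)$ of the tensor product is already transitive, so it is exactly the orbit equivalence of the $G_j$-action $h\cdot(\lambda,\mu)=(\lambda\cdot h^{-1},h\cdot\mu)$ on $\ABST(\mu_{kj})\times\ABST(\mu_{ji})$. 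Then $\lambda\otimes\mu=\lambda\otimes\mu'$ produces $h\in G_j$ with $\lambda\cdot h^{-1}=\lambda$ and $h\cdot\mu=\mu'$, and freeness of the right $G_j$-action on $\ABST(\mu_{kj})$ forces $h=e$, giving $\mu=\mu'$ directly.
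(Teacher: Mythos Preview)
Your proof is correct, and both routes you outline work. However, your approach is genuinely different from the paper's.

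The paper's argument passes through the concrete embeddings: from $\lambda\otimes\mu=\lambda\otimes\mu'$ it deduces $\tlambda\circ\tmu=\tlambda\circ\tmu'$ via Remark~\ref{D:rmks}(\ref{D:prescomp}), then uses injectivity of the smooth map $\tlambda$ to get $\tmu=\tmu'$, and finally invokes the transitivity-on-the-kernel condition (Definition~\ref{defin-02}(3)(\ref{D:transitivity})) to find $g\in G_i$ with $\mu\cdot g=\mu'$; freeness of the right $G_i$-action on $\ABST(\mu_{ki})$ then forces $g=e_i$.

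Your argument is purely algebraic and never touches the oplax transformation $\boldrho$ or the concrete embeddings at all. In particular your second route --- observing that the tensor relation is already the orbit equivalence for the diagonal $G_j$-action, so that $\lambda\otimes\mu=\lambda\otimes\mu'$ directly yields $h\in G_j$ with $\lambda\cdot h=\lambda$ and $h\cdot\mu=\mu'$, whence $h=e$ by freeness of the right $G_j$-action on $\ABST(\mu_{kj})$ --- is shorter and more general: it shows that left cancellation holds in the tensor product of \emph{any} two atlas bimodules, independent of the orbifold atlas axioms beyond the bimodule conditions themselves. The paper's proof, by contrast, relies on the specific interaction between $\ABST$ and $\CONCR$ encoded in condition~(3) of Definition~\ref{defin-02}. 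What the paper's route buys is that the same machinery (passing to concrete embeddings and using transitivity on the kernel) is reused in several nearby lemmas, so it keeps the arguments uniform; what your route buys is a cleaner statement about atlas bimodules in isolation.
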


\begin{proof}
If $\lambda\otimes\mu=\lambda\otimes\mu'$ then $\boldalpha_{kji}(\lambda\otimes\mu)=
\boldalpha_{kji}(\lambda\otimes\mu')$ and hence, $\tlambda\circ\tmu=\tlambda\circ\tmu'$
by Remark~\ref{D:rmks}(\ref{D:prescomp}).   Since $\tlambda$ is injective, this implies
that $\tmu=\tmu'$, i.e., $\trho_{ji}(\mu)=\trho_{ji}(\mu')$.
By Remark~\ref{D:rmks}(\ref{D:transitivity-rephrased}), there is an element
$g\in G_i$ such that $\mu\cdot g=\mu'$.   Hence, $\lambda\otimes\mu=\lambda\otimes\mu'=
\lambda\otimes(\mu\cdot g)=(\lambda\otimes \mu)\cdot g$.  However, $G_i$ acts freely
on $\ABST(\mu_{ji})$ by condition (2) of Definition~\ref{defin-02}.   Hence, $g=e_i$ and $\mu=\mu'$.
\end{proof}

\begin{lma}\label{right-cancel}
For $\lambda,\lambda'\in\ABST(\mu_{kj})$ and $\mu\in\ABST(\mu_{ji})$,
if $\lambda\otimes\mu=\lambda'\otimes\mu$ then $\lambda=\lambda'$.
\end{lma}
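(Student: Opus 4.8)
The plan is to mimic the structure of the proof of Lemma~\ref{left-cancel}, but now the roles of the two factors are different: we are cancelling $\mu\in\ABST(\mu_{ji})$ from the right, and the obstruction to naive cancellation is the failure of $\trho_{kj}$ on the $\lambda$-factor to be injective, together with the fact that the right $G_j$-action on $\ABST(\mu_{kj})$ needs to be linked up with the left $G_j$-action on $\mu$ through the tensor relation. First I would apply $\boldalpha_{kji}$ to the hypothesis $\lambda\otimes\mu=\lambda'\otimes\mu$ and use Remark~\ref{D:rmks}(\ref{D:prescomp}) to get $\widetilde{\lambda}\circ\widetilde{\mu}=\widetilde{\lambda'}\circ\widetilde{\mu}$ in $\CONCR(\mu_{ki})$. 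Since $\widetilde{\mu}\colon\tU_i\to\tU_j$ is a smooth embedding, it is surjective onto its image and we may right-cancel it (on the level of underlying maps of spaces) to conclude $\widetilde{\lambda}=\widetilde{\lambda'}$, i.e.\ $\trho_{kj}(\lambda)=\trho_{kj}(\lambda')$.

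Next I would invoke Remark~\ref{D:rmks}(\ref{D:transitivity-rephrased}) applied to the pair $\lambda,\lambda'\in\ABST(\mu_{kj})$: since their underlying concrete embeddings agree, there is an element $h\in G_j$ with $\lambda\cdot h=\lambda'$. Substituting into the hypothesis gives $\lambda\otimes\mu=\lambda'\otimes\mu=(\lambda\cdot h)\otimes\mu=\lambda\otimes(h\cdot\mu)$, using the defining tensor relation over $G_j$. Now Lemma~\ref{left-cancel} applies directly to the equality $\lambda\otimes\mu=\lambda\otimes(h\cdot\mu)$ with both right factors in $\ABST(\mu_{ji})$, yielding $\mu=h\cdot\mu$. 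Finally, since $G_j$ acts freely and transitively on the left of $\ABST(\mu_{ji})$ (it is an atlas bimodule $G_i\longarrownot\longrightarrow G_j$, so in particular the left action is free), $\mu=h\cdot\mu$ forces $h=e_j$, and therefore $\lambda'=\lambda\cdot h=\lambda$, as desired.

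The step I expect to be the main point to get right is the reduction via the tensor relation: one must be careful that $h$ produced by transitivity-on-the-kernel-type arguments is the \emph{same} $h$ appearing in the relation $\lambda\otimes(h\cdot\mu)=(\lambda\cdot h)\otimes\mu$, and that Lemma~\ref{left-cancel} is genuinely applicable — both $\mu$ and $h\cdot\mu$ must lie in $\ABST(\mu_{ji})$, which they do since the left $G_j$-action preserves that bimodule. Everything else is formal, relying only on the already-established equivariance and injectivity properties of $\trho$ and on the freeness of the group actions built into Definition~\ref{defin-02}. One small subtlety worth double-checking is that the right-cancellation of the smooth embedding $\widetilde{\mu}$ is legitimate: this holds because $\widetilde{\mu}$ is in particular a surjection onto its image and the two maps $\widetilde{\lambda},\widetilde{\lambda'}$ agree on that image, hence are equal as elements of $\CONCR(\mu_{ki})$ (which consists of smooth maps defined on all of $\tU_i$, but equality on the image of $\widetilde{\mu}$ plus the compatibility $\pi_i=\pi_k\circ\widetilde{\lambda}\circ\widetilde{\mu}$ pins them down after pre-composing back through $\widetilde{\mu}$).
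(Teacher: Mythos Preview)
Your argument is correct in outline but takes a substantially longer detour than the paper, and the justification you give for one step is not adequate as written.

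The paper's proof is three lines and never leaves the abstract level: since the left $G_k$-action on $\ABST(\mu_{kj})$ is transitive, write $\lambda'=g\cdot\lambda$ for some $g\in G_k$; applying $\boldalpha_{kji}$ (which is left $G_k$-equivariant) to the hypothesis gives $g\cdot\boldalpha_{kji}(\lambda\otimes\mu)=\boldalpha_{kji}(\lambda\otimes\mu)$; freeness of the left $G_k$-action on $\ABST(\mu_{ki})$ forces $g=e_k$. No passage to concrete embeddings, no appeal to Lemma~\ref{left-cancel}, no transitivity-on-the-kernel.

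In your route, the step ``right-cancel $\widetilde{\mu}$ to conclude $\widetilde{\lambda}=\widetilde{\lambda'}$'' is the problematic one. From $\widetilde{\lambda}\circ\widetilde{\mu}=\widetilde{\lambda'}\circ\widetilde{\mu}$ you only know that $\widetilde{\lambda}$ and $\widetilde{\lambda'}$ agree on the open subset $\widetilde{\mu}(\tU_i)\subsetneq\tU_j$; two smooth embeddings $\tU_j\to\tU_k$ agreeing on an open subset need not agree everywhere, and your parenthetical about $\pi_i=\pi_k\circ\widetilde{\lambda}\circ\widetilde{\mu}$ (and the mislabelling of $\widetilde{\lambda},\widetilde{\lambda'}$ as elements of $\CONCR(\mu_{ki})$ rather than $\CONCR(\mu_{kj})$) does not close this. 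The clean fix is: $G_k^{\red}$ acts freely and transitively on $\CONCR(\mu_{kj})$, so $\widetilde{\lambda'}=\bar g\cdot\widetilde{\lambda}$ for some $\bar g\in G_k^{\red}$; then $\bar g\cdot(\widetilde{\lambda}\circ\widetilde{\mu})=\widetilde{\lambda}\circ\widetilde{\mu}$ in $\CONCR(\mu_{ki})$, and freeness of the $G_k^{\red}$-action there gives $\bar g=e$. But notice that this patch is exactly the paper's argument run at the concrete level --- at which point steps 3--6 of your plan become redundant. So while your approach can be made to work, it is strictly more laborious, and the part that actually carries the weight is the same free/transitive-action trick the paper uses directly on $\ABST$.
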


\begin{proof}
Since $G_k$ acts transitively on $\ABST(\mu_{kj})$, there is an element $g\in G_k$ such that
$g\cdot \lambda=\lambda'$.
This implies that $g\cdot\boldalpha_{kji}(\lambda\otimes\mu)=\boldalpha_{kji}(\lambda\otimes\mu)$.
Since the action of $G_k$ on $\ABST(\mu_{ki})$ is free, this implies that $g=e_k$ and hence,
$\lambda=\lambda'$.
\end{proof}

\begin{lma}\label{interpolation}
If $U_i\le U_j\le U_k$ in $\calO(\calU)$ and $\lambda\in\ABST(\mu_{ki})$, $\nu\in\ABST(\mu_{kj})$
with $\tlambda(\tU_i)\cap\tnu(\tU_j)\neq \varnothing$, then there is a unique
$\kappa\in \ABST(\mu_{ji})$ such that $\boldalpha_{kji}(\nu\otimes\kappa)=\lambda$.
\end{lma}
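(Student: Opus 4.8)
The plan is to solve the problem first at the level of concrete embeddings, then lift that solution to an abstract embedding, and finally adjust it by a suitable element of $G_i$ so that the equality becomes exact rather than merely up to the kernel.

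First I would pass to underlying concrete embeddings, writing $\tlambda=\trho_{ki}(\lambda)\in\CONCR(\mu_{ki})$ and $\tnu=\trho_{kj}(\nu)\in\CONCR(\mu_{kj})$. Since the reduced charts form a Satake atlas, $\tU_i$ is connected and simply connected, and $\tlambda(\tU_i)\cap\tnu(\tU_j)\neq\varnothing$ by hypothesis, Lemma~\ref{factorization} (applied with $\lambda_{ki}:=\tlambda$ and $\lambda_{kj}:=\tnu$) provides a concrete embedding $\lambda_{ji}\colon\tU_i\to\tU_j$ with $\tnu\circ\lambda_{ji}=\tlambda$. Because $\trho_{ji}$ is surjective (Remark~\ref{D:rmks}(\ref{D:surj})), I can then choose some $\kappa_0\in\ABST(\mu_{ji})$ with $\widetilde{\kappa_0}=\lambda_{ji}$.

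Next I would compare $\boldalpha_{kji}(\nu\otimes\kappa_0)$ with $\lambda$. By Remark~\ref{D:rmks}(\ref{D:prescomp}) we have $\trho_{ki}\bigl(\boldalpha_{kji}(\nu\otimes\kappa_0)\bigr)=\tnu\circ\widetilde{\kappa_0}=\tnu\circ\lambda_{ji}=\tlambda=\trho_{ki}(\lambda)$, so $\boldalpha_{kji}(\nu\otimes\kappa_0)$ and $\lambda$ have the same image under $\trho_{ki}$. Hence by Remark~\ref{D:rmks}(\ref{D:transitivity-rephrased}) there is $g\in G_i$ with $\boldalpha_{kji}(\nu\otimes\kappa_0)\cdot g=\lambda$. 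Setting $\kappa:=\kappa_0\cdot g$ and using that $\boldalpha_{kji}$ is a map of bimodules together with the identity $\nu\otimes(\kappa_0\cdot g)=(\nu\otimes\kappa_0)\cdot g$ in the tensor product, I obtain $\boldalpha_{kji}(\nu\otimes\kappa)=\boldalpha_{kji}(\nu\otimes\kappa_0)\cdot g=\lambda$, which gives existence.

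For uniqueness, suppose $\kappa,\kappa'\in\ABST(\mu_{ji})$ both satisfy $\boldalpha_{kji}(\nu\otimes\kappa)=\boldalpha_{kji}(\nu\otimes\kappa')=\lambda$. Since $\boldalpha_{kji}$ is an isomorphism of bimodules, it is injective, so $\nu\otimes\kappa=\nu\otimes\kappa'$ in $\ABST(\mu_{kj})\otimes_{G_j}\ABST(\mu_{ji})$, and Lemma~\ref{left-cancel} then yields $\kappa=\kappa'$. I expect the only delicate point to be the correction step: one must be sure that the element $g\in G_i$ coming from transitivity on the kernel can be absorbed into $\kappa_0$ via the right $G_i$-action on the tensor product, which is exactly where bimodule-equivariance of $\boldalpha_{kji}$ is used; everything else is a formal consequence of Lemma~\ref{factorization}, the surjectivity and composition-compatibility of $\trho$, and Lemma~\ref{left-cancel}.
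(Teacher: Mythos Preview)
Your proof is correct and follows essentially the same route as the paper's own argument: factor at the concrete level via Lemma~\ref{factorization}, lift using surjectivity of $\trho_{ji}$, compare under $\trho_{ki}$ via Remark~\ref{D:rmks}(\ref{D:prescomp}), correct by an element of $G_i$ using Remark~\ref{D:rmks}(\ref{D:transitivity-rephrased}), and deduce uniqueness from the injectivity of $\boldalpha_{kji}$ together with Lemma~\ref{left-cancel}. Your write-up is in fact slightly cleaner in that you consistently write $\trho_{ki}$ where the paper has a small slip.
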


\begin{proof}
First note that by Lemma~\ref{factorization}
there is a unique element $\theta\in \CONCR(\mu_{ji})$ such that $\tnu\circ\theta=
\tlambda$.  By Remark~\ref{D:rmks}(\ref{D:surj}), there is an element $\kappa'\in \ABST(\mu_{ji})$
such that $\tkappa'=\theta$.  
Then by Remark~\ref{D:rmks}(\ref{D:prescomp}) we have
$\trho_{ji}(\boldalpha_{kji}(\nu\otimes\kappa'))=\tnu\circ\tkappa'=\tnu\circ\theta=
\tlambda=\trho_{ji}(\lambda)$.  So using Remark~\ref{D:rmks}(\ref{D:transitivity-rephrased})
there is an element $g\in G_i$ such that 
$\boldalpha_{kji}(\nu\otimes(\kappa'\cdot g))=\boldalpha_{kji}(\nu\otimes\kappa')\cdot g=\lambda$.
Let $\kappa=\kappa'\cdot g$.  The fact that $\kappa$ is unique with this property follows
from Lemma~\ref{left-cancel} together with the fact that $\boldalpha_{kji}$
is an isomorphism by condition (2) of Definition~\ref{defin-02}.
\end{proof}

The strong local compatibility property for concrete embeddings of atlas charts extends to abstract embeddings
in the following way.

\begin{lma}\label{strong_compatibility}
For any charts $\tU_1,\tU_2,\tU_3$ with $U_1\le U_3$ and $U_2\le U_3$ in $\calO(\calU)$ 
with points $x_i\in\tU_i$ for $i=1,2,3$ and abstract embeddings $\lambda_{3i}\in\ABST(\mu_{3i})$
such that $\tlambda_{3i}(x_i)=x_3$ for $i=1,2$, there is a chart $\tU_4$ with a point
$y\in\tU_4$ and abstract embeddings $\kappa_i\in\ABST(\mu_{i4})$
such that $\tkappa_{i4}(y)=x_i$ for $i=1,2$ and $\boldalpha_{314}(\lambda_{31}\otimes
\kappa_{14})=\boldalpha_{324}(\lambda_{32}\otimes\kappa_{24})$.
\end{lma}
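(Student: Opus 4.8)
The plan is to reduce the statement to the concrete (Satake) case via the operation $(\widetilde{\phantom{o}})$ of passing to underlying concrete embeddings, apply the strong local compatibility lemma there (Lemma~\ref{strong_comp}), and then lift the resulting chart, point, and concrete embeddings back up to abstract embeddings using surjectivity of $\trho$ and the interpolation lemma (Lemma~\ref{interpolation}).

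First I would pass to the reduced picture. From $\lambda_{3i}\in\ABST(\mu_{3i})$ we obtain $\widetilde{\lambda_{3i}}\in\CONCR(\mu_{3i})$, i.e.\ concrete embeddings $\widetilde{\lambda}_{3i}\colon\tU_i\to\tU_3$ with $\widetilde{\lambda}_{3i}(x_i)=x_3$ for $i=1,2$. Applying Lemma~\ref{strong_comp} to the Satake atlas $\calU^{\red}$ gives a chart $\tU_4$ in $\calU$, concrete embeddings $\theta_i\in\CONCR(\mu_{i4})$ for $i=1,2$, and a point $y\in\tU_4$ such that $\theta_i(y)=x_i$ and $\widetilde{\lambda}_{31}\circ\theta_1=\widetilde{\lambda}_{32}\circ\theta_2$. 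Next I would lift each $\theta_i$ to an abstract embedding: by Remark~\ref{D:rmks}(\ref{D:surj}) the map $\trho_{i4}\colon\ABST(\mu_{i4})\to\CONCR(\mu_{i4})$ is surjective, so choose $\kappa_{i4}\in\ABST(\mu_{i4})$ with $\widetilde{\kappa}_{i4}=\theta_i$; then automatically $\widetilde{\kappa}_{i4}(y)=\theta_i(y)=x_i$, as required.

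It remains to arrange the compatibility $\boldalpha_{314}(\lambda_{31}\otimes\kappa_{14})=\boldalpha_{324}(\lambda_{32}\otimes\kappa_{24})$, and here is the one place that needs care: the lifts $\kappa_{i4}$ are only determined up to the free right $G_i$-action, so the equality will not hold on the nose for arbitrary choices. My plan is to fix $\kappa_{24}$ once and for all (any lift of $\theta_2$), set $\lambda:=\boldalpha_{324}(\lambda_{32}\otimes\kappa_{24})\in\ABST(\mu_{31})$, and then use interpolation to produce the correct $\kappa_{14}$. Concretely, applying $(\widetilde{\phantom{o}})$ and Remark~\ref{D:rmks}(\ref{D:prescomp}) gives $\widetilde{\lambda}=\widetilde{\lambda}_{32}\circ\theta_2=\widetilde{\lambda}_{31}\circ\theta_1$, so $\widetilde{\lambda}(\tU_4)$ meets $\widetilde{\lambda}_{31}(\tU_1)$; since $U_4\le U_1\le U_3$, Lemma~\ref{interpolation} (with the roles $i\mapsto 4$, $j\mapsto 1$, $k\mapsto 3$) yields a unique $\kappa_{14}\in\ABST(\mu_{14})$ with $\boldalpha_{314}(\lambda_{31}\otimes\kappa_{14})=\lambda=\boldalpha_{324}(\lambda_{32}\otimes\kappa_{24})$, which is exactly the desired identity.

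Finally I would check that this $\kappa_{14}$ still sends $y$ to $x_1$. Applying $(\widetilde{\phantom{o}})$ to $\boldalpha_{314}(\lambda_{31}\otimes\kappa_{14})=\lambda$ and using Remark~\ref{D:rmks}(\ref{D:prescomp}) gives $\widetilde{\lambda}_{31}\circ\widetilde{\kappa}_{14}=\widetilde{\lambda}=\widetilde{\lambda}_{31}\circ\theta_1$; since $\widetilde{\lambda}_{31}$ is injective, $\widetilde{\kappa}_{14}=\theta_1$, hence $\widetilde{\kappa}_{14}(y)=x_1$ as needed. (One could also note that $\widetilde{\kappa}_{14}$ and $\theta_1$ differ a priori by an element of $G_1^{\red}$ acting on $\tU_1$, but injectivity of $\widetilde{\lambda}_{31}$ forces that element to be trivial.) The main obstacle is precisely the bookkeeping in the previous paragraph — the abstract embeddings are torsors, so the equality of composites cannot be imposed by naive choice and must be routed through Lemma~\ref{interpolation}; everything else is a direct transcription of the effective-case argument through the surjection $\trho$.
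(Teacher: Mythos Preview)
Your proof is correct and follows essentially the same strategy as the paper: reduce to the concrete case via $(\widetilde{\phantom{o}})$, apply Lemma~\ref{strong_comp}, lift via surjectivity of $\trho$, and then correct one of the lifts to force the compatibility $\boldalpha_{314}(\lambda_{31}\otimes\kappa_{14})=\boldalpha_{324}(\lambda_{32}\otimes\kappa_{24})$. The only minor difference is in the correction step: the paper lifts both $\theta_i$ to $\theta_{i4}$, observes $\trho_{34}(\boldalpha_{314}(\lambda_{31}\otimes\theta_{14}))=\trho_{34}(\boldalpha_{324}(\lambda_{32}\otimes\theta_{24}))$, and invokes Remark~\ref{D:rmks}(\ref{D:transitivity-rephrased}) directly to find $g\in G_4$ with $\kappa_{14}:=\theta_{14}\cdot g$, whereas you fix $\kappa_{24}$ and route the correction through Lemma~\ref{interpolation} --- which is a clean packaging of exactly the same transitivity argument.
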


\begin{proof}
By Lemma~\ref{strong_comp} there is a chart $\tU_4$ in the atlas $\calU$ with
$U_4\subseteq U_1\cap U_2$, and concrete embeddings $\nu_{i4}\in\CONCR(\mu_{4i})$ for $i=1,2$,
such that $\tlambda_{31}\circ\nu_{14}=\tlambda_{32}\circ\nu_{24}$.   Furthermore, there is
a point $y\in\tU_4$, such that $\nu_{i4}(y)=x_i$ for $i=1,2$.
Since $\trho_{14}$ and $\trho_{24}$ are surjective by Remark~\ref{D:rmks}(\ref{D:surj}),
there are abstract embeddings $\theta_{i4}$ with $\ttheta_{i4}=\nu_{i4}$ for $i=1,2$.   Then 

\begin{eqnarray*}
\trho_{34}(\boldalpha_{314}(\lambda_{31}\otimes\theta_{14})) & = &
   \tlambda_{31}\circ \ttheta_{14} \\
& = &\tlambda_{31}\circ\nu_{14} \\
& = &\tlambda_{32}\circ\nu_{24} \\
& = &\tlambda_{32}\circ\ttheta_{24} \\
& = &\trho_{34}(\boldalpha_{324}(\lambda_{32}\otimes\theta_{24})).
\end{eqnarray*}
By Remark~\ref{D:rmks}(\ref{D:transitivity-rephrased})
there is an element $g\in G_4$ such that $\boldalpha_{314}(\lambda_{31}\otimes
\theta_{14})\cdot g=\boldalpha_{324}(\lambda_{32}\otimes\theta_{24})$ and hence,
$\boldalpha_{314}(\lambda_{31}\otimes\theta_{14}\cdot g)=
\boldalpha_{324}(\lambda_{32}\otimes\theta_{24})$.

If we define $\kappa_{14}=\theta_{14}\cdot g$ and $\kappa_{24}=\theta_{24}$, then we immediately have
that $\tkappa_{24}(y)=x_2$ and $\boldalpha_{314}(\lambda_{31}\otimes
\kappa_{14})=\boldalpha_{324}(\lambda_{32}\otimes\kappa_{24})$.   These properties imply that
\begin{eqnarray*}
\tlambda_{31}\circ\tkappa_{14}(y) & = & \tlambda_{32}\circ\tkappa_{24}(y) \\
& = & \tlambda_{32}(x_2) \\
& = & x_3 \\
& = & \tlambda_{31}(x_1).
\end{eqnarray*}
Since $\tlambda_{31}$ is injective,  $\tkappa_{14}(y)= x_1$.
So $\kappa_{14}$ and $\kappa_{24}$ have the required property.
\end{proof}
  
\section{Constructing a Groupoid from an Orbifold Atlas}\label{sec-5}

Let $\mathfrak{U}$ be an orbifold atlas as in  Definition~\ref{defin-02};
to fix notation, say that we have open sets $U_i$ with inclusions given by $\mu_{ji}: U_i \to U_j$,
and atlas bimodules $\ABST(\mu_{ji}): G_i \longarrownot\longrightarrow G_j$, with pseudofunctor
structure defined by identity isomorphisms $\boldalpha_i: \mathsf{G}_i\Rightarrow \ABST(\mu_{ii})$ for each $U_i$ in $\calO(\calU)$
and composition isomorphisms $\boldalpha_{kji}\colon{\ABST}(\mu_{kj})\otimes{\ABST}(\mu_{ji})
\Rightarrow{\ABST}(\mu_{ki})$ for each composable pair of arrows
$U_i\stackrel{\mu_{ji}}{\to}U_j\stackrel{\mu_{kj}}{\to}U_k$ in $\calO(\calU)$. 
In order to obtain an \'etale groupoid representation of this orbifold, we will first construct
a smooth category $\CU$ for which the arrows do not necessarily have inverses, and then construct
its smooth groupoid of fractions by an internal version of the Gabriel-Zisman construction
as described in~\cite{PSi}.

The smooth category $\CU$ has space of objects defined by the disjoint union of the charts:

\[\CU_0=\coprod_{\tU_i\in\,\calU}\tU_i.\]
The space of arrows $\CU_1$ is constructed using the atlas bimodules $\ABST$, where
each $\ABST(\mu_{ji})$ has the discrete topology:

\[\CU_1=\coprod_{\mu_{ji}\in\calO(\calU)}\ABST(\mu_{ji})\times \tU_i.\]
The source map is defined by the projection $s(\lambda,x)=x\in \tU_i,$
and the target map uses the concrete embedding $\tlambda$ described in Lemma~\ref{tilde-l},
$t(\lambda,x)=\tlambda(x)\in\tU_j$.
The unit map $u\colon \CU_0\to\CU_1$ is given by 
$u(x)=(\boldalpha_i(e_i),x)$ for $x\in\tU_i$.

We define the composition $m\colon \CU_1\times_{\CU_0}\CU_1\to \CU_1$ in  $\CU$ using the
composition isomorphisms $\boldalpha_{kji}$ of $\ABST$:  
Suppose that $(\lambda, x) \in \ABST(\mu_{ji}) \times \tU_i$ and $(\lambda', x')
\in \ABST(\mu_{kj}) \times \tU_j$, such that  $t(\lambda,x)=s(\lambda',x')$; then we know that
$x'=\tlambda(x)$ and we define
\[m((\lambda',x'),(\lambda,x))=(\boldalpha_{kji}(\lambda'\otimes\lambda),x).\]
This gives us a smooth category with structure maps that are \'etale, since they are embeddings
when restricted to any connected component of the space of arrows.  

Next, we want to construct a smooth groupoid $\GU$ from the smooth category $\CU$.  We do this
using a category of fractions construction.  For ordinary categories, the groupoid of fractions 
can be constructed using the Gabriel-Zisman span construction whenever the following two 
conditions hold:

\begin{itemize}
 \item{(O)}  {\it The Ore condition:}  for each cospan of arrows
  \begin{tikzpicture}[baseline, xscale=1.2]
    \node [anchor=base] (A0_0) at (0, 0) {$A$};
    \node [anchor=base] (A0_1) at (1, 0) {$C$};
    \node [anchor=base] (A0_2) at (2, 0) {$B$};
    \path (A0_0) edge [->]node [auto] {$\scriptstyle{f}$} (A0_1);
    \path (A0_2) edge [->]node [auto,swap] {$\scriptstyle{w}$} (A0_1);
  \end{tikzpicture}
  there are arrows to complete this to a commutative square
  
  \[
  \begin{tikzpicture}[xscale=0.7,yscale=-0.7]
    \node (A0_0) at (0, 0) {$D$};
    \node (A0_2) at (2, 0) {$B$};
    \node (A2_0) at (0, 2) {$A$};
    \node (A2_2) at (2, 2) {$C$};
    \path (A0_0) edge [->]node [auto,swap] {$\scriptstyle{v}$} (A2_0);
    \path (A0_0) edge [->]node [auto] {$\scriptstyle{g}$} (A0_2);
    \path (A0_2) edge [->]node [auto] {$\scriptstyle{w}$} (A2_2);
    \path (A2_0) edge [->]node [auto,swap] {$\scriptstyle{f}$} (A2_2);
  \end{tikzpicture}
  \]
 
 \item{(WC)} {\it  Weak cancellability:} for any arrows
 \begin{tikzpicture}[baseline, xscale=1.2]
    \def \z {15}
    \node [anchor=base] (C) at (0, 0) {$C$};
    \node [anchor=base] (B) at (1, 0) {$B$};
    \node [anchor=base] (A) at (2, 0) {$A$};
    \path (C.\z) edge [->]node [auto] {$\scriptstyle{f}$} (B.180-\z);
    \path (C.360-\z) edge [->]node [auto,swap] {$\scriptstyle{g}$} (B.180+\z);
    \path (B) edge [->]node [auto] {$\scriptstyle{h}$} (A);
 \end{tikzpicture} 
  for which $h f = h g$,  there is an arrow $j: D \to C$  such that $fj=gj$.
\end{itemize}
In the Gabriel-Zisman groupoid of fractions, each arrow is represented by a span
\begin{tikzpicture}[baseline, xscale=1]
    \node [anchor=base] (A) at (0, 0) {};
    \node [anchor=base] (B) at (1, 0) {};
    \node [anchor=base] (C) at (2, 0) {};
    \path (B) edge [->]node [auto,swap] {$\scriptstyle{f}$} (A);
    \path (B) edge [->]node [auto] {$\scriptstyle{g}$} (C);
\end{tikzpicture} 
in the original category, and two such spans,
\begin{tikzpicture}[baseline, xscale=1]
  \node [anchor=base] (A) at (0, 0) {};
    \node [anchor=base] (B) at (1, 0) {};
    \node [anchor=base] (C) at (2, 0) {};
    \path (B) edge [->]node [auto,swap] {$\scriptstyle{f_1}$} (A);
    \path (B) edge [->]node [auto] {$\scriptstyle{g_1}$} (C);
\end{tikzpicture} 
and
\begin{tikzpicture}[baseline, xscale=1]
    \node [anchor=base] (A) at (0, 0) {};
    \node [anchor=base] (B) at (1, 0) {};
    \node [anchor=base] (C) at (2, 0) {};
    \node [anchor=base] (D) at (2, 0) {,};
    \path (B) edge [->]node [auto,swap] {$\scriptstyle{f_2}$} (A);
    \path (B) edge [->]node [auto] {$\scriptstyle{g_2}$} (C);
\end{tikzpicture}
represent the same arrow when there is a third span
\begin{tikzpicture}[baseline, xscale=1]
    \node [anchor=base] (A) at (0, 0) {};
    \node [anchor=base] (B) at (1, 0) {};
    \node [anchor=base] (C) at (2, 0) {};
    \path (B) edge [->]node [auto,swap] {$\scriptstyle{h_1}$} (A);
    \path (B) edge [->]node [auto] {$\scriptstyle{h_2}$} (C);
\end{tikzpicture} 
to make the following diagram commute,

\begin{equation}\label{spansofspans}
\begin{tikzpicture}[xscale=1.5,yscale=-0.4]
    \node (A0_1) at (1, 0) {};
    \node (A2_0) at (0, 2) {};
    \node (A2_1) at (1, 2) {};
    \node (A2_2) at (2, 2) {};
    \node (A4_1) at (1, 4) {};
    \path (A2_1) edge [->]node [auto] {$\scriptstyle{h_2}$} (A4_1);
    \path (A4_1) edge [->]node [auto] {$\scriptstyle{f_2}$} (A2_0);
    \path (A4_1) edge [->]node [auto,swap] {$\scriptstyle{g_2}$} (A2_2);
    \path (A0_1) edge [->]node [auto,swap] {$\scriptstyle{f_1}$} (A2_0);
    \path (A2_1) edge [->]node [auto,swap] {$\scriptstyle{h_1}$} (A0_1);
    \path (A0_1) edge [->]node [auto] {$\scriptstyle{g_1}$} (A2_2);
\end{tikzpicture}
\end{equation}
This relation is an equivalence relation and defines a congruence on the arrow structure when
the category satisfies the conditions (O) and (WC).

In our case, we want to ensure that the resulting groupoid carries an induced topological
structure.   Hence, we want to use the analogous construction internal to the category of
topological spaces as described in~\cite{PSi}.   The construction described there starts
with a topological category and gives topological versions of the (O) and (WC) conditions.
It then considers the  space of spans of arrows in $\CU$, constructed as the pullback
$\CU_1\times_{\CU_0}\CU_1$ of $s$ along $s$, and a space encoding diagrams of the form
\eqref{spansofspans}.   This latter space has two obvious projection maps to
$\CU_1\times_{\CU_0}\CU_1$.   Then  $\GU_1$ is obtained as the coequalizer of these projection
maps, and they form its kernel pair.   On the point level this still implies that the relation
described here is an equivalence relation on the space of spans.

We also want the resulting groupoid $\GU$ to inherit the smooth structure of $\CU$.  
However, working out the corresponding conditions inside the category of smooth manifolds is
a bit tricky, as not all pullbacks exist there.
Fortunately, we can work with the  topological version;   we will show that the spaces we obtain
have induced smooth structures that make all structure maps local diffeomorphisms.  

Rather than reviewing the general construction from~\cite{PSi} in detail, we will just focus on 
what it means for our atlas category $\CU$.
The condition corresponding to the (O) condition above requires us to consider the spaces  
of cospans of arrows and of commutative squares  in $\CU$.  

Since the space of arrows $\CU$ is defined by the disjoint union of charts,  the space
$\mbox{ Cospan}\,(\mathcal{C}(\mathfrak{U}))=\CU_1\times_{t,\CU_0,t}\CU_1$ representing the cospans of arrows
is a coproduct of pullbacks:  for each pair  $\lambda\in\ABST(\mu_{ki})$ and
$\xi\in \ABST(\mu_{kj})$, we create the pullback

\[
\begin{tikzpicture}[xscale=0.9,yscale=-0.8]
    \node (A0_0) at (0, 0) {$P(\lambda,\xi)$};
    \node (A0_2) at (2, 0) {$\tU_j$};
    \node (A2_0) at (0, 2) {$\tU_i$};
    \node (A2_2) at (2, 2) {$\tU_k$};
    \path (A0_0) edge [->]node [auto] {$\scriptstyle{}$} (A2_0);
    \path (A0_0) edge [->]node [auto] {$\scriptstyle{}$} (A0_2);
    \path (A0_2) edge [->]node [auto] {$\scriptstyle{\txi}$} (A2_2);
    \path (A2_0) edge [->]node [auto,swap] {$\scriptstyle{\tlambda}$} (A2_2);
\end{tikzpicture}
\]
Then 

\[\mbox{ Cospan}\,(\mathcal{C}(\mathfrak{U})) = 
                  \coprod_{\substack{\mu_{ki},\mu_{kj}\in\calO(\calU) \\
                                     \lambda\in\ABST(\mu_{ki}) \\
                                      \xi\in\ABST(\mu_{kj})}
                          } P(\lambda,\xi).\]
Note that since both $\tlambda$ and $\txi$ are open embeddings, the space $P(\lambda,\xi)$
is homeomorphic to $\tlambda(\tU_i)\cap\txi(\tU_j)\subseteq \tU_k$ and hence carries
a canonical smooth structure.

In the general theory of internal categories of fractions, the space
$\mbox{ ComSq}\,(\mathcal{C}(\mathfrak{U}))$ representing commutative squares is obtained as an equalizer
of composition maps from the space of  all possible squares to  $\CU_1$.   In this case,
the space encoding all possible squares is a large coproduct of charts,
\[\coprod \tU_\ell,\]
taken over all combinations of $(\lambda,\gamma,\xi,\delta)\in\ABST(\mu_{ki})
\times\ABST(\mu_{i\ell})\times\ABST(\mu_{kj})\times\ABST(\mu_{j\ell})$. 
The two composition maps send a point $(x,\lambda,\gamma,\xi,\delta)$ to
$(x,\boldalpha_{ki\ell}(\lambda\otimes\gamma))$ and $(x,\boldalpha_{kj\ell}(\xi\otimes\delta))$
respectively.   So we see that the space of commutative squares is a coproduct of equalizers. 
Moreover, in our situation the two parallel embeddings $\boldalpha_{ki\ell}(\lambda\otimes\gamma)$
and $\boldalpha_{kj\ell}(\xi\otimes\delta)$ either agree everywhere or nowhere, and hence
the space becomes simply a coproduct of charts:
\[\mbox{ ComSq}\,(\mathcal{C}(\mathfrak{U}))=\coprod\tU_\ell\]
where the coproduct is taken over all $(\lambda,\gamma,\xi,\delta)\in
\ABST(\mu_{ki})\times\ABST(\mu_{i\ell})\times\ABST(\mu_{kj})\times\ABST(\mu_{j\ell}) $ such that
$\boldalpha_{ki\ell}(\lambda\otimes\gamma)=\boldalpha_{kj\ell}(\xi\otimes\delta)$.  

There is a projection map $\varphi\colon \mbox{ ComSq}\,(\mathcal{C}(\mathfrak{U}))\to \mbox{ Cospan}\,(\mathcal{C}(\mathfrak{U})))$
given by
\[\varphi(x,\lambda,\gamma,\xi,\delta) = (\lambda,\tgamma(x),\tdelta(x),\xi).\]
Following~\cite{PSi}, we show that  the internal version of the Ore condition is satisfied in 
$\CU$:

\begin{lma}[The Topological Ore Condition for $\CU$]
The map $\varphi\colon \mbox{ ComSq}\,(\mathcal{C}(\mathfrak{U}))\to \mbox{ Cospan}\,(\mathcal{C}(\mathfrak{U}))$
is a surjective local homeomorphism.
\end{lma}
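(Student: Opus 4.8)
The plan is to verify the two asserted properties of $\varphi$ separately: that it is surjective, and that it is a local homeomorphism. Both reductions will exploit the fact that, by construction, $\mbox{ComSq}\,(\mathcal{C}(\mathfrak{U}))$ and $\mbox{Cospan}\,(\mathcal{C}(\mathfrak{U}))$ are each coproducts, the first of charts $\tU_\ell$ (indexed by commuting squares $(\lambda,\gamma,\xi,\delta)$) and the second of pullbacks $P(\lambda,\xi)$, each of which we have already identified with the open set $\tlambda(\tU_i)\cap\txi(\tU_j)\subseteq\tU_k$ and hence with a smooth manifold. So it suffices to understand, for a fixed cospan $(\lambda,\xi)$ with $\lambda\in\ABST(\mu_{ki})$ and $\xi\in\ABST(\mu_{kj})$, the restriction of $\varphi$ to the sub-coproduct of $\mbox{ComSq}$ lying over $P(\lambda,\xi)$.

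\medskip
\noindent\textbf{Surjectivity.} First I would take an arbitrary point of $\mbox{Cospan}\,(\mathcal{C}(\mathfrak{U}))$, i.e.\ a point of some $P(\lambda,\xi)$, which we may think of as a point $x_k\in\tlambda(\tU_i)\cap\txi(\tU_j)\subseteq\tU_k$. Let $x_i\in\tU_i$ and $x_j\in\tU_j$ be the unique preimages, so $\tlambda(x_i)=x_k=\txi(x_j)$. This is exactly the hypothesis of Lemma~\ref{strong_compatibility} (with the roles $U_1=U_i$, $U_2=U_j$, $U_3=U_k$): it produces a chart $\tU_\ell$, a point $y\in\tU_\ell$, and abstract embeddings $\kappa_{i\ell}\in\ABST(\mu_{i\ell})$, $\kappa_{j\ell}\in\ABST(\mu_{j\ell})$ with $\tkappa_{i\ell}(y)=x_i$, $\tkappa_{j\ell}(y)=x_j$, and $\boldalpha_{ki\ell}(\lambda\otimes\kappa_{i\ell})=\boldalpha_{kj\ell}(\xi\otimes\kappa_{j\ell})$. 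The tuple $(\lambda,\kappa_{i\ell},\xi,\kappa_{j\ell})$ thus indexes a component of $\mbox{ComSq}$, and $(y,\lambda,\kappa_{i\ell},\xi,\kappa_{j\ell})$ is a point there; applying $\varphi$ and using $\tkappa_{i\ell}(y)=x_i$, $\tkappa_{j\ell}(y)=x_j$, these being the unique preimages of $x_k$, we get the original point back. Hence $\varphi$ is surjective.

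\medskip
\noindent\textbf{Local homeomorphism.} Fix a component of $\mbox{ComSq}$, say the one indexed by a commuting square $(\lambda,\gamma,\xi,\delta)$, so this component is a single chart $\tU_\ell$; the restriction of $\varphi$ to it is the map $\tU_\ell\to P(\lambda,\xi)$ sending $x\mapsto(\lambda,\tgamma(x),\tdelta(x),\xi)$, which under the identifications becomes $x\mapsto\tgamma(x)\in\tU_\ell$ viewed inside $\tU_k$ via either leg. Since $\gamma\in\ABST(\mu_{i\ell})$, the associated concrete embedding $\tgamma\colon\tU_\ell\to\tU_i$ is a smooth open embedding (Lemma~\ref{tilde-l}), so $\tlambda\circ\tgamma\colon\tU_\ell\to\tU_k$ is a smooth open embedding whose image lands in $\tlambda(\tU_i)\cap\txi(\tU_j)\cong P(\lambda,\xi)$; one checks the image is exactly $\tlambda(\tgamma(\tU_\ell))\cap\txi(\tdelta(\tU_\ell))$ (an open subset), and $\tlambda\circ\tgamma$ is a homeomorphism onto it. Hence $\varphi$ restricted to each component of its source is an open topological embedding, which is precisely what it means to be a local homeomorphism. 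Combining with surjectivity gives the claim.

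\medskip
\noindent\textbf{The main obstacle} I anticipate is the surjectivity half, and specifically making sure that the chart $\tU_\ell$ and embeddings produced by Lemma~\ref{strong_compatibility} genuinely assemble into a point of $\mbox{ComSq}$ as \emph{defined} here — i.e.\ that the indexing tuple $(\lambda,\kappa_{i\ell},\xi,\kappa_{j\ell})$ satisfies the coproduct condition $\boldalpha_{ki\ell}(\lambda\otimes\kappa_{i\ell})=\boldalpha_{kj\ell}(\xi\otimes\kappa_{j\ell})$ on the nose (not just after an action of $G_\ell$); Lemma~\ref{strong_compatibility} is stated to give exactly this equality, so the point is to cite it with the correct relabeling of indices and to track the ``unique preimage'' bookkeeping so that $\varphi$ of the constructed point is literally the given cospan point. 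The local-homeomorphism half is essentially formal once one recalls that each $\tlambda,\txi,\tgamma$ is an open embedding, so the only care needed there is to confirm the images match up after passing through the homeomorphism $P(\lambda,\xi)\cong\tlambda(\tU_i)\cap\txi(\tU_j)$.
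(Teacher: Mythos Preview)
Your proposal is correct and follows essentially the same approach as the paper: the paper's proof is a two-sentence sketch stating that $\varphi$ is an embedding on each connected component (hence a local homeomorphism) and that Lemma~\ref{strong_compatibility} gives surjectivity, and you have simply unpacked both of these assertions with care. One minor remark: in your local-homeomorphism paragraph, since $\boldalpha_{ki\ell}(\lambda\otimes\gamma)=\boldalpha_{kj\ell}(\xi\otimes\delta)$ we have $\tlambda\circ\tgamma=\txi\circ\tdelta$ by Remark~\ref{D:rmks}(\ref{D:prescomp}), so the image $\tlambda(\tgamma(\tU_\ell))$ already equals $\txi(\tdelta(\tU_\ell))$ and your intersection is redundant.
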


\begin{proof}
All the $\tlambda$s
are embeddings and the map $\varphi$  is an embedding on each connected component, and so
$\varphi$ is  local homeomorphism.   Lemma~\ref{strong_compatibility} implies that $\varphi$
is surjective.  
\end{proof}

Next we check the  internal version of condition (WC).  This requires us to consider a
map $\mathfrak m$ from the space of diagrams in $\calC(\mathfrak{U})_1$ of the form

\begin{equation}\label{P-diagram}
\begin{tikzpicture}[xscale=1.5,yscale=-1.2]
    \def \z {35}
    \node (A) at (0, 0) {};
    \node (B) at (1, 0) {};
    \node (C) at (2, 0) {};
    \node (D) at (3, 0) {};
    \path (A) edge [->]node [auto] {$\scriptstyle{(x,\lambda_1)}$} (B);
    \path (C) edge [->]node [auto] {$\scriptstyle{(y,\lambda_4)}$} (D);
    \path (B.\z) edge [->]node [auto] {$\scriptstyle{(\tlambda_1(x),\lambda_2)}$} (C.180-\z);
    \path (B.360-\z) edge [->]node [auto,swap] {$\scriptstyle{(\tlambda_1(x),\lambda_3)}$} (C.180+\z);
\end{tikzpicture}
\end{equation}
with $\lambda_2\otimes\lambda_1=\lambda_3\otimes\lambda_1$, $\lambda_4\otimes\lambda_2=\lambda_4
\otimes\lambda_3$ and $\tlambda_2\tlambda_1(x)=y$, to the space of diagrams of the form 

\begin{equation}\label{fork}
\begin{tikzpicture}[xscale=1.5,yscale=-1.2]
    \def \z {35}
    \node (B) at (1, 0) {};
    \node (C) at (2, 0) {};
    \node (D) at (3, 0) {};
    \path (C) edge [->]node [auto] {$\scriptstyle{(y,\lambda_4)}$} (D);
    \path (B.\z) edge [->]node [auto] {$\scriptstyle{(z,\lambda_2)}$} (C.180-\z);
    \path (B.360-\z) edge [->]node [auto,swap] {$\scriptstyle{(z,\lambda_3)}$} (C.180+\z);
\end{tikzpicture}
\end{equation}
with $\lambda_4\otimes\lambda_2=\lambda_4\otimes\lambda_3$ and $\tlambda_2(z)=y=
\tlambda_3(z)$.  The map $\mathfrak m$ is the obvious projection.

\begin{lma}[The Topological  Weak Cancellability Condition for $\CU$]  The projection map
$\mathfrak{m}$ described above is a surjective local homeomorphism.  
\end{lma}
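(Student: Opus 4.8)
The plan is to argue exactly as for the topological Ore condition: both the source and the target of $\mathfrak{m}$ decompose as coproducts of charts indexed by discrete bimodule data, so that $\mathfrak{m}$ is assembled out of open embeddings and is automatically a local homeomorphism, leaving only surjectivity to be checked directly.

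First I would unwind the two diagram spaces. The space of diagrams of the form \eqref{P-diagram} is a subspace of an iterated fibre product of copies of $\CU_1$ over $\CU_0$; since $\CU_1$ is a coproduct of pieces $\ABST(\mu_{ji})\times\tU_i$ with $\ABST(\mu_{ji})$ discrete, this space is a coproduct over quadruples $(\lambda_1,\lambda_2,\lambda_3,\lambda_4)$ composable in the pattern of \eqref{P-diagram} --- say $\lambda_1\in\ABST(\mu_{ji})$, $\lambda_2,\lambda_3\in\ABST(\mu_{kj})$, $\lambda_4\in\ABST(\mu_{\ell k})$ --- with the point coordinate $x$ ranging over (an open subset of) $\tU_i$. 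On each nonempty summand the constraint $\lambda_2\otimes\lambda_1=\lambda_3\otimes\lambda_1$ forces $\lambda_2=\lambda_3$ by Lemma~\ref{right-cancel}, hence $\tlambda_2=\tlambda_3$; the remaining constraints $\lambda_4\otimes\lambda_2=\lambda_4\otimes\lambda_3$ and $\tlambda_2\tlambda_1(x)=y$ are then automatic, and the summand is all of $\tU_i$. The same analysis for \eqref{fork}, using Lemma~\ref{left-cancel} in place of Lemma~\ref{right-cancel} to deduce $\lambda_2=\lambda_3$ from $\lambda_4\otimes\lambda_2=\lambda_4\otimes\lambda_3$, shows that the space of diagrams \eqref{fork} is the coproduct, over triples $\lambda_2=\lambda_3\in\ABST(\mu_{kj})$, $\lambda_4\in\ABST(\mu_{\ell k})$, of copies of $\tU_j$.

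With these identifications, $\mathfrak{m}$ sends the summand of \eqref{P-diagram} indexed by $(\lambda_1,\lambda_2,\lambda_3,\lambda_4)$ into the summand of \eqref{fork} indexed by $(\lambda_2,\lambda_3,\lambda_4)$ by the rule $x\mapsto\tlambda_1(x)$, i.e.\ by the concrete chart embedding $\tlambda_1\colon\tU_i\to\tU_j$ of Lemma~\ref{tilde-l}. Each such $\tlambda_1$ is an open embedding (an injective smooth map between open subsets of $\mathbb{R}^n$ is open), so $\mathfrak{m}$ is a local homeomorphism. For surjectivity, given a diagram \eqref{fork} in the summand indexed by $(\lambda_2,\lambda_3,\lambda_4)$ with source point $z\in\tU_j$, I would complete it to a diagram \eqref{P-diagram} by prepending the unit arrow $u(z)=(\boldalpha_j(e_j),z)$, whose underlying concrete embedding is $\id_{\tU_j}$. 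Since $\lambda_2=\lambda_3$ on this summand, the quadruple $(\boldalpha_j(e_j),\lambda_2,\lambda_3,\lambda_4)$ satisfies the tensor identities defining \eqref{P-diagram}, so $(z,\boldalpha_j(e_j),\lambda_2,\lambda_3,\lambda_4)$ is a point of that space which $\mathfrak{m}$ carries back to the given diagram; in effect, prepending unit arrows is a continuous section of $\mathfrak{m}$. Together with the previous paragraph this proves the lemma.

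The only genuinely delicate step is the first one: identifying the two diagram spaces as coproducts of whole charts, rather than of more complicated loci inside them. This hinges on the fact that the commutativity constraints built into the diagram spaces of~\cite{PSi} are, in our rigid setting, either vacuous or unsatisfiable on each connected component, which is exactly what the cancellation Lemmas~\ref{left-cancel} and~\ref{right-cancel} provide; everything after that is the same bookkeeping already used for the Ore condition.
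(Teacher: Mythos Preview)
Your proof is correct and follows essentially the same approach as the paper's: both arguments decompose the two diagram spaces as coproducts of whole charts using the cancellation Lemmas~\ref{left-cancel} and~\ref{right-cancel}, identify $\mathfrak{m}$ on each summand with the open embedding $\tlambda_1$, and establish surjectivity by prepending the unit arrow $(\boldalpha_j(e_j),z)$. One minor quibble: your parenthetical ``an injective smooth map between open subsets of $\mathbb{R}^n$ is open'' is not literally true as stated, but since $\tlambda_1$ is by definition a Satake (smooth) embedding, the conclusion that it is an open embedding is correct.
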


\begin{proof} 
The space encoding diagrams of the form \eqref{P-diagram} is a coproduct of charts,

\[\coprod \tU_i,\]
where $U_i\le U_j\le U_k\le U_\ell\mathrm{\, in \,}\calO(\calU) $ and $\lambda_1\in\ABST(\mu_{ji})$, 
$\lambda_2,\lambda_3\in\ABST(\mu_{kj})$, $\lambda_4\in\ABST(\mu_{\ell k})$ satisfying 
$\lambda_2\otimes\lambda_1=\lambda_3\otimes\lambda_1$ and $\lambda_4\otimes\lambda_2=
\lambda_4\otimes\lambda_3$.  Lemma~\ref{left-cancel}  (or~\ref{right-cancel}) then implies
that $\lambda_2=\lambda_3$.  So in fact this is 

\[\coprod_{\substack{U_i\le U_j\le U_k\le U_\ell\mathrm{ \, in \, }\calO(\calU) \\
\lambda_1\in\ABST(\mu_{ji}) \\
\lambda_2\in\ABST(\mu_{kj}) \\
\lambda_4\in\ABST(\mu_{\ell k})}}\tU_i,\]
and we denote its elements by $(x,\lambda_1,\lambda_2,\lambda_4)$.

Similarly, in the diagram \eqref{fork}, Lemma~\ref{left-cancel} implies that $\lambda_2=\lambda_3$.
Hence, this space is

\[\coprod_{\substack{U_j\le U_k\le U_\ell \mathrm{ \, in \,}\calO(\calU) \\
\lambda_2\in\ABST(\mu_{kj}) \\
\lambda_4\in\ABST(\mu_{\ell k})}}\tU_j,\]
and we denote its elements by $(z,\lambda_2,\lambda_4)$.
In this notation, the  projection map $\mathfrak m$ we are considering is defined by
\[(x,\lambda_1,\lambda_2,\lambda_4)\mapsto (\tlambda_1(x),\lambda_2,\lambda_4).\]
Since this is an embedding when restricted to any connected component, it is clear that this
is a local homeomorphism  (even a local diffeomorphism).   To see that it is surjective,
given $(z,\lambda_2,\lambda_4) $ where $z\in \tU_j$, $\lambda_2\in\ABST(\mu_{kj})$ and
$\lambda_4\in\ABST(\mu_{\ell k})$, we can define $U_i = U_j$, $x=z$, 
and $\lambda_1\in \ABST(\mu_{ii})$ such that $\tlambda_1$
is the identity on $\tU_i$; this gives an object $(x,\lambda_1,\lambda_2,\lambda_4)$ such that
$\mathfrak{m} (x,\lambda_1,\lambda_2,\lambda_4) = (z,\lambda_2,\lambda_4)$.
\end{proof}

Thus, \cite{PSi} says that  we have the conditions necessary to construct a
topological groupoid of fractions.
This construction gives us the following atlas groupoid $\GU$.
The space of objects remains the same:

\[\GU_0=\coprod_{\tU_i\in\,\calU}\tU_i.\]
To obtain the space of arrows $\GU_1$, we start with the space of spans
\[\mathrm{Span}(\CU)=\coprod_{\mu_{ji},\mu_{ki}\in\calO(\calU)}\ABST(\mu_{ji})\times
\tU_i\times\ABST(\mu_{ki}).\]
%
%
We define $\GU_1$ as a coequalizer encoding the diagrams of the form
\eqref{spansofspans}, which in this case is the quotient of the space $\mathrm{Span}(\CU)$
by the equivalence relation $\RU$, described as follows:  
Suppose  $U_i,U_{i'}\leq U_j$, and $U_i,U_{i'}\leq U_k$  in $\calO(\calU)$,
and fix $(\lambda,x,\nu), (\lambda',x',\nu')$ for $x\in\tU_i$, $x'\in\tU_{i'}$,
$\lambda\in\ABST(\mu_{ji})$, $\lambda'\in\ABST(\mu_{ji'})$,
$\nu\in\ABST(\mu_{ki})$, $\nu'\in\ABST(\mu_{ki'})$.   Then,

\[((\lambda,x,\nu), (\lambda',x',\nu'))\in\RU\]
if and only if there is a chart $\tU_\ell$ with a point $y\in\tU_\ell$ and elements
$\kappa\in\ABST(\mu_{i\ell})$ and $\kappa'\in\ABST(\mu_{i'\ell})$ such that

\[\tkappa(y)=x,\quad \tkappa'(y)=x',\quad \boldalpha_{ji\ell}(\lambda\otimes\kappa)=
\boldalpha_{ji'\ell}(\lambda'\otimes\kappa') \mbox{ and }
\boldalpha_{ki\ell}(\nu\otimes\kappa)=\boldalpha_{ki'\ell}(\nu'\otimes\kappa').\]
The space of arrows of the groupoid is then 
\[\GU_1=\mathrm{Span}(\CU)/\RU.\]
The structure maps of the groupoid  are defined as follows.   
For $x\in\tU_i$, $\lambda\in\ABST(\mu_{ji})$ and $\nu\in\ABST(\mu_{ki})$:

\begin{eqnarray*}
s[\lambda,x,\nu] & = & \tlambda(x) \\
t[\lambda,x,\nu] & = &\tnu(x) \\
            u(x) & = & [\boldalpha_i(e_i),x,\boldalpha_i(e_i)]
\end{eqnarray*}
where $e_i$ is the identity of $G_i$.   By Remark~\ref{D:rmks}(\ref{D:prescomp}),
the maps $s$ and $t$ are well defined on equivalence classes.

We also need to define the composition operation on the arrow space of this groupoid.
Suppose we have composable elements  $[\lambda_1,x_1,\nu_1]$ and $[\lambda_2,x_2,\nu_2]$ with
$\lambda_1\in\ABST(\mu_{j_1 i_1})$, $\nu_1\in\ABST(\mu_{j_0 i_1})$, 
$\lambda_2\in\ABST(\mu_{j_0 i_2})$, $\nu_2\in\ABST(\mu_{j_2 i_2})$;  so we know that 
$\tnu_1(x_1)=\tlambda_2(x_2)$.  By Lemma~\ref{strong_compatibility} there is a chart
$\tU_k$ with a point $y\in\tU_k$ and abstract embeddings $\kappa_n\in\ABST(\mu_{i_nk})$ for $n=1,2$
such that $\tkappa_n(y)=x_n$ for $n=1,2$ and $\boldalpha_{j_0 i_1 k}(\nu_1\otimes\kappa_1)=
\boldalpha_{j_0 i_2 k}(\lambda_2\otimes\kappa_2)$.   Then we define

\[m([\lambda_2,x_2,\nu_2],[\lambda_1,x_1,\nu_1])=
[\boldalpha_{j_1 i_1 k}(\lambda_1\otimes\kappa_1),y,\boldalpha_{j_2 i_2 k}(\nu_2\otimes\kappa_2)].\]
The fact that this is well defined on equivalence classes was proved in~\cite{PSi}.

Lastly, the inverse of an arrow in $\GU$ is given by
\[i[\lambda,x,\nu]=[\nu,x,\lambda].\]
This is the topological groupoid produced by the construction of~\cite{PSi}. 

We now want to show that this groupoid  represents an orbifold; that is, it has the required
topological properties.    Specifically, we show that the arrow space is Hausdorff, that
there is a natural smooth structure such that all structure maps are local diffeomorphisms, 
and that the diagonal $(s,t)\colon\GU_1\to\GU_0\times\GU_0$ is a proper map.  

\begin{prop}
The space of arrows $\GU_1$ is Hausdorff.
\end{prop}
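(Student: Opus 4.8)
The plan is \emph{not} to attack the closedness of the span--equivalence relation $\RU$ head on, but to reduce the Hausdorff property of $\GU_1$ to the single assertion that the unit set $u(\GU_0)\subseteq\GU_1$ is closed, and then to verify that by a short computation with $\RU$. First I would record the standing facts: $\mathrm{Span}(\CU)$ is a disjoint union of open subsets of $\mathbb{R}^n$, and by the groupoid--of--fractions construction of~\cite{PSi} the quotient map $q\colon\mathrm{Span}(\CU)\to\GU_1$ is an open surjective local homeomorphism --- two spans carrying the same discrete labels but distinct middle points have distinct sources and so are never $\RU$--related, while the witness data in the description of $\RU$ can be transported continuously along the middle point. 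Hence $\GU_1$ is locally Euclidean, in particular locally Hausdorff and first countable, while $\GU_0=\coprod_i\tU_i$ is Hausdorff.

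For the reduction, suppose $g\neq g'$ in $\GU_1$ admit no disjoint open neighbourhoods; first countability gives $g_n\to g$ and $g_n\to g'$. Continuity of $s,t$ and the Hausdorff property of $\GU_0$ force $s(g)=s(g')=:a$ and $t(g)=t(g')=:b$, so $g^{-1}g'$ is a well--defined loop at $a$ which equals $u(a)$ exactly when $g=g'$. Continuity of inversion and composition turns $g_n^{-1}g_n=u(s(g_n))$ into $u(s(g_n))\to g^{-1}g'$, so $g^{-1}g'\in\overline{u(\GU_0)}$; hence once $u(\GU_0)$ is known to be closed, $g^{-1}g'$ is a unit which is also a loop at $a$, so $g^{-1}g'=u(a)$ and $g=g'$, a contradiction. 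Thus everything comes down to showing $u(\GU_0)$ is closed.

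The heart of that is the combinatorial fact that, for $\lambda,\nu\in\ABST(\mu_{ji})$ and $x\in\tU_i$, the arrow $[\lambda,x,\nu]$ is a unit if and only if $\lambda=\nu$. For ``if'', I would check that the chart $\tU_\ell=\tU_i$ with $y=x$, $\kappa=\boldalpha_i(e_i)$ and $\kappa'=\lambda$ witnesses $\bigl((\lambda,x,\lambda),(\boldalpha_j(e_j),\tlambda(x),\boldalpha_j(e_j))\bigr)\in\RU$ --- both required $\boldalpha$--identities reduce, via the pseudofunctor unit laws, to $\lambda=\lambda$ --- so $[\lambda,x,\lambda]=u(\tlambda(x))$. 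For ``only if'', a unit $[\lambda,x,\nu]$ equals $u(\tlambda(x))=[\boldalpha_j(e_j),\tlambda(x),\boldalpha_j(e_j)]$, so the description of $\RU$ produces $\tU_\ell$, a point $y$, and $\kappa\in\ABST(\mu_{i\ell})$, $\kappa'\in\ABST(\mu_{j\ell})$ with
\[\boldalpha_{ji\ell}(\lambda\otimes\kappa)=\boldalpha_{jj\ell}(\boldalpha_j(e_j)\otimes\kappa')=\boldalpha_{ji\ell}(\nu\otimes\kappa),\]
and applying $\boldalpha_{ji\ell}^{-1}$ together with Lemma~\ref{right-cancel} gives $\lambda=\nu$. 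Granting this, any $[\lambda,x,\nu]\in\overline{u(\GU_0)}$ lies in the locus $\{s=t\}$, so its legs land in a common chart $\tU_j$; since $q$ is a local homeomorphism it is a limit $[\lambda,x_n,\nu]\to[\lambda,x,\nu]$ of units carrying fixed labels $\lambda,\nu\in\ABST(\mu_{ji})$ with $x_n\to x$, and the combinatorial fact forces $\lambda=\nu$, whence $[\lambda,x,\nu]=[\lambda,x,\lambda]=u(\tlambda(x))$ is a unit. So $u(\GU_0)$ is closed and $\GU_1$ is Hausdorff.

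The step I expect to require the most care is the combinatorial fact, specifically the index bookkeeping: the definition of $\RU$ matches the two ``left'' legs of the spans with each other and the two ``right'' legs with each other, and it is precisely because here \emph{both} legs of $(\lambda,x,\nu)$ target $\tU_j$ that the two $\boldalpha$--equations collapse to the single cancellable identity above. One also has to make sure that $q$ really is an open local homeomorphism, so that $\GU_1$ is first countable and limits of units lift to convergent sequences of spans with fixed labels. By comparison, trying to prove directly that $\RU$ is closed in $\mathrm{Span}(\CU)\times\mathrm{Span}(\CU)$ seems harder, because a refinement chart realising an equivalence of two spans must be compatible with \emph{both} legs simultaneously, whereas the strong compatibility property (Lemma~\ref{strong_compatibility}) yields compatibility over one target chart at a time.
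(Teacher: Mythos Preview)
Your argument is correct and takes a genuinely different route from the paper's. The paper shows directly that the relation $\RU$ is closed in $\mathrm{Span}(\CU)\times\mathrm{Span}(\CU)$ by a case analysis on a pair $((\lambda,x,\nu),(\lambda',x',\nu'))\notin\RU$: the interesting case is when $s,t$ already agree on the two spans, where they invoke strong compatibility (Lemma~\ref{strong_compatibility}) to align the two spans along the $\lambda$-legs, observe that the $\nu$-legs must then differ, and use interpolation (Lemma~\ref{interpolation}) together with both cancellation lemmas to propagate this mismatch to a full neighbourhood. Your approach instead reduces, via the groupoid structure, to the closedness of $u(\GU_0)$, and your key combinatorial fact---$[\lambda,x,\nu]$ is a unit iff $\lambda=\nu$---needs only unit coherence and Lemma~\ref{right-cancel}. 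In particular you never touch Lemma~\ref{strong_compatibility} or Lemma~\ref{interpolation}, which is a real simplification of the core computation. One organisational point: the facts you lean on (that $q$ is an open local homeomorphism, and that $m$, $i$ are continuous) appear in the paper \emph{after} this proposition; they are logically independent of Hausdorffness, but if you write this up you should either reorder or remark explicitly that the proof of the subsequent lemma on $s$ (hence the local-homeomorphism property of $q$) does not use the Hausdorff property. Also, in the closure step you don't actually need a sequence: a single unit in the $q$-image of $\{\lambda\}\times V\times\{\nu\}$ already forces $\lambda=\nu$.
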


\begin{proof}
We need to show that the quotient $\GU_1=\mathrm{Span}(\CU)/\RU$ is Hausdorff.
The original space of arrows $\CU_1$, defined by a disjoint union of charts, is clearly
Hausdorff, as is  $\mbox{\rm Span}(\CU)$.   Therefore it suffices to show that the subspace

\[\RU=\{((\lambda,x,\nu),(\lambda',x',\nu'))\}\]
defined by the equivalence relation above is closed in
$\mbox{\rm Span}(\CU)\times\mbox{\rm Span}(\CU)$.

We show that the complement is open.  So suppose that $((\lambda,x,\nu) (\lambda',x',\nu'))
\in (\ABST(\mu_{ji})\times\tU_i\times\ABST(\mu_{ki}))\times(\ABST(\mu_{j'i'})
\times\tU_{i'}\times\ABST(\mu_{k'i'}))\subset
(\mathrm{Span}(\CU)\times\mathrm{Span}(\CU))\smallsetminus\RU$, so that
$(\lambda,x,\nu)$ and $ (\lambda',x',\nu')$ are not related under $\RU$.
We produce an open set $\cal{O} $ containing $((\lambda,x,\nu),(\lambda',x',\nu'))$
such that $\RU\cap\cal{O} =\varnothing$.

If $j'\neq j$ or $k'\neq k$, then we can define  $\cal{O} = 
\left(\{\lambda\}\times \tU_i\times\{\nu\}\right)
\times\left(\{\lambda'\}\times \tU_{i'}\times\{\nu'\}\right)$. 

If $j=j'$ and $k=k'$ but $\tlambda(x)\neq \tlambda'(x')$, we can use the Hausdorff property
of $\tU_j$ to find disjoint open neighbourhoods $\tU_{\tlambda(x)}$ and $\tU_{\tlambda'(x')}$,
and take their preimages under $\tlambda$ and $\tlambda'$ to obtain open
neighbourhoods $\tU_x$ of $x$ and $\tU_{x'}$ of $x'$.   These give us open neighbourhoods
$\{\lambda\}\times \tU_x\times\{\nu\}\subseteq \{\lambda\}\times \tU_i\times\{\nu\}$
and $\{\lambda'\}\times \tU_{x'}\times\{\nu'\}\subseteq \{\lambda'\}\times \tU_{i'}\times\{\nu'\}$
of $(\lambda,x,\nu)$ and $(\lambda',x',\nu')$ respectively in $\mbox{\rm Span}(\CU)$,  and we define
$\cal{O} = \left(\{\lambda\}\times\tU_x\times\{\nu\}\right)\times\left(\{\lambda'\}\times
\tU_{x'}\times\{\nu'\}\right)$.  A similar argument produces $\cal{O}$ when $\tnu(x)\neq \tnu'(x')$. 

Lastly, we consider the case where  $j=j'$, $k=k'$, $\tlambda(x)=\tlambda'(x')$
and $\tnu(x)=\tnu'(x')$, but $(\lambda,x,\nu)$ and $(\lambda',x',\nu')$ are not related
under $\RU$.   By Lemma~\ref{strong_compatibility} there is a chart $\tU_\ell$ with a point
$y$ and a pair of abstract embeddings $\kappa\in\ABST(\mu_{i\ell})$ and
$\kappa'\in\ABST(\mu_{i'\ell})$, such that $\tkappa(y)=x$, $\tkappa'(y)=x'$ and
$\boldalpha_{ji\ell}(\lambda\otimes\kappa)=\boldalpha_{ji'\ell}(\lambda'\otimes\kappa')$.
Since $(\lambda,x,\nu)$ and $(\lambda',x',\nu')$ are not related by $\RU$, we know that
\begin{equation}\label{eq-02}
\boldalpha_{ki\ell}(\nu\otimes\kappa)\neq\boldalpha_{ki'\ell}(\nu'\otimes\kappa').
\end{equation}
Now we consider the open neighbourhoods $\{\lambda\}\times\tkappa(\tU_\ell)\times\{\nu\}$ of
$(\lambda,x,\nu)$ and $\{\lambda'\}\times\tkappa'(\tU_\ell)\times\{\nu'\}$ of
$(\lambda',x',\nu')$, and define
\[\cal{O}= \left(\{\lambda\}\times \tkappa(\tU_\ell)\times\{\nu\}\right)
\times\left(\{\lambda'\}\times \tkappa'(\tU_\ell)\times\{\nu'\}\right).\]
We claim that $\RU\cap \cal{O} =\varnothing$:  suppose by contradiction that there are points
$z\in\tkappa(\tU_\ell)$ and $z'\in\tkappa'(\tU_\ell)$ such that $(\lambda,z,\nu)
\sim(\lambda',z',\nu')$.   Then there is a chart $\tU_m$ with a point $u\in\tU_m$ 
and abstract embeddings $\theta\in\ABST(\mu_{im})$ and $\theta'\in\ABST(\mu_{i'm})$
such that $\ttheta(u)=z$, $\ttheta'(u)=z'$, $\boldalpha_{jim}(\lambda\otimes\theta)=
\boldalpha_{ji'm}(\lambda'\otimes\theta')$ and $\boldalpha_{kim}(\nu\otimes\theta)=
\boldalpha_{ki'm}(\nu'\otimes\theta')$.
Without loss of generality, we may assume that $U_m\le U_\ell$ in $\calO(\calU)$.
Since $\ttheta(u)=z\in\tkappa(\tU_\ell)$, then by Lemma~\ref{interpolation}
there is a unique $\zeta\in\ABST(\mu_{\ell m})$ such that
$\boldalpha_{i\ell m}(\kappa\otimes\zeta)=\theta$.   Now, 
\begin{eqnarray*}
\boldalpha_{ji'm}(\lambda'\otimes\boldalpha_{i'\ell m}(\kappa'\otimes\zeta)) & =
& \boldalpha_{j\ell m}(\boldalpha_{ji'\ell}(\lambda'\otimes\kappa')\otimes\zeta) \\
& = &\boldalpha_{j\ell m}(\boldalpha_{ji\ell}(\lambda\otimes\kappa)\otimes\zeta) \\
& = & \boldalpha_{jim}(\lambda\otimes\boldalpha_{i\ell m}(\kappa\otimes\zeta)) \\
& = &\boldalpha_{jim}(\lambda\otimes\theta)\\ &=& \boldalpha_{ji'm}(\lambda'\otimes\theta').
\end{eqnarray*}
Since $\boldalpha_{ji'm}$ is an isomorphism, using the previous identity and
Lemma~\ref{left-cancel}, we get that $\boldalpha_{i'\ell m}(\kappa'\otimes\zeta)=\theta'$.
Hence,
\begin{eqnarray*}
\boldalpha_{k\ell m}(\boldalpha_{ki\ell}(\nu\otimes \kappa)\otimes\zeta) & = &
\boldalpha_{kim}(\nu\otimes\boldalpha_{i\ell m}(\kappa\otimes\zeta)) \\
& = & \boldalpha_{kim}(\nu\otimes\theta) \\
& = & \boldalpha_{ki'm}(\nu'\otimes\theta') \\
& = & \boldalpha_{ki'm}(\nu'\otimes\boldalpha_{i'\ell m}(\kappa'\otimes\zeta)) \\
& = & \boldalpha_{k\ell m}(\boldalpha_{ki'\ell}(\nu'\otimes\kappa')\otimes\zeta)
\end{eqnarray*}
By Lemma~\ref{right-cancel}, this implies that $\boldalpha_{ki\ell}(\nu\otimes\kappa)=
\boldalpha_{ki'\ell}(\nu'\otimes\kappa')$, contradicting \eqref{eq-02}.
This completes the proof of the claim and the proof of the proposition.
\end{proof}

\begin{lma}
The source map $s$ of the groupoid $\GU_1$ is a local homeomorphism.
\end{lma}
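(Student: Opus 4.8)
The strategy is to reduce the claim to the fact, already used repeatedly, that every concrete embedding $\tlambda$ is an open embedding of one $n$-dimensional chart into another, by first showing that the quotient map $q\colon\mathrm{Span}(\CU)\to\GU_1$ is itself a local homeomorphism and then noting that $s\circ q$ is, on each connected component, exactly one of these open embeddings.

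Recall that $\mathrm{Span}(\CU)=\coprod\{\lambda\}\times\tU_i\times\{\nu\}$ is a coproduct of charts, the coproduct running over all $\lambda\in\ABST(\mu_{ji})$ and $\nu\in\ABST(\mu_{ki})$ (for all $i,j,k$), and that on the component $C_{\lambda,\nu}=\{\lambda\}\times\tU_i\times\{\nu\}$ we have $s\bigl(q(\lambda,x,\nu)\bigr)=\tlambda(x)$, an open embedding $\tU_i\hookrightarrow\tU_j\subseteq\GU_0$. I would then establish two properties of $q$. First, $q$ is injective on each $C_{\lambda,\nu}$: if $q(\lambda,z,\nu)=q(\lambda,z',\nu)$ then $(\lambda,z,\nu)$ and $(\lambda,z',\nu)$ are $\RU$-related, so there are a chart $\tU_\ell$, a point $y\in\tU_\ell$, and $\kappa,\kappa'\in\ABST(\mu_{i\ell})$ with $\tkappa(y)=z$, $\tkappa'(y)=z'$ and $\boldalpha_{ji\ell}(\lambda\otimes\kappa)=\boldalpha_{ji\ell}(\lambda\otimes\kappa')$; Lemma~\ref{left-cancel} forces $\kappa=\kappa'$, hence $z=z'$. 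Second, $q$ is an open map: for an open set of the form $\{\lambda\}\times W\times\{\nu\}$ with $W\subseteq\tU_i$ open, I would check that its $\RU$-saturation is open by taking a point $(\lambda',z',\nu')$ of it — so $(\lambda',z',\nu')$ is $\RU$-related to some $(\lambda,z,\nu)$ with $z\in W$, witnessed by some $\tU_\ell$, $y$, $\kappa\in\ABST(\mu_{i\ell})$, $\kappa'\in\ABST(\mu_{i'\ell})$ — and observing that $\{\lambda'\}\times\tkappa'\bigl(\tkappa^{-1}(W)\bigr)\times\{\nu'\}$ is an open neighbourhood of $(\lambda',z',\nu')$ all of whose points $(\lambda',\tkappa'(y''),\nu')$, $y''\in\tkappa^{-1}(W)$, are $\RU$-related (through the same $\tU_\ell,\kappa,\kappa'$) to $(\lambda,\tkappa(y''),\nu)$, which lies in the original set. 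This openness is in any case also a consequence of the general category-of-fractions machinery of~\cite{PSi}.

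Putting these together, $q$ restricted to each component $C_{\lambda,\nu}$ is a continuous open injection, hence an open embedding, so $q$ is a local homeomorphism and the sets $q(C_{\lambda,\nu})$ form an open cover of $\GU_1$. On $q(C_{\lambda,\nu})$ the source map factors as $s=\tlambda\circ\bigl(q|_{C_{\lambda,\nu}}\bigr)^{-1}$, a composite of open embeddings, hence an open embedding onto the open subset $\tlambda(\tU_i)\subseteq\GU_0$. Therefore $s$ is a local homeomorphism. I expect the one genuinely delicate point to be the openness of $q$ — equivalently, that the $\RU$-saturation of an open set is open; everything else is routine bookkeeping once the cancellation Lemma~\ref{left-cancel} is in hand.
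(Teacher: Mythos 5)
Your proposal is correct and follows essentially the same route as the paper: injectivity of the quotient map on each component $\{\lambda\}\times\tU_i\times\{\nu\}$ via Lemma~\ref{left-cancel}, so that each component maps homeomorphically onto an open piece of $\GU_1$ on which $s$ acts by the open embedding $\tlambda$. The only difference is that you spell out the openness of the quotient map (the saturation argument), which the paper leaves implicit by appealing to the general category-of-fractions construction of~\cite{PSi}; your explicit check is valid.
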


\begin{proof}
By Lemma~\ref{left-cancel}, two points $(\lambda,x,\nu)$ and $(\lambda,x',\nu)$
in $\mathrm{Span}\,(\CU)$ with $x,x'\in\tU$, $\lambda\in\ABST(\mu_{VU})$ and
$\nu\in\ABST(\mu_{WU})$ are identified in $\GU_1$ if and only if $x=x'$.
Hence the points of the form $[\lambda,y,\nu]$ in $\GU_1$ (for $y\in\tU$) form an open neighbourhood
of $[\lambda,x,\nu]$ which is homeomorphic to $\tU$, and $s$ is a homeomorphism onto its image
when restricted to this neighbourhood.
\end{proof}

Note that the space $\mathrm{Span}\,(\CU)$ is a manifold and the quotient map
$\mathrm{Span}\,(\CU)\to\GU_1$ is a local homeomorphism.   In fact, it is a homeomorphism/embedding
when restricted to each connected component, so $\GU_1$ inherits the structure of
being a smooth manifold and the structure maps are local diffeomorphisms with
respect to this structure.

\begin{prop}
The diagonal $(s,t)\colon\GU_1\to\GU_0\times\GU_0$ is proper.
\end{prop}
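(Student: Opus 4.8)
The plan is to prove that $(s,t)^{-1}(K)$ is compact for every compact $K\subseteq\GU_0\times\GU_0$. Since $X$ is second countable we may take the index set $I$ countable, so $\GU_0=\coprod_i\tU_i$ and $\GU_1=\mathrm{Span}(\CU)/\RU$ are second countable and locally compact Hausdorff (the quotient map $\mathrm{Span}(\CU)\to\GU_1$ is a local homeomorphism, and $\GU_1$ was shown to be Hausdorff above), and for such spaces this condition is equivalent to properness of $(s,t)$. The first reduction is routine: $K$ meets only finitely many of the open sets $\tU_j\times\tU_k$; the source and target charts of an arrow are well defined, so the subsets $(s,t)^{-1}(\tU_j\times\tU_k)$ are pairwise disjoint; and each $K\cap(\tU_j\times\tU_k)$ is compact and contained in a product $C_1\times C_2$ of compact sets $C_1\subseteq\tU_j$, $C_2\subseteq\tU_k$. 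So it suffices to prove that $(s,t)^{-1}(C_1\times C_2)$ is compact for such $C_1,C_2$.

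Fix $C_1\subseteq\tU_j$ and $C_2\subseteq\tU_k$. If $[\lambda,x,\nu]$ lies in $(s,t)^{-1}(C_1\times C_2)$, then $\tlambda(x)\in C_1$, $\tnu(x)\in C_2$, and since the concrete embeddings commute with the maps $\pi$ the two endpoints lie over a common point of the compact set $L:=\pi_j(C_1)\cap\pi_k(C_2)\subseteq U_j\cap U_k$. I will use the atlas compatibility condition $U_j\cap U_k=\bigcup\{W\in\calO(\calU):W\subseteq U_j\cap U_k\}$ to choose finitely many charts $\tU_{m_1},\dots,\tU_{m_r}$ with $U_{m_s}\subseteq U_j\cap U_k$ and $L\subseteq\bigcup_s U_{m_s}$, together with compact sets $L_s\subseteq U_{m_s}$ with $L=\bigcup_s L_s$. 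Because $G_{m_s}^{\red}$ is finite and $\pi_{m_s}\colon\tU_{m_s}\to U_{m_s}$ is the quotient map for its action, $\pi_{m_s}$ is proper, so each $D_s:=\pi_{m_s}^{-1}(L_s)$ is a compact subset of $\tU_{m_s}$.

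The heart of the argument is the claim that \emph{every} arrow $g$ with $s(g)\in C_1$, $t(g)\in C_2$ and $\bar x:=\pi_j(s(g))\in L_s$ admits a representative $g=[\lambda,x,\nu]$ with $\lambda\in\ABST(\mu_{j m_s})$, $\nu\in\ABST(\mu_{k m_s})$ and $x\in\tU_{m_s}$ (and then necessarily $x\in D_s$, since $\pi_{m_s}(x)=\pi_j(\tlambda(x))=\bar x$). To prove it, I would start from any representative $g=[\lambda_0,x_0,\nu_0]$ with apex chart $\tU_{i_0}$ and, using that $\bar x\in U_{i_0}\cap U_{m_s}$, pick via atlas compatibility a chart $\tU_c$ with $\bar x\in U_c\subseteq U_{i_0}\cap U_{m_s}$. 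Since $x_0$ projects into $U_c$, it lies in the image of some concrete embedding $\tU_c\to\tU_{i_0}$ (a standard property of connected, simply connected Satake charts recalled in Section~\ref{S:efforb}); lifting that embedding through $\trho_{i_0 c}$ (Remark~\ref{D:rmks}(\ref{D:surj})) and invoking the definition of $\RU$ to restrict the span, one obtains a representative $g=[\lambda_c,x_c,\nu_c]$ with apex $\tU_c$, $\lambda_c\in\ABST(\mu_{jc})$, $\nu_c\in\ABST(\mu_{kc})$, $\pi_c(x_c)=\bar x$. Now the composition isomorphism $\boldalpha_{j m_s c}$ identifies $\ABST(\mu_{j m_s})\otimes_{G_{m_s}}\ABST(\mu_{m_s c})$ with $\ABST(\mu_{jc})$, and similarly for the target legs, so we may write $\lambda_c=\boldalpha_{j m_s c}(\lambda\otimes\kappa_1)$ and $\nu_c=\boldalpha_{k m_s c}(\nu\otimes\kappa_2)$ with $\lambda\in\ABST(\mu_{j m_s})$, $\nu\in\ABST(\mu_{k m_s})$, $\kappa_1,\kappa_2\in\ABST(\mu_{m_s c})$; since $G_{m_s}$ acts transitively on $\ABST(\mu_{m_s c})$ we can absorb a group element into $\nu$ to arrange $\kappa_1=\kappa_2=:\kappa$. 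Setting $x:=\tkappa(x_c)\in\tU_{m_s}$ and applying the definition of $\RU$ once more (with intermediate chart $\tU_c$, the element $\kappa$, and the identity embedding of $\tU_c$) yields $g=[\lambda_c,x_c,\nu_c]=[\lambda,x,\nu]$, proving the claim.

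To conclude: each $\ABST(\mu_{j m_s})$ and $\ABST(\mu_{k m_s})$ is finite, being a torsor under the free transitive left action of a finite group. By the claim, $(s,t)^{-1}(C_1\times C_2)$ is contained in the union, over $s$ and over the finitely many pairs $(\lambda,\nu)\in\ABST(\mu_{j m_s})\times\ABST(\mu_{k m_s})$, of the images in $\GU_1$ of the compact sets $\{\lambda\}\times D_s\times\{\nu\}\subseteq\mathrm{Span}(\CU)$; each such image is compact as the continuous image of a compact set, so the union is compact. Since $(s,t)$ is continuous and $\GU_0\times\GU_0$ is Hausdorff, $(s,t)^{-1}(C_1\times C_2)$ is closed in $\GU_1$, hence closed in that compact union, hence compact, which finishes the argument. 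I expect the representability claim of the previous paragraph to be the main obstacle: restricting a span \emph{down} to the small chart $\tU_c$ is routine, but pushing the two legs back \emph{up} to the fixed chart $\tU_{m_s}$ with a single common $\tU_{m_s c}$-component $\kappa$ — so that one new apex point $x$ serves both legs and the class of $g$ is unchanged — is the genuinely delicate point, and it is there that transitivity of the left actions in the atlas bimodules is used.
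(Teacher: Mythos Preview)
Your argument is correct and follows a genuinely different route from the paper's own proof. The paper proceeds by first showing that the fibers $(s,t)^{-1}(x,y)$ are finite (reducing to the case $y=x$ and showing that every arrow in that fiber is of the form $[\boldalpha_i(e),x,g\cdot\lambda']$ for $g$ in the isotropy group $(G_i)_x$), and then proving separately that $(s,t)$ is a closed map by a direct neighbourhood argument. Your approach instead verifies the compact--preimage criterion: you reduce to a product of compacts in a single pair of charts, produce finitely many auxiliary charts $\tU_{m_s}$ covering the image in $X$, and then prove the key representability claim that every arrow with endpoints in $C_1\times C_2$ admits a representative with apex in one of the $\tU_{m_s}$ --- so the preimage sits inside a finite union of images of compact boxes $\{\lambda\}\times D_s\times\{\nu\}$.

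The step you flag as delicate --- arranging a common $\kappa\in\ABST(\mu_{m_s c})$ for both legs --- is fine: transitivity of the left $G_{m_s}$-action on $\ABST(\mu_{m_s c})$ lets you absorb the discrepancy into $\nu$, and the unit coherence for $\boldalpha_c$ justifies the final identification $[\lambda_c,x_c,\nu_c]=[\lambda,\tkappa(x_c),\nu]$. One minor comment: the countability reduction at the start is unnecessary. Since each $\tU_i$ is open in $\mathbb{R}^n$, the spaces $\GU_0$ and $\GU_0\times\GU_0$ are locally compact Hausdorff regardless of the cardinality of $I$, and for maps into locally compact Hausdorff spaces the compact--preimage condition is already equivalent to properness. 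Your argument goes through without it. The paper's decomposition into ``finite fibers'' plus ``closed map'' isolates the algebraic content more cleanly, whereas your approach is more constructive and makes the role of the finitely many apex charts and the finiteness of the atlas bimodules explicit; both are legitimate.
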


\begin{proof}
We begin by showing that the fibers of $(s,t)$ are finite, hence compact. 
Since $\GU$ is a groupoid, it is sufficient to consider the fibers of the form $(s,t)^{-1}(x,x)$.
So let $x\in\GU_0$, thus $x\in\tU_i$ for some $i$, and consider $(s,t)^{-1}(x,x)$.
First consider the elements in this fiber of the form $[\lambda,x',\lambda']$ with 
$\lambda,\lambda'\in\ABST(\mu_{ii})$ (which is isomorphic to $\mathsf{G}_i$ via
$\boldalpha_i^{-1}$) and $x'\in \tU_i$, such that $\tlambda(x')=x=\tlambda'(x')$.
For any $g\in G_i$, we know that
$[\lambda,x',\lambda']=[\lambda\cdot g,(\rho_i(g^{-1}))(x'),\lambda'\cdot g]$.
Since the right action of $G_i$ is transitive on $\ABST(\mu_{ii})$,  each $[\lambda,x',\lambda']$
has a representation for which  $\lambda=\boldalpha_i(e)$, and consequently
$\tlambda=\mbox{id}_{\tU_i}$.   So in this representation, we have  $[\lambda,x,\lambda']$,
with $\lambda=\boldalpha_i(e)$  and $\lambda'$ such that $\tlambda'(x)=x$.

Fix one of these $\lambda'$. We claim that all the other arrows in $(s,t)^{-1}(x,x)$
are of the form $[\lambda,x,g\cdot\lambda']$ with $g\in (G_i)_x=
\{g\in G_i\textrm{ such that }\rho_i(g_i)\cdot x=x\}$.   To see that this is true, suppose that
$[\nu,y,\nu']\in(s,t)^{-1}(x,x)$ with $y\in \tU_j$ and $\nu,\nu'\in \ABST(\mu_{ij})$.
If $g\in G_i$ is the unique element such that $g\cdot\boldalpha_{iij}(\lambda'\otimes\nu)=\nu'$, 
then $[\lambda,x,g\cdot\lambda']=[\nu,y,\nu']$.   This implies that  $(s,t)^{-1}(x,x)$ is finite.  

It remains to show that $(s,t)$ is a closed map.
We begin by showing that the image $(s,t)(\GU_1)\subseteq \GU_0\times\GU_0$ is closed.
Let $(x, y) \in\GU_0\times\GU_0$, with $x\in\tU_j$ and $y\in\tU_k$.
If $(x,y)\in(\GU_0\times\GU_0) \smallsetminus (s,t)(\GU_1)$, then the images $\bar{x}$ and
$\bar{y}$ in the quotient space are distinct.  
Since the quotient space is Hausdorff, there are disjoint
open neighbourhoods $U_{\bar{x}}$ and $U_{\bar{y}}$ of $\bar{x}$ and $\bar{y}$ in $\GU_0 / \GU_1$.  
Define $\tU_{\bar{x}}$ to  be the preimage of $U_{\bar{x}}$ in $\tU_j$ under the quotient map,
and similarly let $\tU_{\bar{y}}$ be the preimage of $U_{\bar{y}}$ in $\tU_k$.
Then $(\tU_{\bar{x}}\times\tU_{\bar{y}})\cap (s,t)(\GU_1)=\varnothing$, and we have shown
that the complement of  $(s,t)(\GU_1)$ is open.  

Now let $C\subseteq \GU_1$ be a closed subset, and let $(x,y)\in (\GU_0\times\GU_0)
\smallsetminus (s,t)(C)$, where $x\in\tU_j$ and $y\in\tU_k$.
Without loss of generality we may assume that $(s,t)^{-1}(x,y)
\neq \varnothing$. We  have shown earlier that it is finite, and therefore we have
$(s,t)^{-1}(x,y)=\{[\lambda_1,z_1,\lambda'_1],\ldots,[\lambda_n,z_n,\lambda'_n]\}$
with $z_i\in \tU_{m_i}$ and $\lambda_i\in\ABST(\mu_{jm_i})$, $\lambda'_i\in\ABST(\mu_{km_i})$
for each $i=1,\cdots,n$.
Since $C\subseteq \GU_1$ is closed, for each $i$ there is an open neighbourhood
$\tU'_{m_i}$ of $z_i$ in $\tU_{m_i}$, such that the set
$V_i=\{\lambda_i\}\times \tU'_{m_i}\times\{\lambda_i'\}$ does not intersect $C$.

Both $s$ and $t$ are homeomorphisms when restricted to each of the $V_i$, so $s(V_i)\times t(V_i)$
is open in $\GU_0\times\GU_0$.
Then $W_x\times W_y:=\left(\bigcap_{i=1}^n s(V_i)\right)\times\left(\bigcap_{i=1}^n t(V_i)\right)$
is an open neighbourhood of the point $(x,y)$.
However, $(s,t)^{-1}(W_x\times W_y)$ may contain connected components that are different
from the $V_i$ chosen before and hence may have a nonempty intersection with $C$.
In order to make sure that there are no such components we need to choose smaller connected
neighbourhoods $W'_x\subseteq W_x\subseteq \tU_j$ and $W'_y\subseteq W_y\subseteq \tU_k$
such that the isotropy groups of all points in $W'_x$ are subgroups of the isotropy group
of $x$ and similarly for $W'_y$.  We show how to construct $W'_x$.
($W'_y$ can be found similarly.)  For each $g\in G_j$, consider $\rho_j(g)(W_x)$.
If $x\in \rho_j(g)(W_x)$, let $W_x^g=\rho_j(g)(W_x)$.
If $x\not\in \rho_j(g)(W_x)$, note that $x$ is also not in the closure
$\overline{\rho_j(g)(W_x)}$ and let $W_x^g=W_x\smallsetminus\overline{\rho_j(g)(W_x)}$
Now define $W_x'=\bigcap_{g\in G_j}W_x^g$.
Then $(s,t)^{-1}(W_x'\times W_y')\subseteq \bigcup V_i$ and hence $W_x'\times W_y'$
has an empty intersection with $(s,t)(C)$.
\end{proof}

\section{Constructing an Orbifold Atlas from a Groupoid}\label{S:atlastogroupoid}

Given a smooth \'etale groupoid $\calG$ with a proper diagonal, we will construct an orbifold
atlas for its quotient space $\calG_0/\calG_1$.
We begin by reviewing a couple of results about these groupoids from~\cite{MP}. 

Let $s, t\colon \calG_1\to\calG_0$ denote the source and target maps of this groupoid.
Since $s$ and $t$ are \'etale, the preimage $s^{-1}(x)\cap t^{-1}(x)=(s,t)^{-1}(x,x)$ is a
discrete group for any point $x\in \calG_0$; and since
$(s,t)\colon\calG_1\to\calG_0\times\calG_0$ is proper, this pre-image is a finite group.
We denote it by $\calG_x$.

Each point $g\in \calG_1$ has a neighbourhood $W_g$ such that both $s$ and $t$ restrict
to homeomorphisms on $W_g$.  Such neighbourhoods are called {\em local bisections}.
When $s\colon W_g\to \tU$ and $t\colon W_g\to \tV$, we also denote this local bisection
by its section $\sigma\colon \tU\to W_g\subseteq \calG_1$; i.e., $s\circ\sigma=\mbox{id}_{\tU}$.
In this case we say that this local bisection carries $\tU$ onto $(t\circ\sigma)(\tU)=\tV$.    
We will also consider the situation where we take $\tilde{V}$ to strictly contain the image $t(W_g)$; i.e., when $(t\circ\sigma)(\tU)\subseteq\tV$.  In this case we will 
say that the local bisection takes or embeds $\tU$ into $\tV$.

The properness of the diagonal implies
the existence of a collection of special open neigbourhoods for points $x\in\calG_0$, the space of objects.  In developing these, we will consider invariant sets $\tU $ of $\calG_0$, and we will denote its quotient $\tU/\calG_1$ in $\calG_0/\calG_1$ by $U$. 

\begin{dfn}
A connected simply connected open subset $\tU\subseteq\calG_0$ is a {\em translation open set}
if the group $G_{U}$ of local bisections $\sigma\colon \tU\to\calG_1$  (i.e., $s\circ\sigma=\mbox{id}_{\tU}$) which are defined on $\tU$  and carry 
$\tU$ onto itself (i.e., $(t\circ\sigma)(\tU)=\tU$) is finite and the homomorphism of Lie groupoids 
$G_{U}\ltimes \tU\to\calG|_{\tU}$, defined by evaluation $(\sigma,u)\mapsto \sigma(u)$, is an isomorphism.
\end{dfn}
 
These translation neighbourhoods form a basis for the topology on $\calG_0$, as do the local bisections
for the topology on $\calG_1$.  (This was shown for translation neighbourhoods in the proof of the part $4\Rightarrow 1$ of the main theorem of~\cite{MP}.  
Although that paper is about effective orbifolds, this part of the proof does
not use effectiveness.)

Translation neighbourhoods and local bisections will form the essential building blocks for 
the orbifold atlas.   Before defining the atlas, we prove some basic results
about the structure of these neighbourhoods.

\subsection{Translation Neighbourhoods and Local Bisections} \label{S:tr}

We start with a couple of results about the relationship between translation neighbourhoods
and local bisections.   For any two open subsets $\tU,\tV\subset\calG_0$, let $\mathsf{A}_{VU}$ 
denote the set of all local bisections $\sigma\colon \tU\to\calG(U,V)=s^{-1}(\tU)\cap t^{-1}(\tV)$
which are defined on $\tU$ and carry $\tU$ into $\tV$ in the sense that $(t\circ\sigma)(\tU)\subseteq \tV$.
Note that there is a canonical action of $G_{V}$ on $\mathsf{A}_{VU}$ from the left, and canonical action of $G_{U}$ on 
$\mathsf{A}_{VU}$ from the right. Both actions arise from the standard composition of bisections 
$(\sigma'\sigma)(u):=\sigma'(t\sigma(u))\sigma(u)$, where the composition on the righthand side is in $\calG$.
With these actions,  $\mathsf{A}_{UU }=G_U $ for any translation open set $\tU $.

\begin{lma}\label{D:point-open}
For each translation open set $\tV\subseteq \calG_0$ and  each $x\in\calG_0$ such that the orbit
$\calG_x$ intersects $\tV$, there is a translation open neighbourhood $\tU $ of $x$ 
such that the evaluation map $\mathsf{A}_{VU }\times \tU \to\calG(U, V)$ is a diffeomorphism.
\end{lma}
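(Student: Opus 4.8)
The plan is to reduce to the easy case where the neighbourhood of $x$ sits inside the translation open set $\tV$, by conjugating with a local bisection.

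First I would use the hypothesis to fix an arrow $g_0\in\calG_1$ with $s(g_0)=x$ and $v_0:=t(g_0)\in\tV$. Since $\calG$ is \'etale, $g_0$ lies on a local bisection, and after shrinking its domain I obtain a local bisection $\sigma_0\colon\tW\to\calG_1$ defined on a connected neighbourhood $\tW$ of $x$, with $\sigma_0(x)=g_0$, such that $\psi_0:=t\circ\sigma_0$ is a diffeomorphism of $\tW$ onto an open neighbourhood of $v_0$ contained in $\tV$. Because translation open sets form a basis for $\calG_0$, I can pick a translation open set $\tV'$ with $v_0\in\tV'\subseteq\psi_0(\tW)$ and set $\tU:=\psi_0^{-1}(\tV')$, a connected and simply connected open neighbourhood of $x$. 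Conjugation by $\sigma_0|_{\tU}$, namely $g\mapsto\sigma_0(tg)\,g\,\sigma_0(sg)^{-1}$, is an isomorphism of Lie groupoids $\calG|_{\tU}\to\calG|_{\tV'}$ covering $\psi_0$; since it induces a bijection between $G_U$ and $G_{V'}$ and $\tV'$ is a translation open set, I conclude that $G_U$ is finite and that $G_U\ltimes\tU\to\calG|_{\tU}$ is an isomorphism. Hence $\tU$ is a translation open set.

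It remains to prove that the evaluation map $\mathsf{A}_{VU}\times\tU\to\calG(U,V)$ is a diffeomorphism, and again I would transport the question along $\sigma_0$. Composition with $\sigma_0$ defines a bijection $\mathsf{A}_{VV'}\to\mathsf{A}_{VU}$, $\sigma\mapsto\bigl(u\mapsto\sigma(\psi_0(u))\,\sigma_0(u)\bigr)$, and right multiplication by $\sigma_0(s(-))^{-1}$ defines a diffeomorphism $\calG(U,V)\to\calG(V',V)$; a direct check shows that these, together with $\psi_0\colon\tU\to\tV'$, intertwine the two evaluation maps. So it suffices to handle $\mathsf{A}_{VV'}\times\tV'\to\calG(V',V)$, where now $\tV'\subseteq\tV$ and $\tV$ is a translation open set. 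Working inside $\calG|_{\tV}\cong G_V\ltimes\tV$, an arrow with source in $\tV'$ automatically has target in $\tV$, so $\calG(V',V)=G_V\times\tV'$; and since $G_V$ is discrete and $\tV'$ is connected, a local bisection on $\tV'$ with target inside $\tV$ is exactly a constant section $w\mapsto(h,w)$ for some $h\in G_V$, so $\mathsf{A}_{VV'}\cong G_V$ and the evaluation map is the identity of $G_V\times\tV'$.

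The only real obstacle, and the reason for introducing $\tV'$, is that $\tU$ need not be contained in $\tV$, so one cannot argue directly inside the translation presentation of $\tV$: one must first shrink the neighbourhood of $v_0$ inside $\tV$ to a translation open set and carry the whole argument back along $\sigma_0$. The content-bearing ingredients are the basis property of translation open sets and the compatibility of conjugation by a bisection with translation structures; the remaining points --- injectivity of the evaluation map (from connectedness of $\tU$ and \'etaleness of $\calG$), smoothness, and the intertwining identities --- are routine.
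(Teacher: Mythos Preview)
Your argument is correct, but it takes a different route from the paper's proof.  The paper proceeds directly: it observes that $s^{-1}(x)\cap t^{-1}(\tV)$ is finite (since $\tV$ is a translation open set), chooses for each arrow $g$ in this finite set a local bisection $\tV_g$ through $g$ with $t(\tV_g)\subseteq\tV$, sets $W_x=\bigcap_g s(\tV_g)$, and then picks any translation open $\tU\subseteq W_x$ containing $x$.  Surjectivity of the evaluation map then follows because the restrictions of the $\tV_g$ to $\tU$ already cover $\calG(U,V)$.

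Your approach instead transports the problem along a single bisection $\sigma_0$ to reduce to the easy situation $\tV'\subseteq\tV$, where the translation description $\calG|_{\tV}\cong G_V\ltimes\tV$ makes the claim transparent.  This is a cleaner ``change of base point'' argument and makes explicit why $\tU$ is itself a translation open set (via conjugation), a point the paper leaves implicit by simply invoking that translation open sets form a basis.  The paper's argument, on the other hand, avoids the bookkeeping of checking that conjugation by $\sigma_0$ intertwines the two evaluation maps, and produces the covering bisections $\sigma_g$ directly rather than through the identification $\mathsf{A}_{VV'}\cong G_V$.  Both proofs ultimately rely on the same two facts: that translation open sets form a basis for $\calG_0$, and that over a connected set a section of an \'etale map into a discrete set is constant.
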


\begin{proof}
Since $\tV$ is a translation neighbourhood, $s^{-1}(x)\cap t^{-1}(\tV)$ is finite
(the intersection of any orbit with a translation neighbourhood is finite and 
each point has a finite isotropy group.)
For each $g\in s^{-1}(x)\cap t^{-1}(\tV)$, choose a local bisection $\tV_g$ containing $g$ and such that
$t(\tV_g)\subseteq \tV$.   Let $W_x=\bigcap s(\tV_g)$ and let $\tU $
be a translation neighbourhood of $x$ such that  $\tU \subseteq{W}_x$.
Then the evaluation map clearly defines an embedding $\mathsf{A}_{VU }\times \tU \to \calG(U, V)$.
This embedding is surjective by the way $\tU $ was constructed.
\end{proof}

Next we consider the relationship between two translation neighbourhoods $\tU $ and $\tV$
of $\calG_0$, where one is a subset of the other up to equivalence:   that is, such that
$\tU / \calG_1 \subseteq \tV/\calG_1$ in the quotient groupoid $\calG_0/\calG_1$.      
\begin{lma}\label{D:subcharts}
Let $\tU ,\tV\subseteq\calG_0$ be translation open subsets with $\tU $ connected and simply connected such that 
$\tU / \calG_1 \subseteq \tV/\calG_1$. Then the evaluation map $\mathsf{A}_{VU }\times \tU \to\calG(U, V)$ is a diffeomorphism and the canonical left $G_{V}$ action on $\mathsf{A}_{VU }$ is free and transitive.
\end{lma}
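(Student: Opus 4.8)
The plan is to deduce both assertions from a single claim: that the restricted source map $s\colon\calG(U,V)\to\tU$, where $\calG(U,V)=s^{-1}(\tU)\cap t^{-1}(\tV)$, is a covering map whose set of global sections landing in $t^{-1}(\tV)$ is exactly $\mathsf{A}_{VU}$ (with its discrete topology). Granting this, simple-connectivity of $\tU$ trivializes the covering and yields the first statement, while the translation-neighbourhood structure of $\tV$ controls the $G_V$-action and yields the second.

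First I would verify local triviality. Since $\tU/\calG_1\subseteq\tV/\calG_1$, every $x\in\tU$ has its orbit $\calG_x$ meeting $\tV$, so Lemma~\ref{D:point-open} provides a translation neighbourhood $\tW_x\ni x$ for which evaluation $\mathsf{A}_{VW_x}\times\tW_x\to\calG(W_x,V)$ is a diffeomorphism compatible with $s$, where $\calG(W_x,V)=s^{-1}(\tW_x)\cap t^{-1}(\tV)$. Since $\tW_x\cap\tU$ is contained both in $\tW_x$ and in $\tU$, one has $s^{-1}(\tW_x\cap\tU)\cap\calG(U,V)=s^{-1}(\tW_x\cap\tU)\cap\calG(W_x,V)$, which the above diffeomorphism identifies with $\mathsf{A}_{VW_x}\times(\tW_x\cap\tU)$ compatibly with $s$; hence $\tW_x\cap\tU$ is evenly covered. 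As $s$ is a local diffeomorphism (a restriction of the \'etale source map), it follows that $s\colon\calG(U,V)\to\tU$ is a covering map, and it is surjective because each of its fibres is nonempty (again since orbits meet $\tV$). The hard part of the argument is really this step: upgrading the pointwise Lemma~\ref{D:point-open} to a covering over all of $\tU$, taking care that the diffeomorphism it supplies restricts correctly over the possibly-disconnected overlaps $\tW_x\cap\tU$.

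Next, because $\tU$ is connected, simply connected and locally path connected (it is open in the manifold $\calG_0$), the covering $s\colon\calG(U,V)\to\tU$ is trivial: every connected component of $\calG(U,V)$ is mapped diffeomorphically onto $\tU$ by $s$, hence is the image of a unique smooth section, which automatically lands in $t^{-1}(\tV)$ and therefore lies in $\mathsf{A}_{VU}$; conversely each element of $\mathsf{A}_{VU}$ has connected image, so it lies in a single component. Thus $\sigma\mapsto\sigma(\tU)$ is a bijection from $\mathsf{A}_{VU}$ onto the set of connected components of $\calG(U,V)$, and the evaluation map $\mathsf{A}_{VU}\times\tU\to\calG(U,V)$, which on each slice $\{\sigma\}\times\tU$ is the diffeomorphism $\sigma$, is itself a diffeomorphism; this gives the first assertion.

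Finally, for the $G_V$-action I would transport it to $\calG(U,V)$ by $\tau\cdot g:=\tau(t(g))\cdot g$, which is defined since $t(g)\in\tV$ and satisfies $s(\tau\cdot g)=s(g)$. For $g,g'$ in one $s$-fibre, $g'g^{-1}$ is an arrow between two points of $\tV$, so by the isomorphism $\calG|_{\tV}\cong G_V\ltimes\tV$ (from $\tV$ being a translation open set) it equals $\tau(t(g))$ for a unique $\tau\in G_V$, and then $\tau\cdot g=g'$; moreover $\tau\cdot g=g$ forces $\tau(t(g))$ to be a unit, whence $\tau=e$ by the same isomorphism. Hence $G_V$ acts freely on $\calG(U,V)$ and transitively on each $s$-fibre. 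Since the evaluation diffeomorphism restricts, over any fixed $u\in\tU$, to a $G_V$-equivariant bijection between $\mathsf{A}_{VU}$ and the fibre $s^{-1}(u)\cap\calG(U,V)$, these two properties transfer to the action on $\mathsf{A}_{VU}$, which is the second assertion.
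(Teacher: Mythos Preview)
Your proof is correct and follows essentially the same route as the paper: use Lemma~\ref{D:point-open} to exhibit $s\colon\calG(U,V)\to\tU$ as a covering, trivialize it via simple-connectivity of $\tU$, and then read off the evaluation-diffeomorphism and the $G_V$-action properties. The paper compresses all of this into a few lines by asserting that the covering has $G_V$ as its group of deck transformations; you instead verify freeness and transitivity directly from the translation structure $\calG|_{\tV}\cong G_V\ltimes\tV$, which is arguably cleaner since the deck-transformation identification is not spelled out in the paper either.
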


\begin{proof} 
Since $\tU / \calG_1 \subseteq \tV/\calG_1$, the map $s\colon s^{-1}(\tU )\cap t^{-1}(\tV)\to \tU $ is
surjective.  The previous lemma implies that it is a covering with $G_{V}$ as group of deck
transformations.   Since $\tU $ is connected and simply connected, this implies that
$s^{-1}(\tU )\cap t^{-1}(\tV)\cong G_{V}\times \tU $,
a disjoint union of local bisections which are all diffeomorphic to $\tU $. It follows also immediately that the action of $G_{V}$ on $\mathsf{A}_{VU }$ is free and transitive.
\end{proof}

By interchanging the roles of $s$ and $t$ in the previous proof, we obtain the following result.

\begin{cor}\label{D:subcharts2}
Let $\tU $ be a translation neighbourhood which is connected and simply connected and let $\tV$
be a translation neighbourhood such that  $\tU / \calG_1 \subseteq \tV/\calG_1$.  
Then $s^{-1}(\tV)\cap t^{-1}(\tU )$ consists of a disjoint union of local bisections, each of which is
diffeomorphic to $\tU $ via $t$. 
\end{cor}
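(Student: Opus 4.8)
The plan is to derive Corollary~\ref{D:subcharts2} from Lemma~\ref{D:subcharts} by transporting the statement through the inversion map of $\calG$; this is the precise meaning of ``interchanging the roles of $s$ and $t$''.

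First I would invoke the inversion $i\colon\calG_1\to\calG_1$, which is a diffeomorphism with $s\circ i=t$ and $t\circ i=s$. Applying $i$ to the set under consideration gives
\[ i\bigl(s^{-1}(\tV)\cap t^{-1}(\tU)\bigr)=t^{-1}(\tV)\cap s^{-1}(\tU)=s^{-1}(\tU)\cap t^{-1}(\tV)=\calG(U,V). \]
Since $\tU$ is connected and simply connected and $\tU/\calG_1\subseteq\tV/\calG_1$, Lemma~\ref{D:subcharts} applies and tells us that the evaluation map $\mathsf{A}_{VU}\times\tU\to\calG(U,V)$ is a diffeomorphism, that the left $G_V$-action on $\mathsf{A}_{VU}$ is free and transitive, and hence that $\calG(U,V)$ is the disjoint union $\bigsqcup_{\sigma\in\mathsf{A}_{VU}}\sigma(\tU)$ of local bisections, with $s$ restricting on each $\sigma(\tU)$ to the diffeomorphism $\sigma^{-1}\colon\sigma(\tU)\to\tU$ (because $s\circ\sigma=\id_{\tU}$).

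Next I would push this decomposition back through $i$ (which is its own inverse). Since $i$ is injective, $s^{-1}(\tV)\cap t^{-1}(\tU)=\bigsqcup_{\sigma\in\mathsf{A}_{VU}}i(\sigma(\tU))$ is again a disjoint union, carried diffeomorphically onto $\mathsf{A}_{VU}\times\tU\cong G_V\times\tU$. Each piece $i(\sigma(\tU))$ is a local bisection: $\sigma(\tU)$ is one, so $s$ and $t$ both restrict to embeddings on it, and from $s\circ i=t$, $t\circ i=s$ we obtain $s|_{i(\sigma(\tU))}=(t|_{\sigma(\tU)})\circ(i|_{i(\sigma(\tU))})$ and $t|_{i(\sigma(\tU))}=(s|_{\sigma(\tU)})\circ(i|_{i(\sigma(\tU))})$, which are again embeddings. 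In particular $t|_{i(\sigma(\tU))}$ is the composite of the diffeomorphism $i|_{i(\sigma(\tU))}\colon i(\sigma(\tU))\to\sigma(\tU)$ with the diffeomorphism $s|_{\sigma(\tU)}=\sigma^{-1}\colon\sigma(\tU)\to\tU$, hence is a diffeomorphism onto $\tU$. This is exactly the asserted description.

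I do not foresee a genuine obstacle; the only delicate bookkeeping is verifying that $i$ carries local bisections to local bisections and that it is now $t$ (not $s$) that restricts to the diffeomorphism onto $\tU$. A more hands-on alternative would be to rerun the proof of Lemma~\ref{D:subcharts} with $s$ and $t$ exchanged: show $t\colon s^{-1}(\tV)\cap t^{-1}(\tU)\to\tU$ is surjective (every $\calG$-orbit meeting $\tU$ meets $\tV$), observe via the $s\!\leftrightarrow\!t$-symmetric form of Lemma~\ref{D:point-open} that it is a covering whose group of deck transformations is $G_V$, and conclude from the simple connectedness of $\tU$; but the inversion argument is cleaner since it uses Lemma~\ref{D:subcharts} verbatim.
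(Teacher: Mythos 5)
Your argument is correct and is essentially the paper's own proof: the paper simply says the corollary follows "by interchanging the roles of $s$ and $t$ in the previous proof," and your use of the inversion diffeomorphism $i$ (with $s\circ i=t$, $t\circ i=s$) is exactly the precise way to perform that interchange, transporting the conclusion of Lemma~\ref{D:subcharts} to the set $s^{-1}(\tV)\cap t^{-1}(\tU)$.
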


\begin{rmks}\label{bis-rmks} The structure maps of the groupoid allow us to invert surjective local bisections, or invert them on their image when they are not surjective,  and compose them when their image and domain of definition match.  \end{rmks}

\subsection{The Atlas Construction}\label{S:atlascon}
We are now ready to construct an orbifold atlas $\mathfrak{U}$ from each
smooth \'etale groupoid $\calG$ with proper diagonal.

The first thing we need is a collection of orbifold charts as in condition (1) of
Definition~\ref{defin-02}.  We choose a collection $\calU$ of connected and simply connected
translation neighbourhoods  that cover the space of objects $\calG_0$.   These translation neighbourhoods will be denoted  by $ \tU_i$, and their quotients $\tU_i/\calG = U_i$.   Furthermore, we require that
whenever a point $\bar{x}\in \calG_0/\calG_1$ lies in the quotients $U_1$ 
and $U_2$ of two translation neighbourhoods ($\tU_1$ and $\tU_2$ respectively), 
there is a third translation neighbourhood $\tV$ with $\bar{x}\in V$ and  
$V\subseteq {U}_1\cap {U}_2$;  it is possible to do this because the translation
neighbourhoods form a basis for the topology of $\calG_0$ and the spaces are paracompact and Hausdorff. 

For each translation neighbourhood  $\tU$,  we get an {\em atlas chart}
\[(\tU,G_U,\rho_U,\pi_U) \]
where $G_U$ is the set of connected components of $s^{-1}(\tU)\cap t^{-1}(\tU )$, $\rho_U\colon G_U\times\tU\to\tU$ is the action map defined by evaluation and $\pi_U\colon \tU\to\tU/\calG_1 = U$ is the quotient map.

With these charts, the concrete embeddings are given as follows.  For charts $\tU_i$ and $\tU_j$
with $U_i\subseteq U_j$ in the quotient space,
$\CONCR(\mu_{ji})$ is the collection of embeddings obtained by taking composites
\begin{tikzpicture}[baseline, xscale=1.2,yscale=-1.2]
    \node (A0_0) at (0, -0.1) {$\tU_i$};
    \node (A0_1) at (1, -0.1) {$W$};
    \node (A0_2) at (2, -0.1) {$\tU_j$};
    \path (A0_0) edge [->]node [auto] {$\scriptstyle{s^{-1}}$} (A0_1);
    \path (A0_1) edge [right hook->]node [auto] {$\scriptstyle{t}$} (A0_2);
\end{tikzpicture}
for each connected component $W$ of $s^{-1}(\tU_i)\cap t^{-1}(\tU_j)$.
These are effective embeddings since each $W$ is a local bisection.  Note that different
local bisections $W$ may give rise to the same effective embedding in $\CONCR(\mu_{ji})$.

Next, we define the bimodules of embeddings $\ABST$ as in condition (2)
of Definition~\ref{defin-02}.
If  $\tU$ and $\tV$ are translation open subsets with $U\subseteq  V$, define
\[\ABST(\mu_{VU})={\mathsf A}_{VU},\]
with the actions by $G_{\tU}$ and $G_{\tV}$ as defined above.
These actions are free since $\calG$ is a groupoid, and the action by $G_{\tV}$ is transitive as shown in Lemma \ref{D:subcharts}.

For translation open sets $\tU_i, \tU_j, \tU_k$, with $U_i\subseteq U_j\subseteq U_k$,
the bimodule transformation
$\boldalpha_{kji}\colon\mathsf{A}_{\tU_k \tU_j}\otimes_{G_{U_j}}\mathsf{A}_{\tU_j \tU_i}\to\mathsf{A}_{\tU_k \tU_i}$ 
is induced by composition of local bisections: $(\sigma',\sigma)\mapsto\sigma'\sigma$
as noted in Remark  \ref{bis-rmks}.
Also, for any translation open set $\tU$, the transformation $\alpha_U\colon G_{\tU}\to\mathsf{A}_{\tU \tU}$ is the identity,
because of the way we have defined the group.
The usual associativity and unit conditions for an internal category imply associativity
and unit coherence for the $\boldalpha$s.

We define the oplax transformation $\boldrho$ from $\ABST$ to $\CONCR$ as in condition (3)
of Definition~\ref{defin-02} as follows: 
  \[\rho_{ji}:\boldrho_j\otimes_{G_j}{\ABST}(\mu_{ji})\rightarrow
  \CONCR(\mu_{ji})\otimes_{G_i^{\red}} \boldrho_i,  \, \, \rho_j(g_j)\otimes_{G_j} \sigma \to t\circ (g_j\sigma) \otimes_{G^{\red}_i} e_i^{\red}\]
for all $g_j \in G_j$ and $\sigma \in \ABST(\mu_{ji})$, where $g_j \sigma$ is constructed as in  Section \ref{S:tr}.  A straightforward computation shows that this is well-defined (i.e. it does not depend on the representative chosen for $\rho_j(g_j)\otimes_{G_j} \sigma $ in $\boldrho_j\otimes_{G_j}{\ABST}(\mu_{ji})$, that $\rho_{ji}$ commutes with the actions of $G_{U_i}$ and $G_{U_j}$, and that it is surjective.

Finally, we need to check condition  (\ref{D:transitivity}): let us suppose that $\sigma$ and $\tau$ are two objects of 
$\ABST(\mu_{ji})$, such that $\rho_{ji}(e_j^{\operatorname{red}}\otimes\sigma)=
\rho_{ji}(e_j^{\operatorname{red}}\otimes\tau)$. This implies that
the following identity holds in $\CONCR(\mu_{ji})\otimes_{G_i^{\red}} \boldrho_i$:

\[t\circ\sigma\otimes_{G_i^{\operatorname{red}}}e_i^{\operatorname{red}} = 
t\circ\tau\otimes_{G_i^{\operatorname{red}}}e_i^{\operatorname{red}},\]
hence we have $t\circ\sigma=t\circ\tau$. So we can compose (as bisections) $\tau$
with the inverse $\sigma^*$ of $\sigma$; we denote by
$g$ the composition $\sigma^*\tau\in\mathsf{A}_{\widetilde{U}_i,\widetilde{U}_i}=G_{\widetilde{U}_i}$.
Then we have $\sigma g=\tau$, so condition (3b) is satisfied.\\

\begin{rmk}
The atlases described in Example~\ref{E:z3p2} are constructed via this process from
the groupoids described in Example~\ref{E:z3}
by taking the  $U_i$ as translation neighbourhoods.  
\end{rmk}

\section{Equivalences}\label{sec-7}
In this section we will define a notion of equivalence for orbifold atlases.   This definition generalizes
the notion of atlas equivalence for effective orbifolds,  and the constructions described
in Sections 5 and 6 give us a one-to-one correspondence between equivalence classes
of orbifold atlases and Morita equivalence classes of orbifold groupoids.

\subsection{Equivalence of Orbifold Atlases}

Recall that a Satake atlas $\calU$ refines another Satake atlas $\calV$ precisely when
the covering by open subsets induced by $\calU$ refines the covering by open subsets
induced by $\calV$,  and there are Satake embeddings from the charts in $\calU$ to the
charts in $\calV$.   With this definition, two orbifold atlases are equivalent if they
have a common refinement.   For Satake atlases, the fact that $\calU$ is a refinement of
$\calV$ implies that $\calU\cup\calV$ can be made into a larger Satake atlas by adding
sufficiently many smaller charts to satisfy the local compatibility condition.  
This addition of smaller charts can be done in a canonical way since any connected
component of an invariant subset of a chart inherits a chart structure.

For ineffective orbifolds we need to adjust the definition of refinement so that it will
still imply that the union of an atlas with a refinement has a canonical atlas structure.  
In addition to requiring a refinement of coverings as in the effective case, we will also
ask for some compatibility information between charts in the  atlas and charts of its refinements.  
Thus, we will require the existence of certain atlas bimodules connecting charts of the
refinement to those of the atlas, and these need to be suitably compatible with the bimodules
that make up the two atlas structures.   For readers interested in the origin of the
compatibility conditions, the formal framework is as follows (if one just needs
the definition, the concrete data and conditions needed are spelled out
in Definition~\ref{refinement} below):

Let $\mathfrak{U}$ be an atlas on $X$ determined by a pair of pseudofunctors and a
pseudonatural transformation

\[
\begin{tikzpicture}[xscale=1.8,yscale=-1.2]
    \node (A0_0) at (0, 0) {$\calO(\calU)$};
    \node (A0_2) at (2, 0) {$\Groupprof$,};
    \node (A0_1) at (1, 0) {$\Downarrow\,\boldrho_{\mathfrak{U}}$};
    \path (0.3, -0.2) edge [->,bend right=18]node [auto] {$\scriptstyle{\ABST_{\mathfrak{U}}}$} (A0_2);
    \path (0.3, 0.2) edge [->,bend left=18]node [auto,swap] {$\scriptstyle{\CONCR_{\mathfrak{U}}}$} (A0_2);
\end{tikzpicture}
\]
and let $\mathfrak{V}$ be an atlas (on the same space $X$) determined by a pair of pseudofunctors
and a pseudonatural transformation

\[
\begin{tikzpicture}[xscale=1.8,yscale=-1.2]
    \node (A0_0) at (0, 0) {$\calO(\calV)$};
    \node (A0_2) at (2, 0) {$\Groupprof$.};
    \node (A0_1) at (1, 0) {$\Downarrow\,\boldrho_{\mathfrak{V}}$};
    \path (0.3, -0.2) edge [->,bend right=18]node [auto] {$\scriptstyle{\ABST_{\mathfrak{V}}}$} (A0_2);
    \path (0.3, 0.2) edge [->,bend left=18]node [auto,swap] {$\scriptstyle{\CONCR_{\mathfrak{V}}}$} (A0_2);
\end{tikzpicture}
\]
Then a \emph{refinement module} from $\mathfrak{U}$ to $\mathfrak{V}$ is given by the bimodule

\[
\mathsf{R}\colon \calO(\calU)\longarrownot\longrightarrow\calO(\calV)\]
defined by

\[\mathsf{R}(V_j,U_i)=\left\{\begin{array}{ll}
                           \{*\}     & \mbox{ if } U_i\subseteq V_j;\\
                           \varnothing & \mbox{ else},\end{array}\right.
\]
together with the structure to make this both a  module over $\Groupprof$ from $\ABST_{\mathfrak{U}}$
to $\ABST_{\mathfrak{V}}$,  and also a module over $\Groupprof$ from $\CONCR_{\mathfrak{U}}$ to 
$\CONCR_{\mathfrak{V}}$.   Equivalently, we need the data to construct a pseudofunctor  
on the bipartite category $\mbox{Bipart}\,(\mathsf{R})$ (also called the \emph{cograph} of
$\mathsf R$ \cite{St}),
which restricts to $\ABST_{\mathfrak{U}}$ on $\calO(\calU)$ and to $\ABST_{\mathfrak{V}}$ on
$\calO(\calV)$, and similarly the data to construct a pseudofunctor on this same bipartite category that
restricts to $\CONCR_{\mathfrak{U}}$ on $\calO(\calU)$ and to $\CONCR_{\mathfrak{V}}$ on
$\calO(\calV)$.

\begin{dfn}\label{refinement}
Given two orbifold atlases $\mathfrak{U}=(\calU,\ABST_{\mathfrak{U}},\boldrho_{\mathfrak{U}})$
and $\mathfrak{V}=(\calV,\ABST_{\mathfrak{V}},$ $\boldrho_{\mathfrak{V}})$ for the same topological
space $X$, we say that $\mathfrak{U}$ is a \emph{refinement} of $\mathfrak{V}$ if the following
conditions are satisfied:

\begin{enumerate}
 \item The Satake atlas $\cal{U}^{\red}$ is a refinement of the Satake atlas $\cal{V}^{\red}$.

 \item For any chart $\tV$ in $\calV$ and chart $\tU$ in $\calU$ with a point $x\in U\cap V
  \subseteq X$, there is a chart $\tW$ in $\calU$ with $x\in W\subseteq U\cap V$.

 \item\label{D:modules}  Whenever $\tU_i\in\calU$, $\tV_j\in \calV$ and $U_i\subseteq V_j$
   in $\calO(X)$, there is an atlas bimodule  $\mathsf{A}_{ji}=\mathsf{A}(V_j, U_i)\colon G_i
   \longarrownot\longrightarrow G_j$ with a surjective 2-cell $\rho_{ji}$ as in the following diagram:

   \[
   \begin{tikzpicture}[xscale=1.2,yscale=-0.8]
    \node (A0_0) at (0, 0) {$G_i$};
    \node (A0_2) at (2, 0) {$G_j$};
    \node (A1_1) at (1.15, 1) {$\rho_{ji}$};
    \node (A2_0) at (0, 2) {$G_i^{\red}$};
    \node (A2_2) at (2, 2) {$G_j^{\red}$};
    \node (A1_0) [rotate=225] at (0.7, 1) {$\Rightarrow$};

    \path (A0_0) edge [->]node [auto,swap] {$\scriptstyle{\boldrho_i}\,$} (A2_0);
       \slashing{\path (A0_0) -- node [rotate=90] {$\scriptscriptstyle{\slash}$} (A2_0);}{}
    \slashing
       {\path (A2_0.350) -- node [auto,swap] {$\scriptstyle{\mathsf{C}(V_j, U_i)}$} (A2_2.190);
       \path (A2_0.350) edge [->] node [] {$\scriptscriptstyle{\slash}$} (A2_2.190);}
       {\path (A2_0) edge [->]node [auto,swap] {$\scriptstyle{\mathsf{C}(V_j, U_i)}$} (A2_2);}
    \path (A0_2) edge [->]node [auto] {$\,\scriptstyle{\boldrho_j}$} (A2_2);
      \slashing{\path (A0_2) -- node [rotate=90] {$\scriptscriptstyle{\slash}$} (A2_2);}{}
    \slashing
       {\path (A0_0.350) - -node [auto] {$\scriptstyle{\mathsf{A}(V_j, U_i)}$} (A0_2.190);
       \path (A0_0.350) edge [->] node [] {$\scriptscriptstyle{\slash}$} (A0_2.190);}
       {\path (A0_0) edge [->]node [auto] {$\scriptstyle{\mathsf{A}(V_j, U_i)}$} (A0_2);}
   \end{tikzpicture}
   \]
  Here, $\mathsf{C}_{ji}=\mathsf{C}(V_j, U_i)$ denotes  the module of concrete embeddings
   $\lambda\colon\tU_i\hookrightarrow \tV_j$ such that $\pi_j\circ\lambda=\pi_i$, where
   $\pi_i\colon\tU_i\to U_i$ and $\pi_j\colon\tV_j\to V_j$ are the quotient
   maps that are part of the chart structure for $U_i$ and $V_j$ respectively.

   We require that whenever $\rho_{ji}(e_j^{\red}\otimes\lambda)=\rho_{ji}(e_j^{\red}
   \otimes \lambda')$ for $\lambda,\lambda'\in\mathsf{A}_{ji}$, there is an element $g\in G_i$
   such that $\lambda\cdot g=\lambda'$.

 \item  For any $\mu_{ii'}$ in $\calO(\calU)$ with $U_i\subseteq V_j$, there are
   bimodule isomorphisms
   \[\boldalpha_{jii'}:\, \mathsf{A}_{ji}\otimes_{G_{i}}\ABST_{\mathfrak{U}}(\mu_{ii'})
   \Longrightarrow \mathsf{A}_{ji'}\]
   and
   \[\gamma_{jii'}:\, \mathsf{C}_{ji}\otimes_{G_i^{\red}}\CONCR_{\mathfrak{U}}(\mu_{ii'})
   \Longrightarrow\mathsf{C}_{ji'}.\]
 
 \item For any $\mu_{j'j}$ in $\calO(\calV)$ with $U_i\subseteq V_j$, there are bimodule
   isomorphisms
   \[\boldalpha_{j'ji}:\, \ABST_{\mathfrak{V}}(\mu_{j'j})\otimes_{G_{j}}\mathsf{A}_{ji}
   \Longrightarrow \mathsf{A}_{j'i}\]
   and
   \[\gamma_{j'ji}:\, \CONCR_{\mathfrak{V}}(\mu_{j'j})\otimes_{G_{j}^{\red}} \mathsf{C}_{ji}
   \Longrightarrow \mathsf{C}_{j'i}.\]

  \item These isomorphisms need to satisfy certain coherence laws and be compatible
   with the $\rho_{ii'}$s, $\rho_{ji}$s and $\rho_{j'j}$s in the following sense.  

   First, the $\boldalpha_{jii'}$ and $\gamma_{jii'}$ need to be compatible with the $\rho_{ii'}$,
   the $\rho_{ji'}$, and the $\rho_{ji}$ in the sense that the following composites are equal,

   \[
   \begin{tikzpicture}[xscale=2.3,yscale=-1.4]
    \node (A0_1) at (1, 0) {$G_i$};
    \node (A1_1) at (1, 0.6) {$\Downarrow\,\scriptstyle{\boldalpha_{jii'}}$};
    \node (A1_2) at (2, 1) {$G_j$};
    \node (A2_1) [rotate=225] at (0.75, 1.75) {$\Rightarrow$};
    \node (H2_1) at (1, 1.75) {$\scriptstyle{\rho_{ji'}}$};
    \node (A3_0) at (0, 2.5) {$G_{i'}^{\red}$};
    \node (A3_2) at (2, 2.5) {$G_j^{\red}$};
    \node (A1_0) at (0, 1) {$G_{i'}$};
    
    \def \z {-0.8}
    \node (C0_0) at (3+\z/2, 0.5) {$\equiv$};
    
    \node (A0_4) at (4.5+\z, 1) {$\Downarrow\,\scriptstyle{\rho_{ii'}}$};
    \node (A0_5) at (5+\z, 0) {$G_i$};
    \node (A0_6) at (5.5+\z, 1) {$\Downarrow\,\scriptstyle{\rho_{ji}}$};
    \node (A1_4) at (4+\z, 1) {$G_{i'}$};
    \node (A1_6) at (6+\z, 1) {$G_j$};
    \node (A2_5) at (5+\z, 1.5) {$G_i^{\red}$};
    \node (A3_4) at (4+\z, 2.5) {$G_{i'}^{\red}$};
    \node (A3_5) at (5+\z, 2.1) {$\Downarrow\,\scriptstyle{\gamma_{jii'}}$};
    \node (A3_6) at (6+\z, 2.5) {$G_j^{\red}$};
    \node (B0_0) at (4.4+\z, 1.75) {$\scriptstyle{\CONCR_{\mathfrak{U}}(\mu_{ii'})}$};
    \slashing{\path (A3_4) edge [->]node [rotate=25] {$\scriptscriptstyle{\slash}$} (A2_5);}
      {\path (A3_4) edge [->]node [rotate=25] {} (A2_5);}
    \node (B1_1) at (5.6+\z, 1.75) {$\scriptstyle{\mathsf{C}_{ji}}$};
    \slashing{\path (A2_5) edge [->]node [rotate=335] {$\scriptscriptstyle{\slash}$} (A3_6);}
      {\path (A2_5) edge [->]node [rotate=335] {} (A3_6);}
    \path (A0_5) edge [->] node [auto] {$\,\scriptstyle{\boldrho_i}$} (A2_5);
       \slashing{\path (A0_5) -- node [rotate=90] {$\scriptscriptstyle{\slash}$} (A2_5);}{}
    \path (A1_4) edge [->] node [auto,swap] {$\scriptstyle{\boldrho_{i'}}\,$} (A3_4);
       \slashing{\path (A1_4) -- node [rotate=90] {$\scriptscriptstyle{\slash}$} (A3_4);}{}
    \path (A0_1) edge [->] node [auto] {$\scriptstyle{\mathsf{A}_{ji}}$} (A1_2);
       \slashing{\path (A0_1) -- node [rotate=335] {$\scriptscriptstyle{\slash}$} (A1_2);}{}
    \path (A0_5) edge [->] node [auto] {$\scriptstyle{\mathsf{A}_{ji}}$} (A1_6);
       \slashing{\path (A0_5) -- node [rotate=335] {$\scriptscriptstyle{\slash}$} (A1_6);}{}
    \path (A1_0) edge [->] node [auto,swap] {$\scriptstyle{\boldrho_{i'}}\,$} (A3_0);
       \slashing{\path (A1_0) -- node [rotate=90] {$\scriptscriptstyle{\slash}$} (A3_0);}{}
    \path (A1_0) edge [->] node [auto] {$\scriptstyle{\ABST_{\mathfrak{U}}(\mu_{ii'})}$} (A0_1);
       \slashing{\path (A1_0) -- node [rotate=25] {$\scriptscriptstyle{\slash}$} (A0_1);}{}
    \path (A1_4) edge [->] node [auto] {$\scriptstyle{\ABST_{\mathfrak{U}}(\mu_{ii'})}$} (A0_5);
       \slashing{\path (A1_4) -- node [rotate=25] {$\scriptscriptstyle{\slash}$} (A0_5);}{}
    \path (A1_2) edge [->] node [auto] {$\,\scriptstyle{\boldrho_j}$} (A3_2);
       \slashing{\path (A1_2) -- node [rotate=90] {$\scriptscriptstyle{\slash}$} (A3_2);}{}
    \path (A1_6) edge [->] node [auto] {$\,\scriptstyle{\boldrho_j}$} (A3_6);
       \slashing{\path (A1_6) --node [rotate=90] {$\scriptscriptstyle{\slash}$} (A3_6);}{}
    \slashing
      {\path (A3_0.350) -- node [auto,swap] {$\scriptstyle{\mathsf{C}_{ji'}}$} (A3_2.190);
        \path (A3_0) edge [->] node [] {$\scriptscriptstyle{\slash}$} (A3_2);}
      {\path (A3_0) edge [->] node [auto,swap] {$\scriptstyle{\mathsf{C}_{ji}}$} (A3_2);}
    \slashing
       {\path (A3_4.350) -- node [auto,swap] {$\scriptstyle{\mathsf{C}_{ji'}}$} (A3_6.190);
         \path (A3_4) edge [->] node [] {$\scriptscriptstyle{\slash}$} (A3_6);}
       {\path (A3_4) edge [->] node [auto,swap] {$\scriptstyle{\mathsf{C}_{ji'}}$} (A3_6);}
    \slashing
      {\path (A1_0.350) -- node [auto,swap] {$\scriptstyle{\mathsf{A}_{ji'}}$} (A1_2.190);
        \path (A1_0) edge [->] node [] {$\scriptscriptstyle{\slash}$} (A1_2);}
      {\path (A1_0) edge [->] node [auto,swap] {$\scriptstyle{\mathsf{A}_{ji'}}$} (A1_2);}
   \end{tikzpicture}
   \]

   Also, the $\alpha_{j'ji}$ and $\gamma_{j'ji}$ need to be compatible with the $\rho_{ji}$,
   the $\rho_{j'i}$ and the $\rho_{j'j}$  in the sense that the following composites are equal,

   \[
   \begin{tikzpicture}[xscale=2.3,yscale=-1.4]
    \node (A0_1) at (1, 0) {$G_j$};
    \node (A1_1) at (1, 0.6) {$\Downarrow\,\scriptstyle{\boldalpha_{j'ji}}$};
    \node (A1_2) at (2, 1) {$G_{j'}$};
    \node (A2_1) [rotate=225] at (0.75, 1.75) {$\Rightarrow$};
    \node (H2_1) at (1, 1.75) {$\scriptstyle{\rho_{j'i}}$};
    \node (A3_0) at (0, 2.5) {$G_i^{\red}$};
    \node (A3_2) at (2, 2.5) {$G_{j'}^{\red}$};
    \node (A1_0) at (0, 1) {$G_i$};
    
    \def \z {-0.8}
    \node (C0_0) at (3+\z/2, 0.5) {$\equiv$};
   
    \node (A0_4) at (4.5+\z, 1) {$\Downarrow\,\scriptstyle{\rho_{ji}}$};
    \node (A0_5) at (5+\z, 0) {$G_j$};
    \node (A0_6) at (5.5+\z, 1) {$\Downarrow\,\scriptstyle{\rho_{j'j}}$};
    \node (A1_4) at (4+\z, 1) {$G_i$};
    \node (A1_6) at (6+\z, 1) {$G_{j'}$};
    \node (A2_5) at (5+\z, 1.5) {$G_j^{\red}$};
    \node (A3_4) at (4+\z, 2.5) {$G_i^{\red}$};
    \node (A3_5) at (5+\z, 2.1) {$\Downarrow\,\scriptstyle{\gamma_{j'ji}}$};
    \node (A3_6) at (6+\z, 2.5) {$G_{j'}^{\red}$};
    \node (B0_0) at (4.4+\z, 1.75) {$\scriptstyle{\mathsf{C}_{ji}}$};
    \slashing{\path (A3_4) edge [->]node [rotate=25] {$\scriptscriptstyle{\slash}$} (A2_5);}
      {\path (A3_4) edge [->]node [rotate=25] {} (A2_5);}
    \node (B1_1) at (5.6+\z, 1.75) {$\scriptstyle{\CONCR_{\mathfrak{V}}(\mu_{j'j})}$};
    \slashing{\path (A2_5) edge [->]node [rotate=335] {$\scriptscriptstyle{\slash}$} (A3_6);}
      {\path (A2_5) edge [->]node [rotate=335] {} (A3_6);}
    \path (A0_5) edge [->] node [auto] {$\,\scriptstyle{\boldrho_j}$} (A2_5);
       \slashing{\path (A0_5) -- node [rotate=90] {$\scriptscriptstyle{\slash}$} (A2_5);}{}
    \path (A1_4) edge [->] node [auto,swap] {$\scriptstyle{\boldrho_i}\,$} (A3_4);
       \slashing{\path (A1_4) -- node [rotate=90] {$\scriptscriptstyle{\slash}$} (A3_4);}{}
    \path (A0_1) edge [->] node [auto] {$\scriptstyle{\ABST_{\mathfrak{V}}(\mu_{j'j})}$} (A1_2);
       \slashing{\path (A0_1) -- node [rotate=335] {$\scriptscriptstyle{\slash}$} (A1_2);}{}
    \path (A0_5) edge [->] node [auto] {$\scriptstyle{\ABST_{\mathfrak{V}}(\mu_{j'j})}$} (A1_6);
       \slashing{\path (A0_5) -- node [rotate=335] {$\scriptscriptstyle{\slash}$} (A1_6);}{}
    \path (A1_0) edge [->] node [auto,swap] {$\scriptstyle{\boldrho_i}\,$} (A3_0);
       \slashing{\path (A1_0) -- node [rotate=90] {$\scriptscriptstyle{\slash}$} (A3_0);}{}
    \path (A1_0) edge [->] node [auto] {$\scriptstyle{\mathsf{A}_{ji}}$} (A0_1);
       \slashing{\path (A1_0) -- node [rotate=25] {$\scriptscriptstyle{\slash}$} (A0_1);}{}
    \path (A1_4) edge [->] node [auto] {$\scriptstyle{\mathsf{A}_{ji}}$} (A0_5);
       \slashing{\path (A1_4) -- node [rotate=25] {$\scriptscriptstyle{\slash}$} (A0_5);}{}
    \path (A1_2) edge [->] node [auto] {$\,\scriptstyle{\boldrho_{j'}}$} (A3_2);
       \slashing{\path (A1_2) -- node [rotate=90] {$\scriptscriptstyle{\slash}$} (A3_2);}{}
    \path (A1_6) edge [->] node [auto] {$\,\scriptstyle{\boldrho_{j'}}$} (A3_6);
       \slashing{\path (A1_6) --node [rotate=90] {$\scriptscriptstyle{\slash}$} (A3_6);}{}
    \slashing
      {\path (A3_0.350) -- node [auto,swap] {$\scriptstyle{\mathsf{C}_{j'i}}$} (A3_2.190);
        \path (A3_0) edge [->] node [] {$\scriptscriptstyle{\slash}$} (A3_2);}
      {\path (A3_0) edge [->] node [auto,swap] {$\scriptstyle{\mathsf{C}_{j'i}}$} (A3_2);}
    \slashing
       {\path (A3_4.350) -- node [auto,swap] {$\scriptstyle{\mathsf{C}_{j'i}}$} (A3_6.190);
         \path (A3_4) edge [->] node [] {$\scriptscriptstyle{\slash}$} (A3_6);}
       {\path (A3_4) edge [->] node [auto,swap] {$\scriptstyle{\mathsf{C}_{j'i}}$} (A3_6);}
    \slashing
      {\path (A1_0.350) -- node [auto,swap] {$\scriptstyle{\mathsf{A}_{j'i}}$} (A1_2.190);
        \path (A1_0) edge [->] node [] {$\scriptscriptstyle{\slash}$} (A1_2);}
      {\path (A1_0) edge [->] node [auto,swap] {$\scriptstyle{\mathsf{A}_{j'i}}$} (A1_2);}
   \end{tikzpicture}
   \]

   For arrows $\mu_{i'i''},\mu_{ii'}\in\calO(\calU)$ and $\mu_{j'j},\mu_{j''j'}\in\calO(\calV)$
   with $U_i\subseteq V_j$, the following generalized associativity pentagons need to commute:

   \[
   \begin{tikzpicture}[xscale=3,yscale=-1.2, implies/.style={-implies, double, double distance = 1.5}]
    \node (A0_1) at (1, 0) {$\mathsf{A}_{ji}\otimes(\ABST_{\mathfrak{U}}(\mu_{ii'})\otimes\ABST_{\mathfrak{U}}(\mu_{i'i''}))$};
    \node (A1_0) at (0, 1) {$(\mathsf{A}_{ji}\otimes\ABST_{\mathfrak{U}}(\mu_{ii'}))\otimes\ABST_{\mathfrak{U}}(\mu_{i'i''})$};
    \node (A1_2) at (2, 1) {$\mathsf{A}_{ji}\otimes\ABST_{\mathfrak{U}}(\mu_{ii''})$};
    \node (A2_0) at (0, 2) {$\mathsf{A}_{ji'}\otimes\ABST_{\mathfrak{U}}(\mu_{i'i''})$};
    \node (A2_2) at (2, 2) {$\mathsf{A}_{ji''}$};
    
    \node (B0_0) at (1.9, 0.45) {$\scriptstyle{\mathsf{A}_{ji}\otimes\boldalpha_{ii'i''}}$};
    \path (A0_1) edge [implies]node [auto] {} (A1_2);
    \path (A1_2) edge [implies]node [auto] {$\scriptstyle{\boldalpha_{jii''}}$} (A2_2);
    \path (A0_1) edge [double, double equal sign distance]node [auto,swap] {$\scriptstyle{\sim}$} (A1_0);
    \path (A1_0) edge [implies]node [auto,swap] {$\scriptstyle{\boldalpha_{jii'}\otimes\ABST_{\mathfrak{U}}(\mu_{i'i''})}$} (A2_0);    
    \path (A2_0) edge [implies]node [auto,swap] {$\scriptstyle{\boldalpha_{ji'i''}}$} (A2_2);
   \end{tikzpicture}
   \]

   \[
   \begin{tikzpicture}[xscale=3,yscale=-1.2, implies/.style={-implies, double, double distance = 1.5}]
    \node (A0_1) at (1, 0) {$\ABST_{\mathfrak{V}}(\mu_{j'j})\otimes(\mathsf{A}_{ji}\otimes\ABST_{\mathfrak{U}}(\mu_{ii'}))$};
    \node (A1_0) at (0, 1) {$(\ABST_{\mathfrak{V}}(\mu_{j'j})\otimes\mathsf{A}_{ji})\otimes\ABST_{\mathfrak{U}}(\mu_{ii'})$};
    \node (A1_2) at (2, 1) {$\ABST_{\mathfrak{V}}(\mu_{j'j})\otimes\mathsf{A}_{ji'}$};
    \node (A2_0) at (0, 2) {$\mathsf{A}_{j'i}\otimes\ABST_{\mathfrak{U}}(\mu_{ii'})$};
    \node (A2_2) at (2, 2) {$\mathsf{A}_{j'i'}$};
    
    \node (B0_0) at (2, 0.45) {$\scriptstyle{\ABST_{\mathfrak{V}}(\mu_{j'j})\otimes\boldalpha_{jii'}}$};
    \path (A0_1) edge [implies]node [auto] {} (A1_2);
    \path (A1_2) edge [implies]node [auto] {$\scriptstyle{\boldalpha_{j'ji'}}$} (A2_2);
    \path (A0_1) edge [double, double equal sign distance]node [auto,swap] {$\scriptstyle{\sim}$} (A1_0);
    \path (A1_0) edge [implies]node [auto,swap] {$\scriptstyle{\boldalpha_{j'ji}\otimes\ABST_{\mathfrak{U}}(\mu_{ii'})}$} (A2_0);    
    \path (A2_0) edge [implies]node [auto,swap] {$\scriptstyle{\boldalpha_{j'ii'}}$} (A2_2);
   \end{tikzpicture}
   \]

   \[
   \begin{tikzpicture}[xscale=3,yscale=-1.2, implies/.style={-implies, double, double distance = 1.5}]
    \node (A0_1) at (1, 0) {$\ABST_{\mathfrak{V}}(\mu_{j''j'})\otimes(\ABST_{\mathfrak{V}}(\mu_{j'j})\otimes\mathsf{A}_{ji})$};
    \node (A1_0) at (0, 1) {$(\ABST_{\mathfrak{V}}(\mu_{j''j'})\otimes\ABST_{\mathfrak{V}}(\mu_{j'j}))\otimes \mathsf{A}_{ji}$};
    \node (A1_2) at (2, 1) {$\ABST_{\mathfrak{V}}(\mu_{j''j'})\otimes\mathsf{A}_{j'i}$};
    \node (A2_0) at (0, 2) {$\ABST_{\mathfrak{V}}(\mu_{j''j})\otimes\mathsf{A}_{ji}$};
    \node (A2_2) at (2, 2) {$\mathsf{A}_{j''i}$};
    
    \node (B0_0) at (2, 0.45) {$\scriptstyle{\ABST_{\mathfrak{V}}(\mu_{j''j'})\otimes\boldalpha_{j'ji}}$};
    \path (A0_1) edge [implies]node [auto] {} (A1_2);
    \path (A1_2) edge [implies]node [auto] {$\scriptstyle{\boldalpha_{j''j'i}}$} (A2_2);
    \path (A0_1) edge [double, double equal sign distance]node [auto,swap] {$\scriptstyle{\sim}$} (A1_0);
    \path (A1_0) edge [implies]node [auto,swap] {$\scriptstyle{\boldalpha_{j''j'j}\otimes\mathsf{A}_{ji}}$} (A2_0);    
    \path (A2_0) edge [implies]node [auto,swap] {$\scriptstyle{\boldalpha_{j''ji}}$} (A2_2);
   \end{tikzpicture}
   \]

   Whenever $U_i\subseteq V_j$, the following unit coherence diagrams need to commute:

   \[
   \begin{tikzpicture}[xscale=3.4,yscale=-1.2, implies/.style={-implies, double, double distance = 1.5}]
    \node (A0_0) at (0, 0) {$\mathsf{A}_{ji}\otimes\mathsf{G}_i$};
    \node (A0_1) at (1, 0) {$\mathsf{A}_{ji}\otimes\ABST_{\mathfrak{U}}(\mu_{ii})$};
    \node (A1_1) at (1, 1) {$\mathsf{A}_{ji}$};
        
    \path (A0_0) edge [implies] node [auto] {$\scriptstyle{\mathsf{A}_{ji}\otimes\boldalpha_i}$} (A0_1);
    \path (A0_0) edge [double, double equal sign distance]node [auto,swap] {$\scriptstyle{\sim}$} (A1_1);
    \path (A0_1) edge [implies]node [auto] {$\scriptstyle{\boldalpha_{jii}}$} (A1_1);
    
    \def \z {1}
    \node (B0_0) at (1+3*\z/5, 0.5) {$\mbox{and}$}; 
    
    \node (C0_0) at (1+\z, 0) {$\mathsf{G}_j\otimes\mathsf{A}_{ji}$};
    \node (C0_1) at (2+\z, 0) {$\ABST_{\mathfrak{V}}(\mu_{jj})\otimes\mathsf{A}_{ji}$};
    \node (C1_1) at (2+\z, 1) {$\mathsf{A}_{ji}$};
        
    \path (C0_0) edge [implies] node [auto] {$\scriptstyle{\boldalpha_j\otimes\mathsf{A}_{ji}}$} (C0_1);
    \path (C0_0) edge [double, double equal sign distance]node [auto,swap] {$\scriptstyle{\sim}$} (C1_1);
    \path (C0_1) edge [implies]node [auto] {$\scriptstyle{\boldalpha_{jji}}$} (C1_1);
   \end{tikzpicture}
   \]
   where the $\sim$ indicates the canonical isomorphism.

   Analogous associativity and unit coherence conditions apply to the $\gamma_{jii'}$
   and $\gamma_{j'ji}$.
\end{enumerate}
\end{dfn}

With this definition, it is easy to check that refinements still have the following transitive
property.

\begin{lma}
Suppose we have orbifold atlases $\mathfrak{U}$, $\mathfrak{V}$, and $\mathfrak{W}$
for the same topological space $X$, and $\mathfrak{U}$ is a refinement of $\mathfrak{V}$ and
$\mathfrak{V}$ is a refinement of $\mathfrak{W}$.   Then $\mathfrak{U}$ is a refinement of
$\mathfrak{W}$.
\end{lma}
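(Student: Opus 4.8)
First I would unwind Definition~\ref{refinement} and plan to construct the refinement data from $\mathfrak{U}$ to $\mathfrak{W}$ out of the data witnessing $\mathfrak{U}\to\mathfrak{V}$ and $\mathfrak{V}\to\mathfrak{W}$. Conditions (1) and (2) of Definition~\ref{refinement} are the routine part: that $\calU^{\red}$ refines $\calW^{\red}$ as Satake atlases is classical, since coverings refine transitively and a Satake embedding $\tU_i\hookrightarrow\tW_k$ exists whenever $U_i\subseteq W_k$ (the charts being connected and simply connected, by the result of~\cite{MP} recalled in Section~\ref{S:efforb}); and the local compatibility condition~(2) for $\mathfrak{U}\to\mathfrak{W}$ follows by first using that $\calV$ covers $X$ to get a $\calV$-chart through the given point, then applying condition~(2) for $\mathfrak{V}\to\mathfrak{W}$ to shrink it inside $W_k$, and finally condition~(2) for $\mathfrak{U}\to\mathfrak{V}$ to find a $\calU$-chart inside the result and inside $U_i$.

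The real content is the bimodule layer: for each $\tU_i\in\calU$, $\tW_k\in\calW$ with $U_i\subseteq W_k$ I must produce an atlas bimodule $\mathsf{A}^{\mathfrak{UW}}_{ki}$, the surjection $\rho_{ki}$ of Definition~\ref{refinement}(3), the coherence isomorphisms of (4)--(5), and verify the pentagons of (6). When an intermediate chart exists, that is, a $\tV_j\in\calV$ with $U_i\subseteq V_j\subseteq W_k$, the construction is essentially forced: set $\mathsf{A}^{\mathfrak{UW}}_{ki}:=\mathsf{A}^{\mathfrak{VW}}_{kj}\otimes_{G_j}\mathsf{A}^{\mathfrak{UV}}_{ji}$ (a tensor of atlas bimodules is again an atlas bimodule), define $\rho_{ki}$ by pasting the two given squares, and take the $\boldalpha$'s and $\gamma$'s to be the evident composites; the transitivity-on-the-kernel condition propagates through exactly as in the proof of Lemma~\ref{tilde-l}, and the pentagons reduce by a formal diagram chase to the associativity coherences already assumed for $\mathfrak{U},\mathfrak{V},\mathfrak{W}$ and their refinement data. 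Independence of the choice of $\tV_j$, up to a canonical coherent isomorphism, follows from Definition~\ref{refinement}(4)--(5) when two choices are comparable, and in general by passing through a common subchart and invoking Lemmas~\ref{interpolation} and~\ref{strong_compatibility}.

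The delicate case, which I expect to be the main obstacle, is a pair $U_i\subseteq W_k$ admitting \emph{no} intermediate $\calV$-chart. Here I would still choose a $\calV$-chart $\tV_{j_0}\supseteq U_i$ (it exists since $\calU^{\red}$ refines $\calV^{\red}$) and a $\calW$-chart $\tW_{k_0}\supseteq V_{j_0}$, so that $\mathsf{A}^{\mathfrak{VW}}_{k_0 j_0}\otimes_{G_{j_0}}\mathsf{A}^{\mathfrak{UV}}_{j_0 i}$ is an atlas bimodule from $G_i$ to $G_{k_0}$; the key point is that it equips $\operatorname{Ker}(\rho_i)$ with an isomorphism onto $\operatorname{Ker}(\rho_{k_0})$ compatible with the $\rho$'s, and, since $U_i\subseteq W_k\cap W_{k_0}$, the atlas $\mathfrak{W}$ itself — via strong local compatibility (Lemma~\ref{strong_compatibility}) applied to $\tW_k$ and $\tW_{k_0}$ — canonically identifies $\operatorname{Ker}(\rho_{k_0})$ with $\operatorname{Ker}(\rho_k)$ up to conjugation. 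One then takes $\mathsf{A}^{\mathfrak{UW}}_{ki}$ to be the atlas bimodule induced (as in Section~\ref{SS:gphom}) by an injective homomorphism $G_i\to G_k$ realizing this kernel identification and lying over a concrete embedding $\tU_i\hookrightarrow\tW_k$, with $\rho_{ki}$ the unique compatible surjection. The bulk of the remaining work — long but genuinely formal — is to check that none of these choices (intermediate charts, base elements, the common subchart comparing $\tW_k$ and $\tW_{k_0}$) matters up to coherent isomorphism, and that conditions (3)--(6) of Definition~\ref{refinement} hold; this is powered entirely by the coherence already imposed on $\mathfrak{U},\mathfrak{V},\mathfrak{W}$ and their refinement data together with Lemmas~\ref{left-cancel}--\ref{interpolation}. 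Equivalently, one can argue in the bipartite ``cograph'' language of Section~\ref{sec-7}: a refinement is a pair of pseudofunctors on such a category, and $\mathfrak{U}\to\mathfrak{W}$ is obtained by composing the refinement modules $\mathfrak{U}\to\mathfrak{V}$ and $\mathfrak{V}\to\mathfrak{W}$, the only subtlety being to correct the naive composite on precisely those pairs $U_i\subseteq W_k$ not linked through a single $\calV$-chart, which is what the kernel-transport step accomplishes.
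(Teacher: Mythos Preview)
The paper does not give a proof of this lemma; it simply asserts that ``with this definition, it is easy to check'' transitivity, so there is no detailed argument to compare against.  Your outline correctly separates the routine parts (conditions~(1) and~(2), and the case where an intermediate $\tV_j$ with $U_i\subseteq V_j\subseteq W_k$ exists, for which the tensor construction and the pentagon chases are exactly right) from the delicate case where no such intermediate chart is available; you are right that this case is real and that the paper glosses over it.

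Your treatment of the delicate case, however, has a genuine gap.  You propose to take $\mathsf{A}^{\mathfrak{UW}}_{ki}$ to be the bimodule induced by ``an injective homomorphism $G_i\to G_k$ realizing this kernel identification and lying over a concrete embedding $\tU_i\hookrightarrow\tW_k$.''  But those two constraints---agreement on $\operatorname{Ker}(\rho)$ and on $G^{\red}$---do \emph{not} determine the conjugacy class of the homomorphism, hence do not determine the atlas bimodule.  For instance, with $G_i=G_k=\bbZ/4$ and $\rho$ the quotient onto $\bbZ/2$, both automorphisms $1\mapsto 1$ and $1\mapsto 3$ restrict to the identity on the kernel $\{0,2\}$ and induce the identity on $G^{\red}=\bbZ/2$, yet (the group being abelian) they are not conjugate and so yield non-isomorphic atlas bimodules.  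Thus your recipe is ambiguous, and the subsequent claim that ``none of these choices matters'' is not merely formal bookkeeping: one must first make the \emph{correct} choice, and that choice is pinned down not by kernel and reduced data but by the full coherence condition~(4) of Definition~\ref{refinement} applied to subcharts $\tU_{i'}\subseteq\tU_i$ which \emph{do} factor through some $\tV_{j'}\subseteq W_k$ (such $\tU_{i'}$ exist and cover $U_i$, by the very argument you gave for condition~(2)).  The module $\mathsf{A}^{\mathfrak{UW}}_{ki}$ must be constructed so that $\mathsf{A}^{\mathfrak{UW}}_{ki}\otimes_{G_i}\ABST_{\mathfrak{U}}(\mu_{ii'})\cong \mathsf{A}^{\mathfrak{VW}}_{kj'}\otimes_{G_{j'}}\mathsf{A}^{\mathfrak{UV}}_{j'i'}$ for every such $i'$, and both existence and well-definedness of such an extension require an honest argument (using the compatibility of these right-hand sides as $i'$ varies), not a kernel-transport.
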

 
The notion of atlas equivalence now generalizes as follows:

\begin{dfn}\label{D:atlasequiv}
Two orbifold atlases $\mathfrak{V}$ and $\mathfrak{W}$ for the same underlying space $X$ are
\emph{equivalent} if they have a common refinement $\mathfrak{U}$.
\end{dfn}

This relation is clearly symmetric and reflexive.
It is also transitive; this will follow from Theorems~\ref{T:morita-atlas}
and~\ref{T:morita-inverse} proved below (and the fact that being Morita equivalent
is an equivalence relation).

\begin{rmks}\label{refinement-rmks}
\begin{enumerate}
 \item\label{D:rmk1}
  The argument given in Lemma~\ref{tilde-l} applies here showing that each $\rho_{ji}$ in
  part (\ref{D:modules}) of Definition~\ref{refinement} gives rise to a module map
  $\trho_{ji}\colon\mathsf{A}_{ji}\to\mathsf{C}_{ji}$.

  \item\label{strong_compatibility_2}
   It is clear from the set-up of the definition of refinement that the union of an orbifold atlas 
   $\mathfrak{V}$ with a refinement $\mathfrak{U}$ gives rise to a canonical new atlas structure,
   except for the fact that any charts which occur in both atlases will occur twice in the new
   structure.   Because of this, we obtain the following generalization of the strong compatibility
   result for atlas charts (Lemma~\ref{strong_compatibility}):

   {\em For any charts $\tU_1$ in $\calU$ and $\tV_2,\tV_3$ in $\calV$ with $U_1\subseteq V_3$
   and $V_2\subseteq V_3$ and for any points $x_1\in\tU_1$, $y_2\in\tV_2$ and $y_3\in\tV_3$ and abstract
   embeddings $\lambda_{31}\in\mathsf{A}_{31}$ and $\lambda_{32}\in\ABST_{\mathfrak{V}}(\mu_{32})$,
   such that $\trho_{31}(\lambda_{31})(x_1)=y_3$ and $\trho_{32}(\lambda_{32})(y_2)=y_3$,
   there is a chart $\tU_0$ in $\calU$ with a point $x_0\in\tU_0$ and abstract embeddings
   $\kappa_{10}\in\ABST_{\mathfrak{U}}(\mu_{10})$ and $\kappa_{20}\in\mathsf{A}_{20}$,
   such that $\trho_{10}(\kappa_{10})(x_0)=x_1$, $\trho_{20}(\kappa_{20})(x_0)=y_2$
   and $\boldalpha_{310}(\lambda_{31}\otimes\kappa_{10})=
   \boldalpha_{320}(\lambda_{32}\otimes\kappa_{20})$.}
\end{enumerate}
\end{rmks}

\subsection{Equivalence of Smooth Groupoids}

In this section we summarize the description of equivalence of proper \'etale groupoids in terms of
Hilsum-Skandalis maps~\cite{Hae,Pr}.

\begin{dfn}
Let $\calG$ be a topological groupoid.  A \emph{right ${\calG}$-bundle} over a manifold $X$ is
a manifold $M$ with smooth maps

\[
\begin{tikzpicture}[xscale=0.7,yscale=-0.6]
    \node (A0_0) at (0, 0) {$M$};
    \node (A0_2) at (2, 0) {$X$};
    \node (A2_0) at (0, 2) {$\calG_0$};
    \path (A0_0) edge [->]node [auto,swap] {$\scriptstyle{\Gbmap}$} (A2_0);
    \path (A0_0) edge [->]node [auto] {$\scriptstyle{\Hbmap}$} (A0_2);
\end{tikzpicture}
\]
and a right ${\calG}$-action $\mu$ on  $M$, with anchor map $\Gbmap \colon M\rightarrow \calG_0$,
such that  $\Hbmap(mg) = \Hbmap(m)$ ($\calG$ acts on the fibres of $\Hbmap$)
and $\Gbmap(m g)  =s(g)$ for any $m\in M$ and any $g\in \calG_1$  with $\Gbmap(m)=t(g)$.   

Such a bundle $M$ is \emph{principal} if

\begin{enumerate}
 \item $\Hbmap$ is a surjective submersion, and 
 
 \item the map $(pr_1, \mu) \colon M \times_{\calG_0} \calG_1\rightarrow M \times_X M$, sending
  $(m, g)$ to $(m,mg)$, is a diffeomorphism.
\end{enumerate}
\end{dfn}

\begin{dfn}
A {\em Hilsum-Skandalis map} $\calG\nrightarrow \calH$ is represented by a right
$\calG$-bundle $M$ over $\calH_0$, $\calG_0\stackrel{\Gbmap}{\longleftarrow} M
\stackrel{\Hbmap}{\longrightarrow}\calH_0$, which is also a principal left $\calH$-bundle 
over $\Gbmap$, and such that the left and right actions commute.    So we have that
\begin{eqnarray*}
& \Gbmap(hm)=\Gbmap(m)\,\mbox{ and }\,\Hbmap(mg)=\Hbmap(m) & \\
& h(mg)=(hm)g & \\
& \Gbmap(m g)  =s(g)\,\mbox{ and  }\,\Hbmap(h m) = t(h) &
\end{eqnarray*}
for any $m\in M$, $g\in \calG_1$ and $h\in \calH_1$ with $s(h)=\Hbmap(m)$ and $t(g)=\Gbmap(m)$. 

Moreover, since the $\calH$-bundle is principal, $\Hbmap$ is a surjective submersion,
and the map $\calH_1  \times_{\calH_0} M  \rightarrow M \times_{\calG_0} M$ is a diffeomorphism. 
\end{dfn}

\begin{dfn}
A Hilsum-Skandalis map is a \emph{Morita equivalence} when it is both a principal left
$\calG$-bundle and a principal right $\calH$-bundle.
\end{dfn}

Two proper \'etale groupoids are Morita equivalent if there is a Morita equivalence between them.
Note that in particular, if $\calG$ and $\calH$ are Morita equivalent,
we get an induced homeomorphism between their quotient
spaces $\calG_0/\calG_1 \cong \calH_0/\calH_1$.  

\subsection{Morita Equivalence Implies Atlas Equivalence}

The goal of this section is to prove the following:  

\begin{thm}\label{T:morita-atlas}
Let $X$ be a space, and $\calG$ and $\calH$ be proper \'etale groupoids such that both $\calG_0/\calG_1$
and $\calH_0/\calH_1$ are homeomorphic to $X$.   If $\calG$ and $\calH$ are Morita equivalent,
and $\mathfrak{V}$ and $\mathfrak{W}$ are orbifold atlases for $X$ constructed from the
translation neighbourhoods in $\calG$ and $\calH$ respectively, then 
$\mathfrak{V}$ and $\mathfrak{W}$ are equivalent in the sense of Definition~\ref{D:atlasequiv}.
\end{thm}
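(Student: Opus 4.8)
The plan is to present both $\mathfrak{V}$ and $\mathfrak{W}$ as sub-structures of a \emph{single} orbifold atlas built, via the construction of Section~\ref{S:atlascon}, from the \emph{linking groupoid} of the given Morita equivalence, and then to read the refinement data of Definition~\ref{refinement} directly off that common source.

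\textbf{Step 1: the linking groupoid.} Represent the Morita equivalence by a biprincipal bibundle $\calG_0\stackrel{\Gbmap}{\longleftarrow}M\stackrel{\Hbmap}{\longrightarrow}\calH_0$ as in Section~\ref{sec-7}, and form the linking groupoid $\calK$ with $\calK_0=\calG_0\sqcup\calH_0$ and $\calK_1=\calG_1\sqcup M\sqcup M^{\op}\sqcup\calH_1$, composition being given by the two groupoid multiplications together with the left $\calH$- and right $\calG$-actions on $M$. I would first check that $\calK$ is again a proper \'etale groupoid with $\calK_0/\calK_1\cong X$: \'etaleness holds because in a Morita equivalence of \'etale groupoids the anchor maps $\Gbmap,\Hbmap$ are local diffeomorphisms, and properness of the diagonal of $\calK$ follows from properness of the diagonals of $\calG$ and $\calH$ together with principality of the bibundle (which forces the ``mixed'' part $M\to\calG_0\times\calH_0$ to be proper). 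The point of this construction is the following identification: since arrows of $\calK$ between points of $\calG_0$ are exactly arrows of $\calG$ (and likewise for $\calH_0$), a connected simply connected open $\tU\subseteq\calG_0$ is a translation neighbourhood of $\calK$ if and only if it is one of $\calG$, and then the chart $(\tU,G_U,\rho_U,\pi_U)$ and the abstract-embedding bimodules $\mathsf A_{\tW\tU}$ agree whether computed in $\calG$ or in $\calK$; symmetrically for $\calH_0$. Consequently $\mathfrak{V}$ and $\mathfrak{W}$ are \emph{verbatim} sub-structures of the atlas construction of Section~\ref{S:atlascon} applied to $\calK$.

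\textbf{Step 2: a common refining cover, and Step 3: checking the refinement axioms.} Because translation neighbourhoods of $\calK$ form a basis for $\calK_0$ and all spaces are paracompact Hausdorff, I would choose a collection $\calU$ of connected simply connected translation neighbourhoods of $\calK$ that covers $\calK_0$, refines both of the open covers of $X$ induced by $\calV$ and by $\calW$, and satisfies the basis-compatibility condition needed in Section~\ref{S:atlascon}; applying that construction to $\calK$ with this collection yields an orbifold atlas $\mathfrak{U}=(\calU,\ABST_{\mathfrak U},\boldrho_{\mathfrak U})$ on $X$. To see $\mathfrak{U}$ refines $\mathfrak{V}$ I would go down Definition~\ref{refinement}: condition (1) is the cover refinement together with the existence of Satake embeddings between reduced charts with $U_i\subseteq V_j$ (both connected and simply connected), which holds by the results of \cite{Sa2,MP} applied to the proper \'etale groupoid $\calK$; condition (2) is immediate from the basis-compatibility of $\calU$; for condition (\ref{D:modules}) set $\mathsf A_{ji}:=\mathsf A_{\tV_j\tU_i}$ (local bisections of $\calK$ carrying $\tU_i$ into $\tV_j$) and $\mathsf C_{ji}:=\mathsf C(V_j,U_i)$, which is an atlas bimodule by Lemma~\ref{D:subcharts} applied in $\calK$, with $\rho_{ji}$ and its kernel-transitivity property obtained by the same computation already carried out in Section~\ref{S:atlascon}; conditions (4)--(6) hold because every $\boldalpha_{jii'},\gamma_{jii'},\boldalpha_{j'ji},\gamma_{j'ji}$ is induced by composition of local bisections (resp.\ of concrete embeddings) in $\calK$, so associativity and unitality of that composition furnish every pentagon and unit square, and compatibility with the various $\rho$'s is the same ``$\widetilde{(\ )}$ commutes with composition'' argument as in Remark~\ref{D:rmks}(\ref{D:prescomp}). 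The same reasoning shows $\mathfrak U$ refines $\mathfrak W$, so $\mathfrak U$ is a common refinement and $\mathfrak V\sim\mathfrak W$ by Definition~\ref{D:atlasequiv}. (The strong compatibility input needed for the construction of Section~\ref{S:atlascon} and for Remark~\ref{refinement-rmks}(\ref{strong_compatibility_2}) is available since it comes from Lemma~\ref{strong_compatibility} and its refinement version.)

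\textbf{Main obstacle.} The coherence-chasing in Step~3 is essentially bookkeeping with the composition-of-bisections data already used in Section~\ref{S:atlascon}. The real work is Step~1: producing the linking groupoid $\calK$ from the bibundle and verifying that it is again \'etale with a proper diagonal, and---slightly more delicate---checking the identification that translation neighbourhoods of $\calK$ inside $\calG_0$ (resp.\ $\calH_0$) are precisely the translation neighbourhoods of $\calG$ (resp.\ $\calH$), carrying the same chart and abstract-embedding data. Once that identification is in place, $\mathfrak V$ and $\mathfrak W$ sit inside one $\calK$-atlas and the common refinement is obtained simply by refining the cover.
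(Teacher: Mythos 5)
Your proposal is correct in substance, but it reorganizes the argument around a different central object than the paper does. The paper never forms the linking groupoid: it works directly with the bibundle $\calG_0\stackrel{\Gbmap}{\leftarrow}M\stackrel{\Hbmap}{\rightarrow}\calH_0$, proves (Proposition~\ref{prop-final}) that the \emph{invariant local bisections} of $M$ form a basis by pulling both groupoids back to $M$ and using biprincipality to identify $\Hbmap^*\calH\cong\Gbmap^*\calG$, takes the common refinement $\mathfrak U$ to have charts $U_i=\Gbmap(S_i)$ (so $\mathfrak U$ refines $\mathfrak V$ essentially for free, being built inside $\calG$), and then supplies the mixed data for the $\mathfrak W$-side by hand: $\mathsf A^{\mathfrak W}_{ki}$ is the set of components of $\Hbmap^{-1}(\tW_k)\cap\Gbmap^{-1}(U_i)$, $\mathsf C^{\mathfrak W}_{ki}$ consists of the maps $\Hbmap\circ(\Gbmap|_T)^{-1}$, together with Remark~\ref{samegp} to reconcile the structure groups. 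Your linking-groupoid route produces literally the same mixed bimodules (a $\calK$-bisection from $\tU_i\subseteq\calG_0$ into $\tW_k\subseteq\calH_0$ is exactly such a component of $M$), but it buys symmetry: $\mathfrak V$, $\mathfrak W$ and all refinement bimodules sit inside one atlas construction (Section~\ref{S:atlascon}) applied to $\calK$, so the coherence conditions of Definition~\ref{refinement} come uniformly from associativity of composition in $\calK$, and your identification of translation neighbourhoods and bimodules of $\calK$ lying in $\calG_0$ with those of $\calG$ is correct and is the right delicate point to flag. The cost is concentrated exactly where you put it: you must know that $\calK$ is a proper \'etale groupoid. \'Etaleness of the anchors is the same standard fact the paper uses, but properness of $(\Gbmap,\Hbmap)\colon M\to\calG_0\times\calH_0$ is only asserted in your Step 1, and your parenthetical reason is too quick --- principality alone does not give it (the identity bibundle of a non-proper \'etale groupoid is biprincipal); one needs principality \emph{plus} properness of $\calG,\calH$ (finite fibres via the torsor property and finite isotropy, plus a closedness argument in the style of the properness proof in Section 5). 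This verification is the analogue of the work the paper does in Proposition~\ref{prop-final}, which cleverly avoids proving properness of any new groupoid. You could also lighten your own burden by taking the refining charts only inside $\calG_0$ (translation neighbourhoods of $\calG$ already form a basis and their quotients refine both covers), in which case properness of $\calK$ is needed only through the mixed analogues of Lemmas~\ref{D:point-open} and~\ref{D:subcharts}, which follow from properness of $\calG$ and $\calH$ separately.
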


This result implies that any two atlases obtained from the same groupoid (as in
Section~\ref{S:atlascon}) are equivalent as orbifold atlases:

\begin{cor}
If $\calG$ is a proper \'etale groupoid and $\mathfrak{V}$ and $\mathfrak{W}$ are two orbifold atlases
constructed from translation neighbourhoods in $\calG$, then $\mathfrak{V}$ and $\mathfrak{W}$
are equivalent atlases for the quotient space  $\calG_0/\calG_1$.
\end{cor}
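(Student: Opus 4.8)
The plan is to obtain this as an immediate special case of Theorem~\ref{T:morita-atlas}, applied with $\calH=\calG$. So the first step is to record the (standard) fact that every proper \'etale groupoid $\calG$ is Morita equivalent to itself. The witnessing Hilsum--Skandalis map is the canonical unit bibundle $M=\calG_1$, with anchor maps $\Gbmap=t\colon\calG_1\to\calG_0$ and $\Hbmap=s\colon\calG_1\to\calG_0$, and with the left and right $\calG$-actions both given by composition in $\calG_1$. One checks directly that $s$ and $t$ are surjective submersions (they are even local diffeomorphisms, since $\calG$ is \'etale), and that the maps $(\pr_1,\mu)$ appearing in the definition of a principal bundle are diffeomorphisms: on $M\times_{\calG_0}\calG_1\to M\times_X M$ this map is built from the division map of the groupoid and is invertible because $\calG$ is a groupoid. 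Hence $M=\calG_1$ is simultaneously a principal left and a principal right $\calG$-bundle, i.e.\ a Morita self-equivalence of $\calG$.

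The second step is simply to feed this into Theorem~\ref{T:morita-atlas}. Both $\mathfrak{V}$ and $\mathfrak{W}$ are, by hypothesis, orbifold atlases for the quotient space $\calG_0/\calG_1$ constructed from translation neighbourhoods in $\calG$; so the hypotheses of the theorem hold with $X=\calG_0/\calG_1$, with $\calG$ playing the role of both groupoids, and with the induced homeomorphism $\calG_0/\calG_1\cong\calG_0/\calG_1$ taken to be the identity. The theorem then yields that $\mathfrak{V}$ and $\mathfrak{W}$ are equivalent in the sense of Definition~\ref{D:atlasequiv}, i.e.\ admit a common refinement, which is exactly the claim.

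I do not expect any real obstacle here: all the work is in Theorem~\ref{T:morita-atlas}, and the only thing that needs checking is the routine assertion that a groupoid is Morita equivalent to itself via its unit bibundle. The single point worth a sentence of care is that Theorem~\ref{T:morita-atlas} is stated for two proper \'etale groupoids whose quotients are homeomorphic to $X$ and does not implicitly assume they are distinct, so the diagonal case $\calG=\calH$ is genuinely permitted; with that observed, the corollary follows at once.
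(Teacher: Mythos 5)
Your proposal is correct and takes essentially the same route as the paper, which offers no separate argument for this corollary but presents it exactly as you do: as an immediate consequence of Theorem~\ref{T:morita-atlas} applied with $\calH=\calG$, using the trivial fact that $\calG$ is Morita self-equivalent via its unit bibundle $M=\calG_1$ (so the induced homeomorphism of quotient spaces is the identity). One small convention slip, which does not affect the argument: with your choice $\Gbmap=t$, $\Hbmap=s$ the two actions on $\calG_1$ cannot literally be composition (they would be $m\cdot g=g^{-1}m$ and $h\cdot m=mh^{-1}$ under the paper's axioms $\Gbmap(mg)=s(g)$, $\Hbmap(hm)=t(h)$); taking $\Gbmap=s$ and $\Hbmap=t$ makes both actions plain composition, and either way the Morita self-equivalence is standard, so the corollary follows as you say.
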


We will prove Theorem \ref{T:morita-atlas} in several stages.  We begin by fixing notation.   Throughout this
subsection, $\calG$ and $\calH$ will denote proper \'etale groupoids
with a Hilsum-Skandalis Morita equivalence
\[M\colon \calG \nrightarrow \calH\]
with bundle maps
$\calG_0\stackrel{\Gbmap}{\longleftarrow} M\stackrel{\Hbmap}{\longrightarrow}\calH_0$.
Let $X$ be a space that is homeomorphic to  $\calG_0/\calG_1 \cong \calH_0/\calH_1$.   
Furthermore, $\mathfrak{V}$ and $\mathfrak{W}$ will denote induced atlases consisting of
translation neighbourhoods for $X$ in $\calG$ and $\calH$ respectively.    Note that the maps
from the charts in $\mathfrak{V}$ into the underlying space $X$ are obtained as restrictions
of the composition of the projection map $\calG_0 \to \calG_0 /\calG_1$ with the isomorphism
$\calG_0 /\calG_1 \cong X$; similarly for $\mathfrak{W}$.  

Since $\mathfrak{V}$ consists of translation neighbourhoods in $\calG$, we can write
$s^{-1}(\tV)\cap t^{-1}(\tV)\cong G_V\times \tV$ for all $\tV \in \calV$, 
and similarly for $\mathfrak{W}$: 
$s^{-1}(\tW)\cap t^{-1}(\tW)\cong H_W\times \tW$ for all $\tW \in \calW$.    
Note that the structure groups $G_V$ and
$H_W$ are all finite and discrete.   We will prove below that 
for any $m \in M$ we can choose an open neighbourhood $S_m$ containing $m$, 
such that $\Gbmap|_{S_m}\colon S_m\to\Gbmap(S_m)$ and
$\Hbmap|_{S_m}\colon S_m\to\Hbmap(S_m)$  are diffeomorphisms and 
$\Gbmap(S_m)$ and $\Hbmap(S_m)$ are invariant subsets of $\tV$ and $\tW$ respectively.
Explicitly, this means that 
for each $g\in G_V$ either $g(\Gbmap(S_m))=\Gbmap(S_m)$ or
$g(\Gbmap(S_m))\cap\Gbmap(S_m)=\varnothing$, and analogously for each $h\in H_W$ and $\Hbmap(S_m)$.  
We will call any such neighbourhood $S_m$ an \emph{invariant local bisection} of $M$.

Our plan is to show that these invariant local bisections can be used to create an atlas $\mathfrak{U}$
which is a common refinement of $\mathfrak{V}$ and $\mathfrak{W}$.  
Our first proposition shows that there are enough of these invariant local bisections. 

\begin{prop}\label{prop-final}
The invariant local bisections form a basis for the 
topology on the space $\Gbmap^{-1}(\bigcup_{\tV\in\,\calV}\tV)\cap
\Hbmap^{-1}(\bigcup_{\tW\in\calW}\tW)\subseteq M$.
\end{prop}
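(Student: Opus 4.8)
The goal is to show that the invariant local bisections form a basis for the topology on $N := \Gbmap^{-1}(\bigcup_{\tV\in\calV}\tV)\cap\Hbmap^{-1}(\bigcup_{\tW\in\calW}\tW)$. Since being a basis is a local condition, it suffices to show that each point $m\in N$ has arbitrarily small invariant local bisections containing it; that is, for every open $O\ni m$ there is an invariant local bisection $S_m$ with $m\in S_m\subseteq O$. So fix $m\in N$ and an open neighbourhood $O$ of $m$.

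First I would produce an \emph{ordinary} local bisection around $m$ on which $\Gbmap$ and $\Hbmap$ are both diffeomorphisms onto their images. Because $M$ is a principal left $\calH$-bundle over $\Gbmap$ and a principal right $\calG$-bundle over $\Hbmap$, both anchor maps $\Gbmap\colon M\to\calG_0$ and $\Hbmap\colon M\to\calH_0$ are surjective submersions; since $\calG$ and $\calH$ are \'etale, a dimension count (the fibres of $\Gbmap$ are $\calH$-orbits, which are $0$-dimensional, and likewise for $\Hbmap$) shows $M$, $\calG_0$, $\calH_0$ all have the same dimension, so $\Gbmap$ and $\Hbmap$ are in fact local diffeomorphisms. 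Hence I can choose an open $S\ni m$ with $S\subseteq O$, on which $\Gbmap|_S$ and $\Hbmap|_S$ are diffeomorphisms onto open subsets $\Gbmap(S)\subseteq\tV$ and $\Hbmap(S)\subseteq\tW$ for some $\tV\in\calV$ containing $\Gbmap(m)$ and some $\tW\in\calW$ containing $\Hbmap(m)$ (such charts exist because $m\in N$, and after shrinking $S$ we may assume its $\Gbmap$- and $\Hbmap$-images land in single charts, using that the translation neighbourhoods cover $\calG_0$ and $\calH_0$).

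Next I would shrink $S$ further to make $\Gbmap(S)$ invariant under $G_V$ and $\Hbmap(S)$ invariant under $H_W$, in the sense required: for each $g\in G_V$, either $g(\Gbmap(S))=\Gbmap(S)$ or $g(\Gbmap(S))\cap\Gbmap(S)=\varnothing$, and similarly for $H_W$. This is the standard argument for producing invariant neighbourhoods under a finite group acting properly: writing $x_0 = \Gbmap(m)$ and $G_{x_0} = \{g\in G_V : \rho_V(g)x_0 = x_0\}$ for its (finite) isotropy group, for each $g\notin G_{x_0}$ one has $\rho_V(g)x_0\ne x_0$, and by continuity there is a neighbourhood of $x_0$ disjoint from its $\rho_V(g)$-translate; intersecting finitely many such neighbourhoods with $\bigcap_{g\in G_{x_0}}\rho_V(g)(\text{nbhd})$ produces a $G_V$-small neighbourhood $\tN_V$ of $x_0$ inside $\Gbmap(S)$ on which no new identifications occur. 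Do the same on the $\calH$-side to get $\tN_W\subseteq\Hbmap(S)$ around $\Hbmap(m)$. Now replace $S$ by $S' := \Gbmap|_S^{-1}(\tN_V)\cap\Hbmap|_S^{-1}(\tN_W)$; after one more such shrinking to reconcile the two conditions (alternating between the $\Gbmap$- and $\Hbmap$-sides finitely often, which terminates because both groups are finite), we obtain $S_m$ with $m\in S_m\subseteq S\subseteq O$ on which $\Gbmap$ and $\Hbmap$ restrict to diffeomorphisms onto \emph{invariant} subsets of $\tV$ and $\tW$. This is exactly an invariant local bisection contained in $O$, completing the proof.

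\textbf{Main obstacle.} The delicate point is the last shrinking step: imposing $G_V$-invariance on $\Gbmap(S_m)$ can destroy $H_W$-invariance of $\Hbmap(S_m)$ and vice versa, since the two anchor maps need not intertwine the two group actions in any simple way. The resolution is that the maps $\Gbmap$ and $\Hbmap$ are local diffeomorphisms and the groups are finite, so each shrinking is by an \emph{open} condition and only finitely many group elements need to be separated; one must check that alternating the two invariance constructions stabilises after finitely many steps (or, more cleanly, that imposing invariance under the finite isotropy groups $G_{x_0}$ and $H_{y_0}$ simultaneously on a common small neighbourhood pulled back through $S$ suffices, since elements outside the isotropy groups are handled once and for all by the disjointness clause). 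Getting this bookkeeping right — and verifying that invariance is genuinely preserved under the pullback through the bisection $S$ — is where the real care is needed; the dimension-counting and the existence of the initial bisection are routine given the principality of $M$.
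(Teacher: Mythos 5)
The first half of your argument is sound and matches the paper's opening step: since $M$ is bi-principal and $\calG$, $\calH$ are \'etale, both anchor maps $\Gbmap$ and $\Hbmap$ are submersions with discrete fibres, hence \'etale, so every point $m$ has arbitrarily small neighbourhoods on which both restrict to diffeomorphisms, with images inside single charts $\tV\in\calV$, $\tW\in\calW$. The genuine gap is exactly the step you flag and then wave through: making $\Gbmap(S_m)$ and $\Hbmap(S_m)$ invariant \emph{simultaneously}. Your alternating-shrinking scheme is not justified: finiteness of $G_V$ and $H_W$ bounds the work in each step, not the number of steps, and each pass (impose $G_{x_0}$-invariance of the $\Gbmap$-image, then $H_{y_0}$-invariance of the $\Hbmap$-image) can strictly shrink the neighbourhood, so nothing guarantees the process stabilises on an open set. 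The ``cleaner'' variant fares no better: pulling a $G_{x_0}$-invariant neighbourhood of $x_0=\Gbmap(m)$ back through the bisection gives no control over its $\Hbmap$-image, because you have not shown that the transition diffeomorphism $\Hbmap\circ(\Gbmap|_{S})^{-1}$ intertwines the $G_{x_0}$-action with the $H_{y_0}$-action. That intertwining is the real content of the proposition, and it needs the interplay of the two commuting actions, which your sketch never uses beyond principality-for-\'etaleness. Concretely, one first identifies the isotropy groups via bi-principality, $\sigma\colon H_{y_0}\stackrel{\sim}{\rightarrow}G_{x_0}$ determined by $m=h\,m\,\sigma(h)$, shrinks the bisection $T_m$ so that $(h\,T_m\,g)\cap T_m\neq\varnothing$ forces $g=\sigma(h)$, and then checks, using that each groupoid acts along the fibres of the other anchor map, that choosing $\Gbmap(S_m)$ to be $G_{x_0}$-invariant yields $h\,S_m\,\sigma(h)=S_m$ for all $h\in H_{y_0}$, whence $H_{y_0}$-invariance of $\Hbmap(S_m)$ follows; only the elements outside the isotropy groups are handled by the easy disjointness argument you give.

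For comparison, the paper's own proof takes a different, structural route that sidesteps this bookkeeping altogether: it forms the pullback groupoids $\varepsilon^{*}\calH$ and $\tau^{*}\calG$ on $M$, notes they are \'etale and canonically isomorphic by bi-principality, and uses the fact that translation open subsets form a basis for a proper \'etale groupoid, together with the observation that an open $S$ on which $\varepsilon$ (resp.\ $\tau$) restricts to a diffeomorphism is translation open for the pullback groupoid if and only if its image is translation open downstairs. If you prefer your hands-on construction, it can be completed, but you must supply the isotropy-group correspondence and the invariance-transfer argument sketched above; as written, the proof has a hole at its central step.
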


\begin{proof}
First, since $\calG$ is \'etale and $M\stackrel{\varepsilon}{\to}\calH_0$ is a right principal $\calG$-bundle, 
the map $\varepsilon$ is a submersion with discrete fibers, so $\varepsilon$ is \'etale.
Similarly, $\tau\colon M\to \calG_0$ is \'etale (it is a left principal $\calH$-bundle and $\calH$ is \'etale).
Hence, open subsets $S\subseteq M$ for which both the restriction of $\varepsilon$ and the restriction of $\tau$
are diffeomorphisms form a basis for the topology.

Since $\varepsilon$ is a submersion, we may consider the pullback groupoid $$
\varepsilon^*\calH:=\left(M\pullback{\varepsilon}{t}\calH_1\pullback{s}{\varepsilon}M\rightrightarrows M\right)$$
and this is again an \'etale groupoid since $s$, $t$ and $\varepsilon$ are \'etale maps.
Also, the functor
\[
\begin{tikzpicture}[xscale=1.5, yscale= -0.7]
    \node (A0_0) at (0, 0) {$M\,_{\varepsilon}\hspace{-1mm}\times_t\,\mathcal{H}_1\,_s\hspace{-1mm}\times_{\varepsilon}\,M$};
    \node (A2_0) at (0, 2) {$M$};
    \node (A0_2) at (2, 0) {$\mathcal{H}_1$};
    \node (A2_2) at (2, 2) {$\mathcal{H}_0$};
    
    \draw[transform canvas={xshift=0.5ex},->] (A0_0) -- (A2_0);
    \draw[transform canvas={xshift=-0.5ex},->] (A0_0) -- (A2_0);
    \draw[transform canvas={xshift=0.5ex},->] (A0_2) -- (A2_2);
    \draw[transform canvas={xshift=-0.5ex},->] (A0_2) -- (A2_2);
    \path (A0_0) edge [->]node [auto] {$\scriptstyle{\pi}$} (A0_2);
    \path (A2_0) edge [->]node [auto,swap] {$\scriptstyle{\varepsilon}$} (A2_2);
\end{tikzpicture}
\]
is clearly a Morita equivalence. Thus, for any open $S\subseteq M$ such that $\varepsilon|_S$ is a diffeomorphism, with $W=\varepsilon(S)$, the induced map between the restricted groupoids,
\[
\begin{tikzpicture}[xscale=1.5, yscale= -0.7]
    \node (A0_0) at (0, 0) {$S\,_{\varepsilon}\hspace{-1mm}\times_t\,\mathcal{H}_1\,_s\hspace{-1mm}\times_{\varepsilon}\,S$};
    \node (A2_0) at (0, 2) {$S$};
    \node (A0_2) at (2, 0) {${\mathcal{H}_1}|_{W}$};
    \node (A2_2) at (2, 2) {$W$};
    
    \draw[transform canvas={xshift=0.5ex},->] (A0_0) -- (A2_0);
    \draw[transform canvas={xshift=-0.5ex},->] (A0_0) -- (A2_0);
    \draw[transform canvas={xshift=0.5ex},->] (A0_2) -- (A2_2);
    \draw[transform canvas={xshift=-0.5ex},->] (A0_2) -- (A2_2);
    \path (A0_0) edge [->]node [auto] {$\scriptstyle{\pi}$} (A0_2);
    \path (A2_0) edge [->]node [auto,swap] {$\scriptstyle{\varepsilon}$} (A2_2);
\end{tikzpicture}
\]
is an isomorphism of smooth groupoids. Hence, $S$ is a translation open subset for $\varepsilon^*\calH$ if and only if $W$ is a translation open subset for $\calH$.
So the translation open subsets form a basis for the topology of $M$.

By symmetry, similar results hold for $\tau$ and $\calG$. Furthermore, by bi-principality of the Hilsum-Skandalis bundle, there is a canonical isomorphism of Lie groupoids, $\varepsilon^*\calH\cong\tau^*\calG$, given by $(m',h,m)\mapsto (m',g,m)$, where $g$ and $h$ uniquely determine each other through the equation $m'g=hm$. So any open subset $S\subseteq M$ is translation open for $\varepsilon^*\calH$ if and only if it is translation open for $\tau^*\calG$. This gives us the required result.
\end{proof}

Now fix any collection $\{S_i\}$ of invariant local bisections which forms a basis for the topology on
$\Gbmap^{-1}(\bigcup_{\tV\,\in\,\calV}\tV)\cap\Hbmap^{-1}(\bigcup_{\tW\in\,\calW}\tW)\subseteq M$.
Then we define the atlas $\mathfrak{U}$ by taking all the charts $U_i:=\Gbmap(S_i)$.
Note that each $U_i$ is a translation neighbourhood in $\calG$, so we obtain the
remainder of the atlas structure from $\calG$ as described in Section~\ref{S:atlascon}.
In particular, if $S_i$ is constructed around a point $m\in\Gbmap^{-1}(V)\cap\Hbmap^{-1}(W)$
(as in Proposition~\ref{prop-final}), then
$U_i$ gets the structure group $G_{U_i}\le G_V$ consisting of all the
elements $g\in G_V$ such that $g(U_i)=U_i$.

\begin{rmks} \label{samegp} We have arbitrarily chosen to
define our atlas structure in terms of $\calG$;   we would obtain the same structure groups
if we defined it in terms of $\calH$, since 
the subgroup $G_U$ has a free and transitive right action on the
connected  components of this subspace, whereas a corresponding subgroup $H_U$ has a free and transitive
left action on the set of connected components.  These actions commute in the sense that for
any connected component $C$, $h(C g)=(h C) g$, and so the 
groups are isomorphic.    
\end{rmks}

The following will complete the proof of  Theorem~\ref{T:morita-atlas}.  

\begin{prop}
The atlas $\mathfrak{U}$ is a common refinement for $\mathfrak{V}$ and $\mathfrak{W}$.  
\end{prop}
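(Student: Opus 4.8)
The plan is to verify the conditions of Definition~\ref{refinement} for $\mathfrak{U}$ twice: once as a refinement of $\mathfrak{V}$ and once as a refinement of $\mathfrak{W}$. Since $\mathfrak{U}$ was given its atlas structure from $\calG$ exactly as in Section~\ref{S:atlascon}, the first verification is essentially bookkeeping; the second requires transporting the data across the Hilsum--Skandalis bimodule $M$, and that is where the real content lies.

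\emph{Refinement of $\mathfrak{V}$.} Choose each $S_i$ connected and simply connected, so that each chart $\widetilde U_i=\tau(S_i)$ is an invariant, connected, simply connected open subset of the translation neighbourhood $\tV\in\calV$ used to build $S_i$. Then $\calU^{\red}$ refines $\calV^{\red}$, with the inclusions $\widetilde U_i\hookrightarrow\tV$ as concrete embeddings, giving condition~(1); condition~(2) follows from Proposition~\ref{prop-final}, since the $S_i$ form a basis, so a point $\bar x\in U\cap V$ lifts to a point of some $S_k\subseteq\tau^{-1}(U)\cap\tau^{-1}(V)$ and $\tau(S_k)\subseteq U\cap V$ is the required chart. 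For conditions~(3)--(6): whenever $U_i\subseteq V_j$ in $\calO(X)$, both $\widetilde U_i$ and $\tV_j$ are translation neighbourhoods of $\calG$, and Lemma~\ref{D:subcharts} shows that the set $\mathsf A_{\tV_j\widetilde U_i}$ of local bisections of $\calG$ carrying $\widetilde U_i$ into $\tV_j$ is an atlas bimodule $G_i\longarrownot\longrightarrow G_j$; this is precisely what $\ABST$ assigns in Section~\ref{S:atlascon}. The surjective $2$-cell $\rho_{ji}$, the composition isomorphisms $\boldalpha$ and $\gamma$, the transitivity-on-the-kernel condition, and all the coherence pentagons and unit triangles of Definition~\ref{refinement}(6) are induced by composition of local bisections in $\calG$, so they hold by associativity and unitality of that composition --- the very argument that made $\ABST$, $\CONCR$, and $\boldrho$ well defined in Section~\ref{S:atlascon}, together with the strong compatibility statement of Remark~\ref{refinement-rmks}(\ref{strong_compatibility_2}).

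\emph{Refinement of $\mathfrak{W}$.} The charts of $\mathfrak{W}$ are translation neighbourhoods of $\calH$, so here we connect $\calG$-data to $\calH$-data through $M$. For $U_i\subseteq W_k$ in $\calO(X)$, define the mixed bimodule $\mathsf A(W_k,U_i)\colon G_i\longarrownot\longrightarrow H_k$ to be the set of smooth sections $\sigma\colon\widetilde U_i\to M$ of $\tau$ with $\varepsilon\circ\sigma(\widetilde U_i)\subseteq\widetilde W_k$; the commuting left $\calH$- and right $\calG$-actions on $M$ give $\mathsf A(W_k,U_i)$ a left $H_k$-action and a right $G_i$-action, using $\tau(hm)=\tau(m)$ and $\varepsilon(mg)=\varepsilon(m)$. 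Because $\tau$ restricts to a surjection $\tau^{-1}(\widetilde U_i)\cap\varepsilon^{-1}(\widetilde W_k)\to\widetilde U_i$ which, by principality of $M$, is a principal $H_k$-covering of the simply connected $\widetilde U_i$ --- the exact analogue of Lemma~\ref{D:subcharts} --- the left $H_k$-action is free and transitive and the right $G_i$-action is free. The surjection onto the concrete module $\mathsf C(W_k,U_i)$ and the transitivity-on-the-kernel condition are obtained as in Lemma~\ref{tilde-l} and Remark~\ref{refinement-rmks}(\ref{D:rmk1}); the isomorphisms $\boldalpha_{kii'},\gamma_{kii'}$ and $\boldalpha_{k'ki},\gamma_{k'ki}$ come from composing a section of $M$ with a local bisection of $\calG$ on the right, or with a local bisection of $\calH$ on the left; and the associativity pentagons, unit triangles, and $\rho$-compatibility squares of Definition~\ref{refinement}(6) reduce to the bibundle identity $h(mg)=(hm)g$ together with associativity and unitality of the two actions. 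The most economical way to organize this is the observation, already used in the proof of Proposition~\ref{prop-final}, that each $S_i$ is translation open for both $\tau^*\calG$ and $\varepsilon^*\calH$, that $\tau^*\calG\cong\varepsilon^*\calH$ canonically, and hence (Remark~\ref{samegp}) that the structure group $G_{\widetilde U_i}$ computed from $\calG$ agrees with the one computed from $\calH$; this identifies the atlas structure of $\mathfrak{U}$ with the groupoid-derived structure for $\calH$ transported along the $S_i$, so the $\mathfrak{W}$-verification becomes formally the same as the $\mathfrak{V}$-one.

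I expect the main obstacle to be precisely this second verification: setting up the mixed refinement module $\mathsf A(W_k,U_i)$ and its structure $2$-cells so that the coherence diagrams of Definition~\ref{refinement}(6) are manifestly instances of associativity and unitality in $\calG$, in $\calH$, and in the bibundle $M$. Once both verifications are complete, $\mathfrak{U}$ is a common refinement of $\mathfrak{V}$ and $\mathfrak{W}$, so by Definition~\ref{D:atlasequiv} the two atlases are equivalent, which finishes the proof of Theorem~\ref{T:morita-atlas}.
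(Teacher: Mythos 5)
Your proposal is correct and follows essentially the same route as the paper: the refinement data over $\mathfrak{V}$ comes directly from $\calG$ as in the atlas construction of Section~\ref{S:atlascon}, while the mixed bimodules over $\mathfrak{W}$ are built from the bibundle $M$ (your sections of $\tau$ landing in $\varepsilon^{-1}(\tW_k)$ are in natural bijection with the paper's connected components of $\Gbmap^{-1}(U_i)\cap\Hbmap^{-1}(\tW_k)$), with coherence handled via the bibundle identities and the identification $\tau^*\calG\cong\varepsilon^*\calH$ together with Remark~\ref{samegp}, exactly as in the paper's closing observation.
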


\begin{proof}
The atlas bimodules $\mathsf{A}^{\mathfrak{V}}_{ji}\colon G_{U_i}\longarrownot\longrightarrow  G_{V_j}$
and $\mathsf{C}^{\mathfrak{V}}_{ji}\colon G_{U_i}^{\red}\longarrownot\longrightarrow G_{V_j}^{\red}$,  
together with the required module homomorphisms $\rho_{ji}$, $\alpha_{jii'}$ and $\gamma_{jii'}$
can be obtained from the structure of the groupoid
$\calG$ in the same way as the atlas structure of $\mathfrak{U}$ is defined, following the method
of Section~\ref{S:atlascon}.
The required family of coherent isomorphisms is provided by $\calG$ as well:
in fact, all the structure is there to make $\mathfrak{U}\cup \mathfrak{V}$ an orbifold atlas,
since we chose to use $\calG$ in defining $\mathfrak{U}$.  

To make $\mathfrak{U}$ a refinement for $\mathfrak{W}$, we need to define atlas bimodules
$\mathsf{A}^{\mathfrak{W}}_{ki}\colon G_{U_i}\longarrownot\longrightarrow H_{W_k}$
and $\mathsf{C}^{\mathfrak{W}}_{ki}\colon G_{U_i}^{\red}\longarrownot\longrightarrow H_{W_k}^{\red}$
together with $\rho_{ki}$ whenever the map 
$\Gbmap\colon \Hbmap^{-1}(\tW_k)\cap\Gbmap^{-1}(U_i) \to U_i$
is surjective (that is, whenever $U_i$ is a subset of the chart 
$\Gbmap(\Hbmap^{-1}(\tW_k))$ in $\calG$, corresponding to the chart $\tW_k$ of $\calH$).
We define  $\mathsf{A}^{\mathfrak{W}}_{ki}$ as the set of connected components of
$\Hbmap^{-1}(\tW_k)\cap \Gbmap^{-1}(U_i)\subseteq M$;   the group $H_{W_k}$ acts
freely and transitively and $G_{U_i}$ acts freely on this set, so we obtain an atlas bimodule.
The bimodule $\mathsf{C}^{\mathfrak{W}}_{ki}$ on the concrete level is obtained as follows:
for each component $T\subseteq \Hbmap^{-1}(\tW_k)\cap\Gbmap^{-1}(U_i)$,
we define the concrete embedding $\lambda_T$ as the composite

\[
\begin{tikzpicture}[xscale=1.8,yscale=-1.2]
    \node (A0_0) at (0, 0) {$U_i$};
    \node (A0_1) at (1.2, 0) {$T$};
    \node (A0_2) at (2, 0) {$\tW_k$.};
    \path (A0_0) edge [->]node [auto] {$\scriptstyle{(\Gbmap|_{\scriptstyle{T}})^{-1}}$} (A0_1);
    \path (A0_1) edge [->]node [auto] {$\scriptstyle{\Hbmap}$} (A0_2);
\end{tikzpicture}
\]
and we define $\mathsf{C}^{\mathfrak{W}}_{ki}$ as the set of all such $\lambda_T$'s.  
The 2-cell $\rho_{ki}$ is defined by sending $e\otimes T$ to $\lambda_T\otimes e$.

For $\mu_{k'k}$ in $\calO(\calW)$, we define the isomorphism $\boldalpha_{k'ki}\colon
\ABST_{\mathfrak{W}}(\mu_{k'k})\otimes_{H_{W_k}}\mathsf{A}^{\mathfrak{W}}_{ki}\to
\mathsf{A}^{\mathfrak{W}}_{k'i}$
as follows.   Recall that the elements of $\ABST_{\mathfrak{W}}(\mu_{k'k})$ are the connected
components of $s^{-1}(\tW_k)\cap t^{-1}(\tW_{k'})$.
Note that the map $\Hbmap$ restricted to any component $T$ of
$\Hbmap^{-1}(\tW_k)\cap\Gbmap^{-1}(U_i)$ is a diffeomorphism, as is
the source map when restricted to any component of $s^{-1}(\tW_k)\cap t^{-1}(\tW_{k'})$.
So the right action of $\calH$ on $M$ induces a well-defined map
$\ABST_{\mathfrak{W}}(\mu_{k'k})\times\mathsf{A}^{\mathfrak{W}}_{ki}
\to\mathsf{A}^{\mathfrak{W}}_{k'i}$.
Furthermore, this is well-defined with respect to the action of $H_{W_k}$ so that this factors
through the required isomorphism  $\boldalpha_{k'ki}\colon \ABST_{\mathfrak{W}}(\mu_{k'k})
\otimes _{H_{W_k}}\mathsf{A}^{\mathfrak{W}}_{ki}\to\mathsf{A}^{\mathfrak{W}}_{k'i}$.
The $\gamma_{k'ki}$ are defined by composition as usual, and the required diagrams involving the
$\rho_{ki}$ are easily seen to commute.

A straightforward calculation shows that these $\boldalpha_{k'ki}$ also satisfy the other
required coherence conditions.  This can also be seen by the following observation:
the atlas $\mathfrak{U}$ can be viewed as an atlas induced by $\calH$ (rather than by $\calG$ as
constructed above), by viewing each $U_i$ as embedded in $\tW_k$ by the embedding
$\lambda_T$, and considering the structure group to be $H_{U_i}$ via an  isomorphism of this group with $G_{U_i}$ as  in Remark \ref{samegp}.     The abstract and concrete modules
defined by $\calG$ can be translated into abstract and concrete modules defined by $\calH$
through the actions of both groupoids on the invariant local bisections of $M$.
\end{proof}

\subsection{Atlas Equivalence Implies Morita Equivalence}
In this section, we prove the converse of Theorem \ref{T:morita-atlas}.  

\begin{thm}\label{T:morita-inverse}
If $\mathfrak{V}$ and $\mathfrak{W}$ are equivalent atlases for a space $X$, 
then the induced groupoids $\calG(\mathfrak{V})$ and $\calG(\mathfrak{W})$ are Morita equivalent.
\end{thm}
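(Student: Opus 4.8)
The plan is to reduce the statement to a single refinement and then build an explicit essential equivalence of groupoids. Since Morita equivalence of proper \'etale groupoids is symmetric and transitive (as used throughout Section~\ref{sec-7}), and since by Definition~\ref{D:atlasequiv} the atlases $\mathfrak{V}$ and $\mathfrak{W}$ are equivalent precisely when they admit a common refinement $\mathfrak{U}$, it suffices to prove: if $\mathfrak{U}$ is a refinement of $\mathfrak{V}$ in the sense of Definition~\ref{refinement}, then $\calG(\mathfrak{U})$ and $\calG(\mathfrak{V})$ are Morita equivalent. Granting this, we get $\calG(\mathfrak{V})\sim\calG(\mathfrak{U})\sim\calG(\mathfrak{W})$, which is the theorem.

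To treat a refinement, I would first form the union atlas $\mathfrak{U}\cup\mathfrak{V}$. As noted in Remark~\ref{refinement-rmks}(\ref{strong_compatibility_2}), the refinement data package $\ABST_{\mathfrak{U}}$, $\ABST_{\mathfrak{V}}$ and the cross-bimodules $\mathsf{A}_{ji}$ (together with condition~(\ref{D:modules}) of Definition~\ref{refinement} and its coherence pentagons) into a single pseudofunctor on the bipartite category $\mathrm{Bipart}(\mathsf{R})\to\Groupprof$, and likewise on the concrete level using the $\mathsf{C}_{ji}$, so that $\mathrm{Bipart}(\mathsf{R})$ with this data behaves just like $\calO(\calU)$ carrying an atlas structure; the only blemish is that a chart lying in both $\calU$ and $\calV$ appears twice among the objects, which is harmless since the two copies become isomorphic objects of the resulting groupoid. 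Running the construction of Section~\ref{sec-5} over $\mathrm{Bipart}(\mathsf{R})$ in place of $\calO(\calU)$ then yields a smooth category and, via the internal Gabriel--Zisman construction, a groupoid $\calG(\mathfrak{U}\cup\mathfrak{V})$. All the verifications of Sections~\ref{sec-5}--\ref{S:atlastogroupoid} transfer verbatim: the Ore and weak cancellability conditions now rest on Lemma~\ref{strong_comp} and condition~(2) of Definition~\ref{refinement} (to interpolate a $\calU$-chart inside the intersection of a $\calU$-chart with a $\calV$-chart) together with the cancellation Lemmas~\ref{left-cancel}, \ref{right-cancel}, \ref{interpolation}, which are formal consequences of the atlas-bimodule axioms; and the Hausdorffness of the arrow space, the \'etale structure, and properness of the diagonal follow exactly as before. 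Thus $\calG(\mathfrak{U}\cup\mathfrak{V})$ is again a proper \'etale groupoid presenting $X$.

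Next I would show that the inclusions of atlases $\mathfrak{V}\hookrightarrow\mathfrak{U}\cup\mathfrak{V}$ and $\mathfrak{U}\hookrightarrow\mathfrak{U}\cup\mathfrak{V}$ induce smooth functors
\[\calG(\mathfrak{V})\longrightarrow\calG(\mathfrak{U}\cup\mathfrak{V})\longleftarrow\calG(\mathfrak{U})\]
which are essential equivalences, hence part of Morita equivalences (cf.~\cite{Pr}); composing and inverting these gives a Hilsum--Skandalis Morita equivalence $\calG(\mathfrak{U})\nrightarrow\calG(\mathfrak{V})$. On objects and arrows the functors are the evident inclusions of the disjoint unions of charts and of spans, which make sense because $\ABST_{\mathfrak{V}}$ (resp.\ $\ABST_{\mathfrak{U}}$) is literally the restriction of the bipartite pseudofunctor. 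Essential surjectivity of $\calG(\mathfrak{V})\to\calG(\mathfrak{U}\cup\mathfrak{V})$ says that every point of a $\calU$-chart $\tU_i$ is the target of an arrow whose source lies in some $\calV$-chart; this holds because $\calV$ covers $X$, and condition~(2) of Definition~\ref{refinement} then supplies a $\calU$-chart $\tP$ with $P\subseteq U_i\cap V_j$ through which such an arrow can be routed via the bimodule $\mathsf{A}(V_j,P)$, while the relevant comparison map is a surjective submersion because the target map of $\calG(\mathfrak{U}\cup\mathfrak{V})$ is \'etale and the object inclusion is an open embedding. Fully faithfulness is the statement that the square relating $\calG(\mathfrak{V})_1$, $\calG(\mathfrak{U}\cup\mathfrak{V})_1$ and $\calG(\mathfrak{V})_0\times\calG(\mathfrak{V})_0$ is a pullback: an arrow of $\calG(\mathfrak{U}\cup\mathfrak{V})$ between points of $\calV$-charts $\tV_j,\tV_{j'}$ is a span through some chart $\tP$ with $P\subseteq V_j\cap V_{j'}$; if $\tP$ is a $\calU$-chart, one uses local compatibility of $\calV^{\red}$ to pick a $\calV$-chart $\tV_m$ with the base point in $V_m\subseteq V_j\cap V_{j'}$, then condition~(2) to interpolate a common $\calU$-subchart of $\tV_m$ and $\tP$, and finally Lemma~\ref{interpolation} (in its bipartite form) and the coherence isomorphisms of Definition~\ref{refinement} to rewrite the given span as one through $\tV_m$; a second such argument shows the resulting $\calV$-span is unique modulo the equivalence defining $\calG(\mathfrak{V})_1$. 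The symmetric arguments handle $\calG(\mathfrak{U})\to\calG(\mathfrak{U}\cup\mathfrak{V})$.

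Putting the pieces together yields $\calG(\mathfrak{V})\sim\calG(\mathfrak{U}\cup\mathfrak{V})\sim\calG(\mathfrak{U})$, and running the argument on the refinement $\mathfrak{U}$ of $\mathfrak{W}$ gives $\calG(\mathfrak{U})\sim\calG(\mathfrak{W})$; transitivity finishes the proof. The main obstacle is the fully faithfulness step: one must verify that introducing the smaller charts of $\mathfrak{U}$ neither creates new arrows between $\calV$-charts nor identifies distinct ones, and this is exactly where the coherence pentagons and unit diagrams of Definition~\ref{refinement}, together with the interpolation lemma, do the real work; by contrast the essential-surjectivity step and the verification that $\calG(\mathfrak{U}\cup\mathfrak{V})$ is a proper \'etale groupoid are routine repetitions of Sections~\ref{sec-5}--\ref{S:atlastogroupoid}.
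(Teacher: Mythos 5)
Your argument is correct in outline, but it takes a genuinely different route from the paper. The paper never forms $\calG(\mathfrak{U})$ or a union groupoid: it builds a single explicit Hilsum--Skandalis bibundle $M(\mathfrak{U})\colon\calG(\mathfrak{V})\nrightarrow\calG(\mathfrak{W})$ directly from the refinement data on both sides, taking $M(\mathfrak{U})_{jk}$ to be the quotient of $\coprod_i\mathsf{A}^{\mathfrak{V}}_{ji}\times\tU_i\times\mathsf{A}^{\mathfrak{W}}_{ki}$ by the same span-refinement relation used for arrow spaces, with anchor maps given by the $\trho$'s and the two actions defined like composition in atlas groupoids; well-definedness and bi-principality rest on Remark~\ref{refinement-rmks}(\ref{strong_compatibility_2}). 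You instead factor through the groupoid of the union atlas, prove that the two inclusions $\calG(\mathfrak{V})\to\calG(\mathfrak{U}\cup\mathfrak{V})\leftarrow\calG(\mathfrak{U})$ are essential equivalences, and invoke symmetry and transitivity of Morita equivalence; conceptually your bibundle between $\calG(\mathfrak{V})$ and $\calG(\mathfrak{W})$ would be recovered as the space of union-groupoid arrows from $\calV$-objects to $\calW$-objects, so the two proofs are cousins, but the bookkeeping is organized very differently. What the paper's route buys is economy: one construction, no need to re-run Section~\ref{sec-5} over the bipartite category (where the union only satisfies the weaker compatibility condition, since no bimodules are provided for inclusions $V_j\subseteq U_i$ -- the paper does assert in the remark following Definition~\ref{defin-02} and in Remark~\ref{refinement-rmks} that this weaker setting causes no harm, but you are leaning on that assertion), and no full-faithfulness verification. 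What your route buys is reuse of the Section~\ref{sec-5} machinery and a cleaner conceptual picture via essential equivalences; its real cost is exactly where you locate it, in fully faithfulness: one must check both that every union-arrow between $\calV$-points is represented by a $\calV$-span (shrink via condition (2) of Definition~\ref{refinement}, factor the cross-embeddings through a $\calV$-chart by the bipartite analogue of Lemma~\ref{interpolation}, and align the two factorizations using transitivity of the $G_{V_n}$-action) and that distinct $\calV$-arrows are not identified through a $\calU$-chart (same factorization followed by the cancellation Lemmas~\ref{left-cancel} and~\ref{right-cancel}, whose bipartite versions follow from the freeness/transitivity and kernel-transitivity requirements in Definition~\ref{refinement}(\ref{D:modules})). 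These verifications do go through, so your plan is a valid alternative proof, just one whose details are heavier precisely where the paper's direct bibundle construction is lightest.
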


\begin{proof}
Since $\mathfrak{V}$ and $\mathfrak{W}$ are equivalent, they have a common refinement
$\mathfrak{U}$ as in Definition~\ref{refinement}. 
Let $I$, $J$ and $K$ be the index sets for $\calU$, $\calV$ and $\calW$ respectively.
Write $\mathsf{A}^{\mathfrak{V}}_{ji}$ and
$\mathsf{C}^{\mathfrak{V}}_{ji}$ (respectively, $\mathsf{A}^{\mathfrak{W}}_{ji}$ and
$\mathsf{C}^{\mathfrak{W}}_{ji}$) for the atlas bimodules establishing $\mathfrak{U}$ as a
refinement of $\mathfrak{V}$ (respectively, of $\mathfrak{W}$).   We will use these bimodules
to construct a Morita equivalence of groupoids.  This will be given by a Hilsum-Skandalis map
\[M(\mathfrak{U})\colon\calG(\mathfrak{V})\nrightarrow\calG(\mathfrak{W}),\]
with surjective submersions
\begin{tikzpicture}[baseline, xscale=1.8,yscale=-1.2]
    \node (A0_0) at (0, -0.1) {$\calG(\mathfrak{V})_0$};
    \node (A0_1) at (1, -0.1) {$M(\mathfrak{U})$};
    \node (A0_2) at (2, -0.1) {$\calG(\mathfrak{W})_0$,};
    \path (A0_1) edge [->]node [auto,swap] {$\scriptstyle{\Gbmap}$} (A0_0);
    \path (A0_1) edge [->]node [auto] {$\scriptstyle{\Hbmap}$} (A0_2);
\end{tikzpicture}
such that the actions induce diffeomorphisms

\[M(\mathfrak{U})\times_{\calG(\mathfrak{W})_0}\calG(\mathfrak{W})_1\cong
M(\mathfrak{U})\times_{\calG(\mathfrak{V})_0}M(\mathfrak{U})\]
and
\[M(\mathfrak{U})\times_{\calG(\mathfrak{V})_0}\calG(\mathfrak{V})_1\cong
M(\mathfrak{U})\times_{\calG(\mathfrak{W})_0}M(\mathfrak{U}).\]

Note that since the charts $\calV$ of $\mathfrak{V}$ have index set $J$, 
$\calG(\mathfrak{V})_0=\coprod_{j\in J}\tV_j$;   and similarly,
$\calG(\mathfrak{W})_0=\coprod_{k\in K}\tW_k$.

We will construct the space $M(\mathfrak{U})$ by constructing subspaces of the form
\[M(\mathfrak{U})_{jk}=\Gbmap^{-1}(\tV_j)\cap\Hbmap^{-1}(\tW_k),\]
together with the maps 
\[
\begin{tikzpicture}[xscale=1.8,yscale=-1.2]
    \node (A0_0) at (0, 0) {$\tV_j$};
    \node (A0_1) at (1, 0) {$M(\mathfrak{U})_{jk}$};
    \node (A0_2) at (2, 0) {$\tW_k$.};
    \path (A0_1) edge [->]node [auto] {$\scriptstyle{\Hbmap}$} (A0_2);
    \path (A0_1) edge [->]node [auto,swap] {$\scriptstyle{\Gbmap}$} (A0_0);
\end{tikzpicture}
\]

The space $M(\mathfrak{U})_{jk}$ is
constructed as a quotient of the space 
$\coprod_{i\in I}\mathsf{A}_{ji}^{\mathfrak{V}}\times\tU_i\times\mathsf{A}_{ki}^{\mathfrak{W}},$
where we give the modules $\mathsf{A}_{ji}^{\mathfrak{V}}$ and $\mathsf{A}_{ki}^{\mathfrak{W}}$
the discrete topology (they are empty whenever $U_i\nsubseteq V_j$, respectively $U_i\nsubseteq W_k$).
The equivalence relation $\sim$ on $\coprod\mathsf{A}_{ji}^{\mathfrak{V}}\times\tU_i\times
\mathsf{A}_{ki}^{\mathfrak{W}}$ is generated by 

\[(\lambda_{ji},\trho_{ii'}(\nu_{ii'})(x),\lambda_{ki})\sim
(\boldalpha_{jii'}(\lambda_{ji}\otimes\nu_{ii'}),x,\boldalpha_{kii'}(\lambda_{ki}\otimes\nu_{ii'}))\]
for any $\nu_{ii'}\in \ABST_{\mathfrak{U}}(\mu_{ii'})$.   (Note that this equivalence relation
is of the same form as the relation used to define the arrow spaces of the atlas groupoids.) Then,

\[M(\mathfrak{U})_{jk}:=\left(\coprod_{\substack{\tU_i\in\,\calU \\ U_i\subseteq V_j\cap W_k}}
\mathsf{A}_{ji}^{\mathfrak{V}}\times\tU_i\times\mathsf{A}_{ki}^{\mathfrak{W}}\right)/\sim.\]

The map $\Gbmap \colon M(\mathfrak{U})_{jk}\to \tV_j$ is defined by
$[\lambda_{ji},x,\lambda_{ki}]\mapsto\trho_{ji}(\lambda_{ji})(x)$
and the map $\Hbmap \colon M(\mathfrak{U})_{jk}\to \tW_k$ is defined by
$[\lambda_{ji},x,\lambda_{ik}]\mapsto\trho_{ki}(\lambda_{ki})(x)$
(with $\trho_{ji}$ and $\trho_{ki}$ as described in Remark~\ref{refinement-rmks}(\ref{D:rmk1})).  
Since $\mathfrak{U}$ is a refinement, these maps are surjective local diffeomorphisms and
in particular surjective submersions as required.

We define the manifold $M(\mathfrak{U})=\coprod_{j,k} M(\mathfrak{U})_{jk}$.

The right action of $\calG(\mathfrak{V})$ and the left action of $\calG(\mathfrak{W})$
are defined in a way analogous to composition in atlas groupoids.  
Let $g\in \calG(\mathfrak{V})_1$ with $s(g)\in\tV_{j'}$ and $t(g)\in\tV_j$, and let
$(\lambda_{ji},x,\lambda_{ki})$ represent an element of $M(\mathfrak{V})_{jk}$ with 
$\Gbmap([\lambda_{ji},x,\lambda_{ki}])=\trho_{ji}(\lambda_{ji})(x)=t(g)$.
Then $g\in\calG(\mathfrak{V})_1$ is represented by a triple $(\theta_{j'j''},y,\theta_{jj''})$
with $y\in\tV_{j''}$, $\theta_{j'j''}\in\ABST_{\mathfrak{V}}(\mu_{j'j''})$
and $\theta_{jj''}\in\ABST_{\mathfrak{V}}(\mu_{jj''})$.
Now $t(g)=\trho_{jj''}(\theta_{jj''})(y)$, so $\trho_{jj''}(\theta_{jj''})(y)=
\trho_{ji}(\lambda_{ji})(x)$.

By Remark~\ref{refinement-rmks}(\ref{strong_compatibility_2}), this implies that there are a
chart $\tU_{i'}$ in $\calU$, a point $z\in\tU_{i'}$, an abstract embedding
$\nu_{ii'}\in \ABST_{\mathfrak{U}}(\mu_{ii'})$ and an abstract embedding
$\lambda_{j''i'}\in\mathsf{A}^{\mathfrak{V}}_{j''i'}$ such that $\trho_{ii'}(\nu_{ii'})(z)=x$,
$\trho_{j''i'}(\lambda_{j''i'})(z)=y$ and $\boldalpha_{jj''i'}(\theta_{jj''}
\otimes\lambda_{j''i'})=\boldalpha_{jii'}(\lambda_{ji}\otimes \nu_{ii'})$.
Hence, $(\lambda_{ji},x,\lambda_{ki})\sim
(\boldalpha_{jii'}(\lambda_{ji}\otimes\nu_{ii'}),z,\boldalpha_{kii'}(\lambda_{ki}\otimes\nu_{ii'}))$.

Then the right action of $g$ on the class of the point $(\lambda_{ji},x,\lambda_{ki})$ is represented
by $(\lambda_{ki},x,\lambda_{ji})\cdot g:= (\boldalpha_{j'j''i'}(\theta_{j'j''}\otimes\lambda_{j''i'}),
z,\boldalpha_{kii'}(\lambda_{ki}\otimes\nu_{ii'}))$.  
It can be verified that this is independent of the choice of representatives
$(\lambda_{ji},x,\lambda_{ki})$ in $M(\mathfrak{U})_{jk}$ and $(\theta_{j'j''},y,\theta_{jj''})$
in $\calG(\mathfrak{V})_1$.

The left action by $\calG(\mathfrak{W})$ on $M(\mathfrak{U})$ is defined
in a similar (but dual) fashion.

It is a straightforward  calculation to check that this satisfies all the conditions to be a
Morita equivalence.
\end{proof}

So we conclude that the notion of orbifold defined in terms of orbifold atlases and atlas
equivalences as presented in this paper corresponds to the notion of orbifold defined in terms
of proper \'etale groupoids and Morita equivalence.


\end{document}